\documentclass{amsart}
\usepackage{blindtext}
\usepackage[french,english]{babel}
\usepackage{prodint}

\makeatletter
\renewcommand\part{%
   \if@noskipsec \leavevmode \fi
   \par
   \addvspace{4ex}%
   \@afterindentfalse
   \secdef\@part\@spart}

\def\@part[#1]#2{%
    \ifnum \c@secnumdepth >\m@ne
      \refstepcounter{part}%
      \addcontentsline{toc}{part}{\thepart\hspace{1em}#1}%
    \else
      \addcontentsline{toc}{part}{#1}%
    \fi
    {\parindent \z@ \raggedright
     \interlinepenalty \@M
     \normalfont
     \ifnum \c@secnumdepth >\m@ne
    \bfseries \partname\nobreakspace\thepart
       \par\nobreak
     \fi
      \bfseries #1%
     \par}%
    \nobreak
    \vskip 3ex
    \@afterheading}
\def\@spart#1{%
    {\parindent \z@ \raggedright
     \interlinepenalty \@M
     \normalfont
     \huge \bfseries #1\par}%
     \nobreak
     \vskip 3ex
     \@afterheading}
\makeatother

\usepackage{bm}

 \usepackage{upgreek}
 \usepackage{pifont}
   \usepackage{dsfont}

\usepackage{amsopn, amsthm, amsgen, amscd,amsmath, amssymb}
\DeclareMathAlphabet{\mathbbm}{U}{bbm}{m}{n}
\usepackage{color}
\usepackage{tikz}
  \usepackage{graphicx}
\usepackage{epstopdf}
\DeclareGraphicsRule{.tif}{png}{.png}{`convert #1 `basename #1 .tif`.png}
\usepackage[small,nohug,heads = LaTeX]{diagrams} \diagramstyle[labelstyle=\scriptstyle]

\definecolor{CadetBlue}{cmyk}{0.62, 0.57, 0.23, 0 }
\definecolor{black}{cmyk}{1, 0.5, 0, 0 }
\definecolor{RedViolet}{cmyk}{0.07, 0.9, 0, 0.34 }
\definecolor{SeaGreen}{cmyk}{0.69, 0, 0.5, 0}

\DeclareMathAlphabet{\mathpzc}{OT1}{pzc}{m}{it}

\newcommand{\R}{\mathbb R}
\newcommand{\C}{\mathbb C}
\newcommand{\D}{\mathbb D}

\newcommand{\F}{\mathbb F}

\newcommand{\K}{\mathbb K}
\newcommand{\N}{\mathbb N}
\newcommand{\PR}{\mathbb P}
\newcommand{\Q}{\mathbb Q}
\newcommand{\Z}{\mathbb Z}

\newcommand{\T}{\mathbb T}
\newcommand{\I}{\mathbb I}

\newcommand{\SI}{\mathbb S}

\newtheorem{theo}{Theorem}
\numberwithin{theo}{section}
\newtheorem{lemm}{Lemma}
\numberwithin{lemm}{section}
\newtheorem{prop}{Proposition}
\numberwithin{prop}{section}
\newtheorem{coro}{Corollary}
\numberwithin{coro}{section}

\newtheorem*{maintheo}{Main Theorem}

\newtheorem*{conj1}{Main Conjecture 1: Hilbert Class Fields}
\newtheorem*{conj2}{Main Conjecture 2: Ray Class Fields}

\numberwithin{rem}{section}

\theoremstyle{definition}

\numberwithin{defi}{section}

\numberwithin{axio}{section}

\theoremstyle{remark}

\numberwithin{nota}{section}
\newtheorem{exam}{Example}
\numberwithin{exam}{section}

\newtheorem{case}{Case}
\newtheorem{case2}{Case}
\newtheorem{case3}{Case}

\newtheorem{note}{Note}
\numberwithin{note}{section}

\numberwithin{aside}{section}
\newtheorem{rema}{Remark}
\numberwithin{rema}{section}

\title[Quantum Drinfeld Modules]{Quantum Drinfeld Modules and Ray Class Fields of Real Quadratic Global Function Fields}
\author{L. Demangos}
\address{Xi'an Jiaotong - Liverpool University, Department of Mathematical Sciences, Mathematics Building Block B, 111 Ren'ai Road, Suzhou Dushu Lake Science
and Education Innovation District, Suzhou Industrial Park, Suzhou, Peoples Republic of China, 215123}
\email{Luca.Demangos@xjtlu.edu.cn}
\author{T.M. Gendron}
\address{Instituto de Matem\'{a}ticas -- Unidad Cuernavaca, Universidad
Nacional Aut\'{o}noma de M\'{e}xico, Av. Universidad S/N, C.P. 62210
Cuernavaca, Morelos, M\'{e}xico}
\email{tim@matcuer.unam.mx}
\subjclass[2010]{Primary 11R37, 11R80, 11R58, 11F03; Secondary 11K60}
\date{\today}
\keywords{quantum Drinfeld module, ray class field, function field arithmetic}
\begin{document}
\vspace{2cm}

 \maketitle

 \begin{abstract} This is the second in a series of two papers presenting
a solution to Hilbert's 12th problem for real quadratic function fields in positive characteristic, in the sense of proving an analog
of the Theorem of Weber-Fueter.  We also
offer a conjectural treatment of the number field case using quasicrystal counterparts \cite{GLL} of the constructions used in function fields.

  \end{abstract}
  

\section*{Introduction}

In \cite{DGIII}, an explicit description was given of the Hilbert class field $H_{\mathcal{O}_{K}}$ associated to the integral closure $\mathcal{O}_{K}$ of $A=\F_{q}[T]$ 
in a real quadratic extension  $K$ of $k=\F_{q}(T)$.  By real, we mean that $K\subset k_{\infty}$ = the analytic completion of $k$ with respect to the valuation associated to $\infty\in\PR^{1}$. This explicit description of $H_{\mathcal{O}_{K}}$  uses the
values of a multi-valued, modular invariant function
 \[ j^{\rm qt}: k_{\infty}/{\rm GL}_{2}(A)\multimap k_{\infty}\] called the {\it quantum modular invariant}.
  More specifically, for a quadratic unit $f\in K$, it was shown in  that $j^{\rm qt}(f)$ is a finite set.  Then 
 we have the explicit description
\begin{align}\label{HCF3rdPaper} H_{\mathcal{O}_{K}} =  K\left( \prod_{\upalpha\in j^{\rm qt}(f) } \upalpha \right).\end{align}

Now assume that ${\rm char}(\F_{q})$ is odd.  The purpose of this paper is to prove a generation theorem for ray class fields involving a quantum notion of Drinfeld module for quadratic extensions  Thus, together with
(\ref{HCF3rdPaper}), the results of this paper give the analog of the Theorem of Weber-Fueter \cite{Sil}, the latter theorem being the main inspiration for 
 Hilbert's 12th Problem  \cite{Schapp}.

We give now a more detailed description of the main theorem.   Let $f\in\mathcal{O}^{\times}_{K}$ be non-constant having norm $=1$, and consider the order
\[ \mathcal{O}_{f} = \F_{q}[T,f,f^{-1}]  \subset\mathcal{O}_{K}.\]
 For any ideal $\mathfrak{M}\subset\mathcal{O}_{K}$, denote
 $\mathfrak{M}_{f} = \mathfrak{M}\cap  \mathcal{O}_{f}$ and consider the family 
 \[ \mathcal{M}_{f} = \left\{ \mathfrak{M}_{f}\subset \mathcal{O}_{f}\; : \;\; f\equiv 1\mod \mathfrak{M}\subset\mathcal{O}_{K}\right\}.\]
 The (narrow) ray class field $K_{\mathfrak{M}_{f}} $ associated to $\mathfrak{M}_{f}\in  \mathcal{M} $ is called a {\it unit ray class field}.  
 If we denote by $K^{\rm ab}_{\infty_{1}, \infty_{2}}$ the maximal abelian extension of $K$ split over the
 places $\infty_{1},\infty_{2}$ of $K$ lying over $\infty$, then (see Theorem \ref{cofinalitytheo} of \S  \ref{RayClassFieldSection})
 \[ K^{\rm ab}_{\infty_{1}, \infty_{2}} = \bigcup_{\mathfrak{M}_{f}\in \mathcal{M}} K_{\mathfrak{M}_{f}}.\]

The explicit description of the $K_{\mathfrak{M}_{f}} $ is given by the  {\it quantum exponential
function}
\[ \exp  ^{\rm qt} 
:\C_{\infty}\multimap \C_{\infty}\]
associated to $f$ 
(here $\C_{\infty}$ is the analog of the complex numbers in function field arithmetic).  
It naturally leads to an associated {\it quantum Drinfeld module}, denoted $\uprho^{\rm qt}  $, and for which the corresponding notion of modular 
invariant is given by $j^{\rm qt}(f)$.  A point of a quantum Drinfeld module is multi-valued, of the shape 
$ z^{\rm qt}=\{ z_{0},\dots ,z_{d-1}\}$, where $d-1$ is the genus of the curve associated to $K$.  See \S \ref{QDMSection}.  

To generate $K_{\mathfrak{M}_{f}}$, we use the group  
$  \uprho^{\rm qt}  [\mathfrak{M}_{f}]
$ of {\it quantum
$\mathfrak{M}_{f}$ torsion points} of $\uprho^{\rm qt}  $,
defined at the end of  \S  \ref{RayClassFieldSection}, c.f.\ (\ref{QTDef}).
 Then, if 
we denote by ${\sf Tr}(z^{\rm qt}) = \sum z_{i}$  the sum of the elements of $z^{\rm qt}$, we have 
\begin{maintheo}  Let ${\rm id}\not=  \varphi \in {\rm Gal}(K/\F_{q}(T))$, lifted to $K_{\mathfrak{M}_{f}}$.  Then
\[ K_{\mathfrak{M}_{f}}=H_{\mathcal{O}_{f}} \big( {\sf Tr}(\uprho^{\rm qt}  [\mathfrak{M}_{f}] ),    {\sf Tr}(\uprho^{\rm qt}  [\mathfrak{M}_{f}])^{  \varphi} 
\big)
.\]
\end{maintheo}
The proof of this result is spread over \S\S \ref{ellipticsection}, \ref{genus2section} and \ref{generalcase}; see Theorems \ref{ellipticrayclasstheorem}, \ref{genus2rayclasstheorem} and \ref{generalrayclasstheorem}
therein.   
In \S \ref{CharZeroConj}, we present a conjectural adaptation of the results of this paper and \cite{DGIII} to the number field case. 
This paper relies heavily on \cite{DGVI}, which presents Hayes Theory for orders; this material was originally included as a set of appendices
in the arXiv version of the present paper \cite{DGArXiv}.
\vspace{5mm}

\noindent {\bf Acknowledgements:}  
We have benefited
from conversations with Ernst-Ulrich Gekeler,   Ignazio Longhi, Federico Pellarin and Richard Pink.  We would also like to thank the referees for many useful suggestions.

 \section{Quantum Drinfeld Modules}\label{QDMSection}

In this section we introduce the multi-valued notions of quantum exponential and its associated quantum Drinfeld module as renormalized limits of objects
defined using diophantine approximations, in the spirit of the definition of the quantum modular invariant \cite{DGIII}.  
The underlying philosophy -- diophantine approximations give rise to quantum analogs of classical objects in number theory --  serves as a useful guide to the development  and understanding of the theory: in particular, it explains why the quantum objects proposed here are multi-valued.

We recall basic notation established in \cite{DGIII}: $\F_{q}$ is the field with $q=p^{n}$ elements, $p$ a prime, $k=\F_{q}(T)$, $A=\F_{q}[T]$.  Throughout the paper we will also assume that $p\not=2$.
    We recall that $k$ is the function field of $\PR^{1}$; we denote by $k_{\infty}$ the completion of $k$ with respect to the place $\infty\in\PR^{1}$ and
    by $\C_{\infty}$ the completion of the algebraic closure $\overline{k_{\infty}}$.
  
  Let $f\in k_{\infty}$ be a quadratic unit whose minimal polynomial has the shape 
    \begin{align}\label{minpolyoff} X^{2}-{\tt a}X-{\tt b}, \end{align}
    where ${\tt a}\in A$ is assumed to be a {\it monic} polynomial in $T$ of degree $d$ and ${\tt b}\in\F_{q}^{\times}$.  Note that for all $n\in\N$, $f^{n}$ also satisfies
    a minimal polynomial with linear term ${\tt a}_{n}$  monic.  Indeed, the monicity condition on ${\tt a}$ implies that the coefficient of the first non zero term of the Laurent series expansion
    of $f$ in $T^{-1}$ is 1, which implies the same for $f^{n}$, which, in turn, implies that ${\tt a}_{n}$ is monic.

    We denote by $K=k(f)$ the associated quadratic extension and
by $\mathcal{O}_{K}$ the integral closure of $A$ in $K$.   
 The extension $K$ is real in the sense that it is contained in $k_{\infty}$, or equivalently -- if we denote by $\Upsigma_{K}\rightarrow\PR^{1}$ the 
 degree $2$ morphism of curves inducing $K/k$ -- 
  there are two places $\infty_{1},\infty_{2}\in\Upsigma_{K}$ lying over $\infty\in\PR^{1}$.  We denote
    by $A_{\infty_{1}}$, $A_{\infty_{2}}\subset \mathcal{O}_{K}$ the Dedekind domains of functions regular outside of $\infty_{1},\infty_{2}$.

Fix a {\it fundamental} unit  $f_{0}$, also assumed to satisfy a polynomial of the shape (\ref{minpolyoff}), where its coefficients are denoted ${\tt a}_{0}, {\tt b}_{0}$ and
${\tt a}_{0}$ is monic of degree $d_{0}$.   
  We have the explicit
    descriptions \cite{DGIII}:
    \begin{align}\label{expdesc}  A_{\infty_{1}}= \F_{q}[f_{0}, f_{0}T,\dots ,f_{0} T^{d_{0}-1}],\quad A_{\infty_{2}}= \F_{q}[f_{0}^{-1}, f_{0}^{-1}T,\dots ,f_{0}^{-1}T^{d_{0}-1 }].  \end{align}
    As in \cite{DGIII} we choose the place $\infty_{1}$ and work with $A_{\infty_{1}}$;
    the discussion for the choice of place $\infty_{2}$ is identical.  For a general unit of the form
$  f:=f_{0}^{n}$, 
the ring
   \begin{align}\label{forder} A_{f}:=\F_{q}[f, fT,\dots ,fT^{d-1}]\subset A_{\infty_{1}}, \quad d:=nd_{0}, \end{align}
  is an order in $A_{\infty_{1}}=A_{f_{0}} $.
    See \S 1 of \cite{DGVI} for facts regarding
      orders in function fields.  
 

Consider the ideals 
\begin{align}\label{aidef} \mathfrak{a}_{i} =\mathfrak{a}_{f,i} =(f, fT,\dots ,fT^{i})\subset A_{f}, \quad i=0,1,\dots ,d-1 .\end{align}

\begin{prop}\label{PowerProp} For all $i$, $\mathfrak{a}_{i}= \mathfrak{a}_{d-1}^{d-i}$.
      \end{prop}
      
 \begin{proof} See Proposition 5.1 of \cite{DGVI}.\end{proof}




In \cite{DGIII}, the modular invariant $j^{\rm qt}(f)$ of the quantum torus
$ \T_{f} =\C_{\infty}/ \langle 1, f \rangle$
 was introduced as a multi-valued and discontinuous function of $k_{\infty}$. Its multi-valued
and discontinuous character arose through its definition as a limit of functions $j_{\upvarepsilon}$ defined using the set $ \Uplambda_{\upvarepsilon}(f)=\{\uplambda \in A,\| \uplambda f\|<\upvarepsilon\}$
of $\upvarepsilon$ diophantine approximations of $f$;
 here $\| x\|$ = the distance of $x$ to the nearest element of $A$.
It was then proved, through a renormalization procedure, that  \[ j^{\rm qt}(f) = \left\{  j(\mathfrak{a}_{i})\right\}, \quad i=0,\dots ,d-1,\]
where $ j(\mathfrak{a}_{i})$ is the modular invariant of an ideal $\mathfrak{a}_{i}\subset A_{\infty_{1}}$.  In what follows, it is shown that a similar procedure leads to a quantum notion of
exponential
   \[ \exp^{\rm qt}  :\C_{\infty}\multimap \C_{\infty},\] whose multivalues will  generate ray class fields over the Hilbert class field $H_{\mathcal{O}_{K}}$.
   

 Define 
${\tt Q}_{n}\in A$ by 
$ {\tt Q}_{0}=1, {\tt Q}_{1}={\tt a},\dots , {\tt Q}_{n+1}={\tt a}{\tt Q}_{n} +{\tt b}{\tt Q}_{n-1}$,
${\tt a},{\tt b}$ are as in (\ref{minpolyoff}).  
If $f^{\ast}$ denotes the Galois conjugate of $f$, 
we may assume $|f|>|f^{\ast}|$, and then
 $|f|=|{\tt a}|=q^{d}$ and $|f^{\ast}|=q^{-d}$.  Let ${\tt D}={\tt a}^{2}+4{\tt b}$ be the discriminant.
Using Binet's formula
\begin{align}\label{Binetform}{\tt Q}_{n} =\frac{ f^{n+1}-(f^{\ast})^{n+1}}{\sqrt{{\tt D}}},\quad n=0,1,\dots ,\end{align}
one may show that
$\| {\tt Q}_{n}f\| =  q^{ -(n+1) d } $,
from which it follows that the set \begin{align*}
\mathcal{B}=\{ T^{d-1}{\tt Q}_{0}, \dots , T{\tt Q}_{0}, {\tt Q}_{0} ; T^{d-1}{\tt Q}_{1}, \dots , T{\tt Q}_{1}, {\tt Q}_{1};\dots  \}\end{align*}
forms an $\F_{q}$ basis of $A$.
Write 
$\mathcal{B}(i) = \{ T^{d-1}{\tt Q}_{i},\dots , {\tt Q}_{i}\}$
 for the $i$th block of $\mathcal{B}$ and for $0\leq \tilde{d}\leq d-1$, denote
$ \mathcal{B}(i)_{\tilde{d}} = \{T^{\tilde{d}}{\tt Q}_{i},\dots , {\tt Q}_{i}\}$.
Then (see \cite{DGIII}, Lemma 1) 
\begin{align}\label{explicitformoflambda} 
 \Uplambda_{q^{-Nd-l}}(f )={\rm span}_{\F_{q}} ( \mathcal{B}(N)_{d-1-l},\mathcal{B}(N+1),\dots   ) .
\end{align}

   The $\boldsymbol\upvarepsilon$ {\bf {\em exponential function}} is the additive function
   \[ \exp_{\Uplambda_{\upvarepsilon}(f)}(z) = z\prod_{0\not=\uplambda\in\Uplambda_{\upvarepsilon}(f)} \left(1- \frac{z}{\uplambda}\right). \]
Unlike the functions $j_{\upvarepsilon}$,  the $\upvarepsilon$ exponential functions have trivial limit:   $ \lim_{\upvarepsilon\rightarrow 0}\exp_{\Uplambda_{\upvarepsilon}(f)}(z)$ $=z $,
   since, by (\ref{explicitformoflambda}), $\bigcap \Uplambda_{\upvarepsilon}(f)=0$ and 
$\lim_{\upvarepsilon\rightarrow 0}\inf \{ |\uplambda| \,  |\;\uplambda\in \Uplambda_{\upvarepsilon}(f)-0\}  \rightarrow\infty $.
      On the other hand, there is a natural normalization of  $\Uplambda_{\upvarepsilon}(f)$ by a transcendental factor $\upxi_{\upvarepsilon}\in \C_{\infty}$,
      defined below in (\ref{formulaxi}), so that if we denote 
     $ \breve{\Uplambda}_{\upvarepsilon}(f):=  \upxi_{\upvarepsilon} \Uplambda_{\upvarepsilon}(f) $ and \[ e_{\upvarepsilon}(z):=
      \exp_{\breve{\Uplambda}_{\upvarepsilon}(f)}(z) = z\prod_{0\not=\uplambda\in\breve{\Uplambda}_{\upvarepsilon}(f)} \left(1- \frac{z}{\uplambda}\right)  = \upxi_{\upvarepsilon}\exp_{\Uplambda_{\upvarepsilon}(f)}( \upxi_{\upvarepsilon}^{-1} z) ,
      \]
      then the $\upvarepsilon\rightarrow 0$ limit of the $e_{\upvarepsilon}(z)$ 
      will produce a nontrivial multi-valued function.
      The period $\upxi_{\upvarepsilon}$ is defined according to the analog of a classical procedure, which may be found, for example, in the statement of Theorem 7.10.10 of \cite{Goss}. Before proceeding further, we will need to 
      fix a  uniformizer $\uppi\in k_{\infty}$.   Since $f$ is of degree $d$, we may write  $f/T^{d}=1+u$,  whence $   f^{-1/d} =T\sum_{k\geq 0}\binom{1/d}{k}  u^{k} \in k_{\infty}$.
    Define
     \begin{align}\label{defnuniform}  \uppi := \left\{ \begin{array}{ll}
     f^{-1/d}   &\text{if $(d,p)=1$} \\
     (fT)^{-1/(d+1)} & \text{otherwise}.
     \end{array}\right. \end{align}
     Note that in case $(d,p)\not=1$, $(d+1,p)=1$ and so the root $ (fT)^{1/(d+1)}$ also exists in $k_{\infty}$.
     In either case, $|\uppi |=q^{-1}$, so $\uppi$ is a uniformizer for $k_{\infty}$.

     Now following the statement of Theorem 7.10.10 of \cite{Goss},
     the transcendental element $\upxi_{\upvarepsilon}\in \C_{\infty}$ is defined (up to multiplication by $(q-1)$th roots of unity) by
     \begin{align}\label{formulaxi} \upxi_{\upvarepsilon}^{q-1} :=\upeta_{\upvarepsilon}^{-1}\times \uppi^{-t_{\upvarepsilon}}\times u^{q-1}_{\upvarepsilon},\end{align}
  where the three factors on the right hand side above  are as follows. 
  
     \vspace{3mm}
  
 \noindent {\bf 1.}  Fix the sign homomorphism ${\rm sgn}:k_{\infty}^{\times}\rightarrow \F_{q}^{\times}$
 given by ${\rm sgn}(x)=$ coefficient of the leading term in the Laurent series expansion of $x$ in $T^{-1}$.
Then (\cite{Goss},  page 222)
   \[ \upeta_{\upvarepsilon}  :=\lim_{M\rightarrow\infty}\prod_{\substack{ 0\not=\uplambda\in \Uplambda_{\upvarepsilon}(f), \\ \deg_{T}(\uplambda)= M}} {\rm sgn}(\uplambda) .\]
   
   \vspace{3mm}
 
 \noindent {\bf 2.} 
Let
 \[ Z_{\upvarepsilon}(u) := \sum_{0\not=\uplambda\in\Uplambda_{\upvarepsilon}(f)} u^{\deg_{T} (\uplambda )} \]
 where $u=q^{-s}$ and $s\in\C$.
Then the exponent of $\uppi$ is defined  \[ t_{\upvarepsilon}:= (q-1)Z'_{\upvarepsilon}(1).\] 
 
 \vspace{3mm}
 
\noindent {\bf 3.}  Finally, we define
\[ u_{\upvarepsilon} :=\lim_{M\rightarrow\infty}\prod_{\substack{0\not =\uplambda\in \Uplambda_{\upvarepsilon}(f), \\ \deg_{T}(\uplambda)\leq M}} \langle\uplambda\rangle,
\quad  \langle\uplambda \rangle := \frac{\uplambda\cdot\uppi^{\deg_{T} (\uplambda) }}{{\rm sgn}(\uplambda)} = \text{ $1$-unit part of $\uplambda$.}\]

\vspace{3mm}


Recall the ideals $\mathfrak{a}_{i} $  introduced
 in (\ref{aidef}).
When  $f=f_{0}=$ a fundamental unit,  they form \cite{DGIII} 
a cyclic subgroup $ Z\subset {\sf Cl}_{A_{\infty_{1}}} =$ the ideal class group of $A_{\infty_{1}}$.  We may now also define a period $\upxi_{i}$ for each $\mathfrak{a}_{i}$ as is done in \cite{Goss}, \S 7.10.  Denote 
by \[ e_{i}(z) = \text{ the exponential function with lattice $\Uplambda_{i}:=\upxi_{i}\mathfrak{a}_{i}\subset \C_{\infty}$},\]  which
 is the exponential Hayes module $\uprho_{i}$ with lattice $\Uplambda_{i}$.  See \cite{Hayes}, \cite{Goss}, \cite{Thak}.

When $f$ is a general unit, the statements in the previous paragraph remain true.
In particular, the ideals $\mathfrak{a}_{i}$ now form a cyclic subgroup of the ideal class group of the order, ${\sf Cl}_{A_{f}}$, see Proposition 5.1 of \cite{DGVI}.  The normalized exponentials $e_{i}(z)$ are now associated to {\it order Hayes
modules} associated to $A_{f}$, \cite{Hayes0}.
The {\bf {\em quantum exponential function}}
associated to $f$ is defined
\[  \exp^{\rm qt}  :\C_{\infty}\multimap \C_{\infty},\quad  \exp^{\rm qt}  (z) :=  
\lim_{\upvarepsilon\rightarrow 0} e_{\upvarepsilon}(z) .\]
  
 \begin{theo}\label{qtexplimit}  $ \exp^{\rm qt}  (z) =\{ e_{i}(z) |\;i =0,\dots ,d-1 \} $.
  \end{theo}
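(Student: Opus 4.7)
The plan is to parameterize $\upvarepsilon\to 0$ by writing $\upvarepsilon=\upvarepsilon_{N,l}=q^{-Nd-l}$ with $l\in\{0,\ldots,d-1\}$, and show that for each fixed $l$, the sequence $\{e_{\upvarepsilon_{N,l}}(z)\}_{N\geq 1}$ converges to $e_{d-1-l}(z)$. Since the residues $l\mod d$ exhaust the $d$ arithmetic progressions cofinal in $\upvarepsilon\to 0$, this yields exactly the set of $d$ limit points claimed.

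The first step is to identify the unnormalized lattice, using the explicit formula (\ref{explicitformoflambda}) and Binet's formula (\ref{Binetform}). Writing ${\tt Q}_n = f^{n+1}/\sqrt{D}+O(|f^{\ast}|^{n+1})$ and factoring out the dominant scalar $c_N:=f^{N+1}/\sqrt{D}$, I expect to show that
\[ c_N^{-1}\Uplambda_{\upvarepsilon_{N,l}}(f) \;=\; \mathrm{span}_{\F_{q}}\bigl(\{T^{j}\}_{j=0}^{d-1-l},\;\{f^{i}T^{j}\}_{i\geq 1,\,0\leq j\leq d-1}\bigr) + O(|f^{\ast}|^{N}), \]
and then to recognize the leading $\F_{q}$-span on the right as precisely $\mathfrak{a}_{d-1-l}=(f,fT,\ldots,fT^{d-1-l})\subset A_{\infty_{1}}$. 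The ``convergence'' here is in the strong sense that on any fixed ball in ${\bf C}_{\infty}$ of radius $R$, the elements of $c_N^{-1}\Uplambda_{\upvarepsilon_{N,l}}(f)$ and of $\mathfrak{a}_{d-1-l}$ of absolute value $\leq R$ coincide for all $N$ large enough (depending on $R$).

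The second and most delicate step is to show that the sign-normalizing period behaves well under the same limit, namely
\[ c_N\cdot\upxi_{\upvarepsilon_{N,l}} \longrightarrow \upxi_{d-1-l} \]
modulo a $(q-1)$th root of unity which is absorbed into the ambiguity of the period. This requires analyzing each of the three factors in (\ref{formulaxi}): for $\upeta_{\upvarepsilon_{N,l}}$ the signs of $\uplambda\in\Uplambda_{\upvarepsilon_{N,l}}(f)$ are read off the leading Laurent coefficients in $T^{-1}$ and, thanks to the Binet asymptotic, these match the signs of the corresponding elements of $\mathfrak{a}_{d-1-l}$ block by block; the norm zeta function $Z_{\upvarepsilon_{N,l}}(u)$ differs from its analog for $\mathfrak{a}_{d-1-l}$ by a finite truncation and an overall shift by $\deg_T c_N$, producing the correct limit for $t_{\upvarepsilon_{N,l}}$; and the one-unit products $u_{\upvarepsilon_{N,l}}$ converge by an identical tail comparison since each $\langle\uplambda\rangle$ depends only on the normalized Laurent tail of $\uplambda$. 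This is the main obstacle, and the delicate part is keeping track of the interaction between the $\deg_T$ filtration on $\Uplambda_{\upvarepsilon_{N,l}}(f)$ and the valuation induced by $\infty_{1}$ on $\mathfrak{a}_{d-1-l}$, which differ by the factor of $c_N$.

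Combining these two steps yields $\breve{\Uplambda}_{\upvarepsilon_{N,l}}(f)=\upxi_{\upvarepsilon_{N,l}}\Uplambda_{\upvarepsilon_{N,l}}(f)\to \Uplambda_{d-1-l}$ in the uniform-on-compacta sense described above. The product defining $\exp_{\breve{\Uplambda}_{\upvarepsilon_{N,l}}(f)}(z)$ converges in ${\bf C}_{\infty}$ because all but finitely many factors $(1-z/\uplambda)$ are arbitrarily close to $1$, and the finitely many remaining factors agree with those for $\Uplambda_{d-1-l}$ for $N$ large. This gives $e_{\upvarepsilon_{N,l}}(z)\to e_{d-1-l}(z)$, and hence
\[ \exp^{\rm qt}_{f}(z)=\lim_{\upvarepsilon\to 0}e_{\upvarepsilon}(z)=\{e_{0}(z),\ldots,e_{d-1}(z)\}, \]
as desired.
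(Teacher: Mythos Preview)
Your strategy is exactly that of the paper: fix $l$, show $e_{\upvarepsilon_{N,l}}(z)\to e_{d-1-l}(z)$ by first proving the rescaled lattice converges to $\mathfrak{a}_{d-1-l}$ and then proving the period $\upxi_{\upvarepsilon_{N,l}}$ tracks $\upxi_{d-1-l}$ up to the same rescaling, via a factor-by-factor analysis of $\upeta_{\upvarepsilon}$, $t_{\upvarepsilon}$, and $u_{\upvarepsilon}$ (these are the paper's Lemmas~\ref{signlemma}--\ref{ulemma}). One minor slip: with your choice $c_{N}=f^{N+1}/\sqrt{D}$ the limit of $c_{N}^{-1}\Uplambda_{\upvarepsilon_{N,l}}(f)$ is $f^{-1}\mathfrak{a}_{d-1-l}$, not $\mathfrak{a}_{d-1-l}$ itself (your displayed span contains $1,T,\dots,T^{d-1-l}$, which lie outside the proper ideal $\mathfrak{a}_{d-1-l}$); the paper uses $c_{N}=f^{N}/\sqrt{D}$ instead, which makes the identification exact and the subsequent period comparison come out with the right shift $t_{\upvarepsilon}=t_{d-1-l}-d(N-1)(q-1)$.
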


The proof of Theorem \ref{qtexplimit} is a normalized version of  the proof of Theorem 4 in \cite{DGIII}.
In particular, we will show that for $l\in \{ 0,\dots ,d-1\}$  and $\upvarepsilon_{N,l}=q^{-dN-l}$, 
$ \lim_{N\rightarrow\infty}e_{\upvarepsilon_{N,l}}(z) $ $=$ $ e_{d-1-l}(z)$.
The first step will be to relate the individual factors appearing in $\upxi_{\upvarepsilon_{N,l}}$ with their counterparts appearing
in the period $\upxi_{d-1-l}$ corresponding to the ideal $\mathfrak{a}_{d-1-l}$, defined up to $(q-1)$th roots of unity by
$ \upxi^{q-1}_{d-1-l} $ $ =$ $ \upeta^{-1}_{d-1-l}\times \uppi^{-t_{d-1-l}}\times u^{q-1}_{d-1-l}$. 
See again Theorem 7.10.10 of \cite{Goss}.

\begin{lemm}\label{signlemma} For $\upvarepsilon=\upvarepsilon_{N,l}$,  $\upeta_{\upvarepsilon}=-1=\upeta_{d-1-l}$.
\end{lemm}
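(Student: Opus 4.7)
The plan is to compute both quantities $\upeta_{\upvarepsilon_{N,l}}$ and $\upeta_{d-1-l}$ by the same abstract sign-product argument, which applies uniformly to any $\F_q$-subspace $\Lambda\subset k_\infty$ filtered by $T$-degree. The key structural observation is that the successive quotient $V_M/V_{M-1}$---where $V_M:=\{\uplambda\in\Lambda : \deg_T\uplambda\le M\}$---is at most one-dimensional over $\F_q$: for any two $\uplambda_1,\uplambda_2\in V_M\setminus V_{M-1}$, the combination ${\rm sgn}(\uplambda_2)\uplambda_1 - {\rm sgn}(\uplambda_1)\uplambda_2$ lies in $V_{M-1}$, so their classes agree modulo an $\F_q^\times$-scalar. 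In particular, the argument does not require the monic structure of any specific basis of $\Lambda$.

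Fix an achievable degree $M$ for $\Lambda$ and any representative $v\in V_M\setminus V_{M-1}$. Then the nonzero elements of degree exactly $M$ are parametrized uniquely as $\{cv+w : c\in\F_q^\times,\ w\in V_{M-1}\}$, and ${\rm sgn}(cv+w)=c\cdot{\rm sgn}(v)$. Using Fermat's identity ${\rm sgn}(v)^{q-1}=1$ and Wilson's identity $\prod_{c\in\F_q^\times}c=-1$, the ${\rm sgn}(v)$ contribution washes out:
\[
\prod_{\deg_T(\uplambda)=M}{\rm sgn}(\uplambda) \;=\; \Bigl(\prod_{c\in\F_q^\times}c\Bigr)^{\!|V_{M-1}|} \;=\; (-1)^{q^{\dim V_{M-1}}}.
\]
This equals $-1\in\F_q^\times$ in every characteristic: for odd $q$ the exponent $q^{k}$ is odd, and in characteristic $2$ one has $-1=1$. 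Since the value is independent of the achievable $M$, the limit defining $\upeta_{\Lambda}$ collapses to $-1$, and applying this to $\Lambda=\Uplambda_{\upvarepsilon_{N,l}}(f)$ and $\Lambda=\mathfrak{a}_{d-1-l}$ proves both equalities of the lemma simultaneously.

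The main obstacle I anticipate is making the convergence of the defining limit fully precise at non-achievable degrees, where the product over $\deg_T(\uplambda)=M$ is empty and so formally equals $1\neq -1$ in odd characteristic: in the Goss-Hayes framework of Theorem 7.10.10 of \cite{Goss}, the limit is understood as restricted to those $M$ at which $V_M\neq V_{M-1}$, or equivalently through a cumulative product along a suitable exhaustion, either of which yields $-1$. Beyond this bookkeeping issue, the argument is entirely formal, and the uniformity across the two lattices $\Uplambda_{\upvarepsilon_{N,l}}(f)$ and $\mathfrak{a}_{d-1-l}$ is automatic: no comparison between their bases is needed, only the filtration by $T$-degree.
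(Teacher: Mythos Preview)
Your proof is correct and follows essentially the same approach as the paper: both compute the sign product over the degree-$M$ slice as $(-1)^{q^{k}}=-1$ via Wilson's identity $\prod_{c\in\F_q^\times}c=-1$. The only difference is packaging---the paper invokes the explicit basis (\ref{explicitformoflambda}) to count elements with a given sign, whereas you extract the abstract fact that $V_M/V_{M-1}$ is at most one-dimensional for any $\F_q$-subspace of $k_\infty$, which yields the same count without reference to a particular basis; your remark about non-achievable degrees is also well-placed, since for $l>0$ there is a finite gap in the degree set of both $\Uplambda_{\upvarepsilon_{N,l}}(f)$ and $\mathfrak{a}_{d-1-l}$, after which every degree occurs and the limit stabilizes at $-1$.
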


\begin{proof}   Since $\prod_{c\in \F_{q}^{\times}}c=-1$, then by (\ref{explicitformoflambda}),  for all $M$
 \[ \prod_{\substack{ 0\not=\uplambda\in \Uplambda_{\upvarepsilon}(f), \\ \deg(\uplambda)= dN+M}} {\rm sgn}(\uplambda)= 
 \prod_{c\in \F_{q}^{\times}} \left(\prod_{\substack{ 0\not=\uplambda\in \Uplambda_{\upvarepsilon}(f), \\ \deg(\uplambda)= dN+M \\ {\rm sgn}( \uplambda ) =c}} c \right)=
 (-1)^{q^{M}}=-1,\]
giving $ \upeta_{\upvarepsilon}=-1$.  The second equality is proved similarly. 
 \end{proof}

\begin{lemm}\label{tlemma} For $\upvarepsilon=\upvarepsilon_{N,l}$, $ t_{\upvarepsilon} = t_{d-1-l}-d(N-1)(q-1)$.
\end{lemm}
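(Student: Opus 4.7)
The plan is to obtain an explicit rational expression for $Z_{\upvarepsilon_{N,l}}(u)$ from the basis (\ref{explicitformoflambda}), to match it against $Z_{d-1-l}(u)$ for the ideal $\mathfrak{a}_{d-1-l}$, and to extract the relation between their derivatives at $u=1$ by logarithmic differentiation. By Binet's formula (\ref{Binetform}) one has $\deg_{T}({\tt Q}_{n}) = dn$, so the basis in (\ref{explicitformoflambda}) supplies $d-l$ linearly independent elements at consecutive degrees $dN, dN+1, \dots, dN+(d-1-l)$, then a gap of $l$ vacant degrees, and then exactly one new basis element at every degree $\ge dN+d$. Counting the nonzero elements of $\Uplambda_{\upvarepsilon_{N,l}}(f)$ at each degree and summing the resulting geometric series yields
\[ Z_{\upvarepsilon_{N,l}}(u) \;=\; \frac{(q-1)u^{dN}}{1-qu}\big[\,1 - (qu)^{d-l} + q^{d-l}u^{d}\,\big]. \]

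To finish this step I must identify $Z_{d-1-l}(u)$ with $Z_{\upvarepsilon_{1,l}}(u)$. The generators $f, fT, \dots, fT^{d-1-l}$ of $\mathfrak{a}_{d-1-l}\subset A_{\infty_{1}}$ supply $d-l$ independent elements of $\infty_{1}$-degrees $d, d+1, \dots, 2d-1-l$; the crucial point is that $fT^{j}$ for $d-l \le j \le d-1$ does \emph{not} belong to $\mathfrak{a}_{d-1-l}$ (dividing by $f$ would force $T^{j}$ into the fractional ideal $(1, T, \dots, T^{d-1-l})$, which a direct degree comparison excludes), producing a vacant gap of exactly $l$ consecutive degrees. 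From degree $2d$ onwards, Riemann-Roch applied to $\mathfrak{a}_{d-1-l}$ viewed as an invertible sheaf on $\Upsigma_{K}\setminus\{\infty_{1}\}$ forces exactly one new dimension per degree. Hence $Z_{d-1-l}(u)$ has the same closed form as above with $u^{dN}$ replaced by $u^{d}$, and therefore $Z_{\upvarepsilon_{N,l}}(u) = u^{d(N-1)}\, Z_{d-1-l}(u)$.

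Differentiating this identity and evaluating at $u=1$ gives $Z'_{\upvarepsilon_{N,l}}(1) = d(N-1)\, Z_{d-1-l}(1) + Z'_{d-1-l}(1)$, while substituting $u=1$ into the closed form yields $Z_{d-1-l}(1) = \tfrac{q-1}{1-q}\big[1 - q^{d-l} + q^{d-l}\big] = -1$. Therefore $Z'_{\upvarepsilon_{N,l}}(1) = -d(N-1) + Z'_{d-1-l}(1)$, and multiplying by $(q-1)$ produces the desired identity $t_{\upvarepsilon} = t_{d-1-l} - d(N-1)(q-1)$. The main obstacle is the gap analysis for $\mathfrak{a}_{d-1-l}$: one must verify that its $\F_{q}$-graded dimensions in $A_{\infty_{1}}$ match those of $\Uplambda_{\upvarepsilon_{1,l}}(f)$ in $A$ degree by degree, a structural fact about these non-principal ideals that underpins the whole calculation; once it is in hand, the rest is a routine geometric-series manipulation followed by a single logarithmic derivative.
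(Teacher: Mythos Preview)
Your proof is correct and follows essentially the same route as the paper: both derive the key identity $Z_{\upvarepsilon_{N,l}}(u)=u^{d(N-1)}Z_{d-1-l}(u)$ from the explicit degree-by-degree count, compute $Z_{d-1-l}(1)=-1$, and differentiate at $u=1$. Your version packages the zeta functions into a single closed form and spells out the gap structure of $\mathfrak{a}_{d-1-l}$ (which the paper simply asserts ``similarly'' from the description $\mathfrak{a}_{d-1-l}=(f,fT,\dots,fT^{d-1-l})$), but the argument is the same.
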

 
\begin{proof}  By (\ref{explicitformoflambda}), 
there are $q-1$ elements of degree $dN$, $q(q-1)$ of degree $dN+1$,  etc., which gives (for $|u|<1$)
 \begin{align*} 
 Z_{\upvarepsilon}(u)  
 & = (q-1)\left(u^{dN} +qu^{dN+1}+\cdots + q^{d-1-l} u^{dN+d-1-l} +\frac{q^{d-l} u^{d(N+1)}}{1-qu} \right) . 
 \end{align*}
 For the ideal $\mathfrak{a}_{d-1-l}$, one defines (see \cite{Goss}, Definition 7.8.2) 
 \[ Z_{d-1-l}(u) =  \sum_{0\not=\upalpha\in\mathfrak{a}_{d-1-l}} u^{\deg_{T} (\upalpha )}.\]
 Using the description $\mathfrak{a}_{d-1-l} =(f,fT,\dots , fT^{d-1-l})$,  $ Z_{d-1-l}(u)$ can be written
 \begin{align}\label{formforZd-1-l}  
(q-1)\left(u^{d} +\cdots + q^{d-1-l} u^{2d-1-l} +\frac{q^{d-l} u^{2d}}{1-qu} \right)  & = u^{-(N-1)d}Z_{\upvarepsilon}(u) .
 \end{align}
 Since $t_{d-1-l}=(q-1)Z_{d-1-l}'(1)$,
then evaluating at $u=1$ the derivative of the equation $ Z_{\upvarepsilon}(u) =u^{(N-1)d}Z_{d-1-l}(u)$, and using the calculation 
\[ Z_{d-1-l}(1)=(q-1)\left( 1 + \cdots +  q^{d-1-l}   + \frac{q^{d-l}}{1-q} \right) =   -1,\]   we obtain
$ t_{\upvarepsilon} = t_{\mathfrak{a}_{d-1-l}}-d(N-1)(q-1)$.
\end{proof}



\begin{lemm}\label{ulemma}  Let $u_{d-1-l}$ be the $1$-unit part of $\upxi_{d-1-l}$.  Then for $\upvarepsilon=\upvarepsilon_{N,l}$,
\begin{align}\label{uepsilonform} u_{\upvarepsilon} = \left\{  \begin{array}{ll}
\frac{\sqrt{{\tt D}}}{f}u_{d-1-l} + O(q^{-2d(N+1)}) & \text{if $(d,p)=1$} \\ 
\\
\frac{\sqrt{{\tt D}}}{f}u_{d-1-l} \langle T\rangle^{N-1}+ O(q^{-2d(N+1)}) & \text{otherwise.}
\end{array} \right. \end{align}
\end{lemm}

\begin{proof}  First assume $(d,p)=1$ so that $\uppi=f^{-1/d}$.  Since the leading coefficient of the $T^{-1}$ expansion of ${\tt Q}_{N}$ is always $1$, 
${\rm sgn}({\tt Q}_{N})=1$.  Then by Binet's formula (\ref{Binetform})
\[ \langle {\tt Q}_{N}\rangle ={\tt Q}_{N}\uppi^{dN} =  f^{-N}\cdot \frac{f^{N+1}-(f^{\ast})^{N+1}}{\sqrt{{\tt D}}} = \frac{f}{\sqrt{{\tt D}}}  + O(q^{-d(2N+2)}) .  \]
Let us write 
\[u_{\upvarepsilon}(M) := \prod_{\substack{ 0\not=\uplambda\in\Uplambda_{\upvarepsilon}(f)\\ \deg(\uplambda)\leq M}} \langle\uplambda\rangle ,\quad   u_{d-1-l}(M):=
 \prod_{\substack{ 0\not=\upalpha\in\mathfrak{a}_{d-1-l} \\ \deg(\upalpha)\leq M}} \langle\upalpha\rangle.\] 
Then
\begin{align}\label{udNform}   u_{\upvarepsilon}(dN) = \prod_{c\in\F_{q}^{\times}} \langle c {\tt Q}_{N}\rangle =  \langle {\tt Q}_{N}\rangle^{q-1} = \left(  \frac{f}{\sqrt{{\tt D}}} \right)^{q-1}   + O(q^{-d(2N+2)}) ,\end{align}
where we use the fact that $|f/\sqrt{{\tt D}}|=1$.  The elements of degree $dN+1$ are of the form $c'({\tt Q}_{N}T+c{\tt Q}_{N})$ where $c'\in \F_{q}^{\times}$ and $c\in \F_{q}$.  Using that $\langle cx\rangle = \langle x\rangle$ for all $x\in k_{\infty}$ and $c\in\F_{q}^{\times}$, the identity
$ \langle  {\tt Q}_{N}T+c{\tt Q}_{N} \rangle $ $=$ $ (T+c)\uppi \langle{\tt Q}_{N} \rangle $,
and (\ref{udNform}),
 we have
\begin{align*}
u_{\upvarepsilon}(dN+1) 
 & =  \left( \prod_{c\in\F_{q}} \uppi (T+c)\right)^{q-1} \left( \frac{f}{\sqrt{{\tt D}}}\right)^{q^{2}-1} + O(q^{-d(2N+2)}) .
\end{align*}
On the other hand, $\langle f\rangle=1$ and 
$ \langle fT+cf\rangle=\uppi (T+c) $, so by induction
\begin{align*} u_{\upvarepsilon}(dN+M)  & = u_{d-1-l}(d+M) \left(  \frac{f}{\sqrt{{\tt D}}} \right)^{q^{M+1}-1} + O(q^{-d(2N+2)}).
\end{align*}
Since $|f/\sqrt{{\tt D}}| = 1$ and $q^{M+1}$ is a power of the characteristic of $\F_{q}$, it follows that \[ \lim_{M\rightarrow\infty}  \left(  \frac{f}{\sqrt{{\tt D}}} \right)^{q^{M+1}-1} =  \frac{\sqrt{{\tt D}}}{f}  \lim_{M\rightarrow\infty}  \left(  \frac{f}{\sqrt{{\tt D}}} \right)^{q^{M+1}} =  \frac{\sqrt{{\tt D}}}{f}.\]  This gives
the first formula of (\ref{uepsilonform}).
The case  $(d,p)\not=1$ is treated similarly.
\end{proof}

  
  

   \begin{proof}[Proof of Theorem \ref{qtexplimit}] 
 We set as before $\upvarepsilon =q^{-dN-l}$ for $l$ fixed.  If $(d,p)=1$,  then by taking a $(q-1)$th root of (\ref{formulaxi}), and applying Lemmas \ref{signlemma}, \ref{tlemma}, \ref{ulemma},
 we obtain
 \begin{align}\label{firstapproxxi} \upxi_{\upvarepsilon} & =
  \upxi_{d-1-l}\frac{\sqrt{{\tt D}}}{f^{N}} +O(q^{-(3N+1)d}).
 \end{align}
If $(d,p)\not=1$, then using $\uppi=(fT)^{-1/(d+1)}$ we also obtain (\ref{firstapproxxi}).
In either event, if we abbreviate 
$ C_{\upvarepsilon}:= \upxi_{\upvarepsilon}- \upxi_{d-1-l}\frac{\sqrt{{\tt D}}}{f^{N}}$, we have
\begin{align*}
\left| \upxi_{\upvarepsilon}^{-1}-\upxi_{d-1-l}^{-1}\frac{f^{N}}{\sqrt{{\tt D}}} \right| 
=\left| \frac{-f^{N} C_{\upvarepsilon}  }{\sqrt{{\tt D}} \upxi_{\upvarepsilon}  \upxi_{d-1-l}}   \right| 
\leq {\rm const.}\times  \frac{q^{dN}\cdot q^{-d(3N+1)} }{|\upxi_{\upvarepsilon}|}.
\end{align*}
By Lemma \ref{tlemma}, $|\upxi_{\upvarepsilon}|={\rm const.}\times |\uppi^{d(N-1)}| = {\rm const.} \times q^{-d(N-1)}$,
hence
\[  \upxi^{-1}_{\upvarepsilon}  =  \upxi^{-1}_{d-1-l}\frac{f^{N}}{\sqrt{{\tt D}}} + O(q^{-d(N+2) }) .\]
Then
$ \lim_{N\rightarrow\infty} e_{\upvarepsilon}(z)   =  \lim_{N\rightarrow\infty} \upxi_{\upvarepsilon}\exp_{\Uplambda_{\upvarepsilon}(f)}
 \left(  \upxi_{\upvarepsilon}^{-1} z \right) 
 = e_{d-1-l}(z).
$
  \end{proof}

   As a consequence of (\ref{firstapproxxi}) of the proof of Theorem \ref{qtexplimit} we have
   
   \begin{coro}\label{xiconvergence}  $\lim_{N\rightarrow\infty} (f^{N}/\sqrt{{\tt D}})\upxi_{\upvarepsilon_{N,l}} =\upxi_{d-1-l}$
   \end{coro}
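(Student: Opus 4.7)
The proof is essentially a rearrangement of the estimates already obtained in the proof of Theorem \ref{qtexplimit}. My plan is as follows.

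First, I would observe that both cases $(d,p)=1$ and $(d,p)\neq 1$ collapse to the same formula
\[ \upxi_{\upvarepsilon_{N,l}} = \upxi_{d-1-l}\frac{\sqrt{D}}{f^{N}} + O(q^{-(3N+1)d}), \]
by direct appeal to lines (\ref{firstapproxxi}) and (\ref{secondapproxxi}). Multiplying both sides by $f^{N}/\sqrt{D}$ gives
\[ \frac{f^{N}}{\sqrt{D}}\upxi_{\upvarepsilon_{N,l}} = \upxi_{d-1-l} + \frac{f^{N}}{\sqrt{D}}\cdot O(q^{-(3N+1)d}). \]

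Next, I would estimate the size of the error term. Since $|f| = q^{d}$ and $|D| = |a^{2}+4b| = q^{2d}$ (because $a$ is monic of degree $d$ and $b\in\F_{q}^{\times}$), we have $|\sqrt{D}| = q^{d}$, hence $|f^{N}/\sqrt{D}| = q^{d(N-1)}$. Therefore
\[ \left|\frac{f^{N}}{\sqrt{D}}\cdot O(q^{-(3N+1)d})\right| \leq \text{const}\cdot q^{d(N-1)}\cdot q^{-(3N+1)d} = \text{const}\cdot q^{-2d(N+1)}, \]
which tends to $0$ as $N\to\infty$. This yields the desired conclusion
\[ \lim_{N\to\infty}\frac{f^{N}}{\sqrt{D}}\upxi_{\upvarepsilon_{N,l}} = \upxi_{d-1-l}. \]

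There is essentially no obstacle here; the substantive analytical work has already been carried out in Lemmas \ref{signlemma}, \ref{tlemma}, \ref{ulemma} and their assembly into the asymptotic formulas (\ref{firstapproxxi}), (\ref{secondapproxxi}). The only point that requires a brief verification is the computation $|\sqrt{D}|=q^{d}$, so that the prefactor $f^{N}/\sqrt{D}$ has valuation $q^{d(N-1)}$, which grows strictly slower than $q^{(3N+1)d}$ decays. Thus the corollary is an immediate extraction from the proof of Theorem \ref{qtexplimit}.
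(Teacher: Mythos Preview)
Your proof is correct and follows the same approach as the paper: the corollary is stated there as an immediate consequence of lines (\ref{firstapproxxi}) and (\ref{secondapproxxi}), and you have simply written out explicitly the multiplication by $f^{N}/\sqrt{D}$ and the verification that the error term vanishes. The extra detail you provide (computing $|\sqrt{D}|=q^{d}$ and bounding the remainder) is sound and makes explicit what the paper leaves implicit.
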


   In the sequel, we will need to know the absolute value of $\upxi_{\upvarepsilon}$ for $\upvarepsilon =\upvarepsilon_{N,l}$.
   
   \begin{prop}\label{transabsvalue}  For $l=0,\dots ,d-1$, $  |\upxi_{d-1-l}|=q^{-lq^{d-l}-(d-1)+\frac{1}{q-1}} $.   \end{prop}
   
   \begin{proof}  Using  (\ref{formforZd-1-l}), a calculation involving telescoping cancellations yields 
   \begin{align*} Z'_{d-1-l}(1) 
   & = -d -q-\cdots -q^{d-1-l} + (2d-1-l)q^{d-l} +\frac{2dq^{d-l}(1-q)+q^{d-l+1}}{q-1} \\
 &=  \frac{-(q-1) (d-1) +1  +lq^{d-l}(1-q)  }{q-1}  = -(d-1)-lq^{d-l} +\frac{1}{q-1}.
   \end{align*}
   \end{proof}
   
    
 In what follows  a Drinfeld module over an order \cite{Hayes0}, \cite{DGVI} is denoted variously
 \[  \D =\uprho= (\C_{\infty}, \uprho )   =\C_{\infty}/\Uplambda ,\]
  where $\Uplambda$ is the kernel of the associated exponential.
The quantum analog of Drinfeld module is defined informally in our setting  
as a limit
 \begin{align}\label{informalqtD} \breve{\D}^{\rm qt}(f):=``\lim_{\upvarepsilon\rightarrow 0}  \breve{\D}_{\upvarepsilon}(f)\text{''} =\{\breve{\D}_{0}:=\C_{\infty}/\upxi_{0}\mathfrak{a}_{0},\dots ,\breve{\D}_{d-1}:=
 \C_{\infty}/\upxi_{d-1}\mathfrak{a}_{d-1} \}, \end{align}
 where $\breve{\D}_{0},\dots , \breve{\D}_{d-1}$ are Hayes modules over the order $A_{f}$ and the $ \breve{\D}_{\upvarepsilon}(f)$ may be considered as examples of the notion of {\it approximate Drinfeld module}. 
 Leaving aside the technical details of its definition, $ \breve{\D}_{\upvarepsilon}(f)$ 
is analytically uniformized by the vector space $\breve{\Uplambda}_{\upvarepsilon}(f)$ -- 
 a multiple of $\Uplambda_{\upvarepsilon}(f)$.  It is natural then to define $j(\breve{\D}_{\upvarepsilon}(f)):=j_{\upvarepsilon}(f)$ (defined in \S 2 of \cite{DGIII}), and after taking limits, $j(\breve{\D}^{\rm qt}(f)) :=
j^{\rm qt}(f)$.
 
 It remains to make the limit in (\ref{informalqtD}) precise i.e.\ we must give a meaningful notion of
 limiting point, which we will refer to as a {\it quantum point}.  To do this, we will make use of connecting maps 
 $ \breve{\D}_{0}\longrightarrow \breve{\D}_{d-1-l}$, $ l=0,\dots ,d-1$, defined as follows.
Let  $\uptau$ denote the Frobenius map $x\mapsto x^{q}$,  $x\in \C_{\infty}$, so that
$\uprho_{i}: A_{f}\longrightarrow   \C_{\infty}\{ \uptau\} $,
where $\breve{\D}_{i} =(\C_{\infty}, \uprho_{i})$.
The $\ast$-action 
satisfies
$\mathfrak{a}_{i}\ast\uprho_{0}=\uprho_{d-i}$, 
and gives rise to a morphism
$ \Upphi_{i}:\uprho_{0}\longrightarrow \uprho_{d-i}$
where $\Upphi_{i}$ is 
the unique   
{\it monic} generator of the principal left ideal
in $H_{A_{f}}\{ \uptau\}$ generated by the polynomials $\uprho_{0,a}$, $a\in \mathfrak{a}_{i}$. See \cite{Hayes0}, \S\S 6, 8. 


Recall the notation $e_{0}(z)=\exp_{\Uplambda_{0}}(z)$.

\begin{lemm}\label{explicitrhocalc}  As a polynomial in the variable $x$, for $0<i\leq  d-1$, 
\[\Upphi_{i}(x)=x\prod_{0\not =\alpha\in \mathfrak{a}_{d-i}/\mathfrak{a}_{0}} \left(x- e_{0}(\upxi_{0}\alpha) \right).\]
\end{lemm}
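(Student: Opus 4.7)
The strategy is to determine $\Upphi_i$ from its zero set. By definition $\Upphi_i$ is the unique monic generator in $H_{A_{\infty_{1}}}\{\uptau\}$ of the left ideal generated by $\{\uprho_{0,a}\}_{a\in\mathfrak{a}_i}$; since $\mathfrak{a}_i = (f,fT,\ldots,fT^i)$ as an $A_{\infty_{1}}$-ideal, this left ideal is in fact generated by $\uprho_{0,f}, \uprho_{0,fT}, \ldots, \uprho_{0,fT^i}$. As an isogeny of rank-$1$ Drinfeld modules $\uprho_0 \to \uprho_{d-i}$, $\Upphi_i$ is $\F_q$-additive with nonzero linear term and hence separable, so it is determined by its root set. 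Through the uniformization $e_0:{\bf C}_{\infty}/\Uplambda_0 \to {\bf C}_{\infty}$ with $\Uplambda_0=\upxi_0 f A_{\infty_{1}}$, the point $y=e_0(z)$ is a common root of the $\uprho_{0,fT^j}$ iff $(fT^j)z \in \Uplambda_0$ for every $j=0,\ldots,i$, i.e., iff $z \in \mathfrak{a}_i^{-1}\Uplambda_0$. Hence
\[ \ker \Upphi_i = e_0\bigl(\mathfrak{a}_i^{-1}\Uplambda_0/\Uplambda_0\bigr), \]
and the lemma reduces to showing that $V_i := \F_q\langle fT, fT^2, \ldots, fT^{d-i}\rangle$ maps bijectively to this group via $\alpha \mapsto \upxi_0\alpha \bmod \Uplambda_0$.

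I would establish the containment $\upxi_0 V_i \subseteq \mathfrak{a}_i^{-1}\Uplambda_0$ by computing, for $1\leq k\leq d-i$ and $0\leq j\leq i$, that $(fT^j)\cdot \upxi_0 fT^k = \upxi_0 f^2 T^{j+k}$ lies in $\Uplambda_0 = \upxi_0 f A_{\infty_{1}}$ exactly when $fT^{j+k}\in A_{\infty_{1}}$. A short reduction using the minimal polynomial relation $f^2 = af + b$ together with the $T$-expansion of the monic polynomial $a$ shows that $fT^m \in A_{\infty_{1}}$ precisely when $m\leq d$, and the constraint $j+k\leq i+(d-i)=d$ then delivers the containment.

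Injectivity modulo $\Uplambda_0$ is an $\infty_2$-pole argument: a nonzero $\alpha = f\sum_{k=1}^{d-i}c_k T^k \in V_i$ lies in $\Uplambda_0/\upxi_0 = fA_{\infty_{1}}$ iff $\sum c_k T^k \in A_{\infty_{1}}$, but any nonzero polynomial of positive $T$-degree has a pole at the place $\infty_2$ of $\Upsigma_K$, contradicting regularity at $\infty_2$. This produces $q^{d-i}$ distinct cosets inside $\mathfrak{a}_i^{-1}\Uplambda_0/\Uplambda_0$. To match the total cardinality I would note $|\mathfrak{a}_i^{-1}\Uplambda_0/\Uplambda_0| = |\mathfrak{a}_i^{-1}/A_{\infty_{1}}| = [A_{\infty_{1}}:\mathfrak{a}_i]$ by invertibility of $\mathfrak{a}_i$ in the Dedekind domain $A_{\infty_{1}}$, and produce $\{1,fT^{i+1},\ldots,fT^{d-1}\}$ as an $\F_q$-spanning set of $A_{\infty_{1}}/\mathfrak{a}_i$ via the same $f^2 = af + b$ reduction eliminating $fT^m$ for $m\geq d$, bounding $[A_{\infty_{1}}:\mathfrak{a}_i]\leq q^{d-i}$. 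Combined with the $q^{d-i}$ lower bound from injectivity, both quotients have exactly $q^{d-i}$ elements and the inclusion becomes an equality.

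With $\ker\Upphi_i = \{e_0(\upxi_0\alpha) : \alpha \in V_i\}$ in hand, the product formula is immediate from monicity, separability, and $\F_q$-additivity of $\Upphi_i$. The main technical obstacle is the index computation $[A_{\infty_{1}}:\mathfrak{a}_i]=q^{d-i}$, which the spanning-set argument above resolves and which moreover fits the overall filtration $(f)=\mathfrak{a}_0\subsetneq\mathfrak{a}_1\subsetneq\cdots\subsetneq\mathfrak{a}_{d-1}\subsetneq\mathfrak{a}_d=A_{\infty_{1}}$ (the last equality from $b=f^2-af \in \mathfrak{a}_d\cap\F_q^{\times}$) of exactly $d$ strict steps with total quotient of size $q^d$, so each successive index must equal $q$.
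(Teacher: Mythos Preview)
Your argument is correct and follows the same overall strategy as the paper---identifying $\ker\Upphi_i$ with $e_0(\upxi_0 V_i)$ via the characterization $\ker\Upphi_i=\bigcap_{j=0}^{i}\ker\uprho_{0,fT^j}$---but the execution is genuinely different. The paper computes each $W_{fT^j}:=(fT^j)^{-1}(f)\bmod(f)$ separately, writing down an explicit $\F_q$-basis for each (using the quadratic relation $f^2=af+b$ to rewrite $f^2T^{-i}$ in terms of the generators), and then intersects these step by step via an induction on $i$, checking at each stage that $fT^{d-i}\notin W_{fT^{i+1}}$. Your route bypasses the individual $W_{fT^j}$ entirely: you go straight to $\mathfrak{a}_i^{-1}\Uplambda_0/\Uplambda_0$ and pin down its size by an index computation. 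The key conceptual input you add is the valuation at $\infty_2$: since $fT^m$ has order $d-m$ there, the pole argument gives injectivity cleanly, and the same valuation shows $\mathfrak{a}_i$ is supported only at $\infty_2$ (indeed $\mathfrak{a}_i=\mathfrak{p}_{\infty_2}^{\,d-i}$), from which $[A_{\infty_1}:\mathfrak{a}_i]=q^{d-i}$ is immediate. This is more structural than the paper's hands-on basis manipulation and explains \emph{why} the answer has the shape it does.

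One remark on presentation: your ``$f^2=af+b$ reduction eliminating $fT^m$ for $m\geq d$'' is a bit telegraphic as a spanning argument for $A_{\infty_1}/\mathfrak{a}_i$, since products like $(fT^j)(fT^k)=f^2T^{j+k}$ with $j+k>d$ need a short recursion to land in $\mathfrak{a}_i$. The cleanest fix is exactly the $\infty_2$-local picture you are already implicitly using: since $(f)=\mathfrak{p}_{\infty_2}^{\,d}$, the quotient $A_{\infty_1}/(f)$ is the local ring $\mathcal{O}_{\infty_2}/\mathfrak{m}_{\infty_2}^{d}$, in which $1,fT^{d-1},\dots,fT$ (of $\infty_2$-orders $0,1,\dots,d-1$) form a basis, and $\mathfrak{a}_i/(f)=\mathfrak{m}_{\infty_2}^{\,d-i}/\mathfrak{m}_{\infty_2}^{\,d}$. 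This simultaneously gives the spanning set, the strictness of the filtration $\mathfrak{a}_0\subsetneq\cdots\subsetneq\mathfrak{a}_d$, and the index, so either of your two arguments for the upper bound becomes rigorous once phrased this way.
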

\begin{proof}  In what follows, we identify $\mathfrak{a}_{d-i}/\mathfrak{a}_{0}$ 
with the vector space
$   \mathbb{F}_{q}\langle fT, ..., fT^{d-i}\rangle $.
To ease notation we will also write 
$ W_{a} := \{\upalpha\in A_{f}:\; \uprho_{0,a}(e_{0}(\upxi_{0}\upalpha ))=0\} $.  Then the g.c.d.\ of the set 
$ \{\uprho_{0,f}, \dots , \uprho_{0, fT^{i}}\}$ will be the separable 
$\F_{q}$-additive polynomial whose roots belong
to the $\mathbb{F}_{q}$-vector space 
$  \left\{ e_{0}(\upxi_{0}\upalpha ) : \; \upalpha \in W_{f}\cap W_{fT}\cap ... \cap W_{fT^{i}}\right\}$. 
In other words 
\[\Upphi_{i}(x)=x\prod_{0\not=\alpha\in W_{f}\cap W_{fT}\cap ... \cap W_{fT^{i}}} \left(x- e_{0}(\upxi_{0}\alpha) \right).\]
It thus remains for us to identify $ W_{f}\cap W_{fT}\cap ... \cap W_{fT^{i}}$.
By
Theorem 4.8 of \cite{Hayes0},
$ W_{fT^{i}} = [fT^{i}]^{-1} (f)  \mod (f) $.
When $i=0$, we have
$W_{f}=\mathbb{F}_{q}\langle 1, fT, ..., fT^{d-1}\rangle $. 
For $0<i<d$,  we claim that we may identify $W_{fT^{i}} = [fT^{i}]^{-1}(f)/(f)$ with
\begin{align*}    \mathbb{F}_{q}\langle T^{-i}, fT^{-i}, fT^{-i+1}, \dots , fT^{d-1-i}, f^{2}T^{-i}\dots \rangle/ (f) \\ 
 = \mathbb{F}_{q}\langle T^{-i}, fT^{-i},  \dots , fT^{-1}, fT,\dots , fT^{d-1-i}, f^{2}T^{-i}, \dots , f^{2}T^{-1}\rangle/ (f).
\end{align*}
Since $\deg (fT^{i})=d+i$, $W_{fT^{i}}$ is a $d+i$ dimensional $\F_{q}$-vector space, and since the set  $T^{-i}, fT^{-i},  \dots , fT^{-1}, fT,\dots , fT^{d-1-i}, f^{2}T^{-i}, \dots , f^{2}T^{-1}$ has $d+i$ elements, it suffices to show independence.
However this is clear, since the elements of this set all have distinct degrees, and $(f)$ contains no elements having these degrees.
Using the quadratic relation $f^{2}={\tt a}f+{\tt b}$, we may write $ f^{2}T^{-i}   = {\tt a}fT^{-i}+ {\tt b}T^{-i}$,
where ${\tt a}=T^{d}+c_{d-1}T^{d-1}+\cdots  + c_{1}T+c_{0}\in\F_{q}[T]$ and ${\tt b}\in\F_{q}^{\times}$.  Thus we may replace $f^{2}T^{-i}$ by $fT^{d-i}$, so that 
$W_{fT}  =\mathbb{F}_{q}\langle T^{-1}, fT^{-1},  fT,\dots, fT^{d-2}, fT^{d-1}\rangle $ and for $i>1$,
\begin{align*} 
W_{fT^{i}} & =\mathbb{F}_{q}\langle T^{-i}, fT^{-i} ,\dots, fT^{-1}, fT,\dots ,fT^{d-i}, f^{2}T^{-i+1}, \dots, f^{2}T^{-1}\rangle.
\end{align*}
We now show by induction that for $i>0$, \begin{align}\label{bigintersection} \bigcap_{j=0}^{i}W_{fT^{j}}=\mathbb{F}_{q}\langle fT, ..., fT^{d-i}\rangle.\end{align}
When $i=1$, $\F_{q}\langle fT,\dots ,fT^{d-1}\rangle \subset W_{f}\cap W_{fT}$.  Since $W_{fT}$ has no constant polynomials, $\dim (W_{f}\cap W_{fT})\leq d-1$,
so it follows that $\F_{q}\langle fT,\dots ,fT^{d-1}\rangle =W_{f}\cap W_{fT}$.  Now assume that (\ref{bigintersection}) is true up to $i$, and consider 
\[  \bigcap_{j=0}^{i+1}W_{fT^{j}}=\mathbb{F}_{q}\langle fT, \dots , fT^{d-i}\rangle \cap W_{fT^{i+1}} \supset \mathbb{F}_{q}\langle fT, \dots , fT^{d-i-1}\rangle .\]
It will suffice to show that $fT^{d-i}\not\in W_{fT^{i+1}}$.  Now using the quadratic relation $f^{2}={\tt a}f+{\tt b}$ we may write the element $ f^{2}T^{-(i+1)+1} = f^{2}T^{-i}\in W_{fT^{i+1}}$ as
\begin{align*} & fT^{d-i} + c_{d-1}fT^{d-i-1}+\dots + c_{1}fT+c_{0}f +{\tt b}T^{-i}  \\  \equiv \quad&  fT^{d-i} + c_{d-1}fT^{d-i-1}+\dots +c_{1}fT+{\tt b}T^{-i} \mod (f). \end{align*}
Then $fT^{d-i}\in W_{fT^{i+1}}$ would imply that $T^{-i}\in W_{fT^{i+1}}$ which is false.
\end{proof}



Given $w\in \C_{\infty}$ we define the associated {\bf {\em quantum point}} by \begin{align}\label{DefQPoint}    w^{\rm qt}  := \{w,\Upphi_{d-1}(w),\dots ,\Upphi_{1}(w)\} \end{align}
and denote by $ \C_{\infty}^{\rm qt} := \{ w^{\rm qt}|\; w\in \C_{\infty}\}$
the set of quantum points.  Since the $\Upphi_{i}$ are additive, $ \C_{\infty}^{\rm qt} $ is an $\F_{q}$ vector space (the sum of multi-points is carried out vectorially, according to the labelling in
(\ref{DefQPoint})).
For any $\upalpha\in A_{f}$, the formula
\[ \uprho^{\rm qt}_{\upalpha} (w^{\rm qt}) := \{ \uprho_{0,\upalpha}(w_{0}),\dots ,\uprho_{d-1,\upalpha}(w_{d-1})\}\]
defines an action of $A_{f}$ on $\C_{\infty}^{\rm qt}$ making the latter
an $A_{f}$ module.
Indeed, for any $\upalpha\in A_{f}$ and $ w^{\rm qt}\in \C_{\infty}^{\rm qt}$,  
$\uprho^{\rm qt}_{\upalpha} (w^{\rm qt})\in \C_{\infty}^{\rm qt}$: this follows immediately from the definition (\ref{DefQPoint}) of a quantum point and the fact
that the $\Upphi_{i}$ are morphisms of Drinfeld modules.
We refer to the $A_{f}$ module $\C_{\infty}^{\rm qt}$ as the {\bf {\em sign normalized quantum Drinfeld module}}
or {\bf {\em quantum Hayes module}} associated to $f$, denoting
it 
\[  \uprho^{\rm qt}= \uprho^{\rm qt}_{f} :=\{\uprho_{0},\dots ,\uprho_{d-1}\}  .\]

The points of $\uprho^{\rm qt}$ may be obtained as the multivalues of the quantum exponential function with a slightly different normalization.
 The need for a new normalization arises since $\Upphi_{l+1}\circ e_{0} \not=e_{d-1-l}$, contrary to what one might expect: see for example Proposition 4.9.4 and Corollary 4.9.5 in \S 4.9 of \cite{Goss}.
 To see what this normalization should be, we note that if we write 
 \begin{align}\label{derivativeofPhi} D_{l+1} & = \prod_{0\not =\alpha\in \mathbb{F}_{q}\langle fT, ..., fT^{d-1-l}\rangle} e_{0}(\upxi_{0}\upalpha )\nonumber   = \prod_{0\not =\alpha\in \mathfrak{a}_{d-1-l}/\mathfrak{a}_{0}} e_{0}(\upxi_{0}\upalpha )   = -
 \Upphi_{l+1}'(x).\nonumber \end{align}
 then
 \[  \upxi_{0}^{-1}\upxi_{d-1-l} (-D_{l+1})^{-1} \circ \Upphi_{l+1}\circ e_{0}\circ \upxi_{0}\upxi_{d-1-l}^{-1} = e_{d-1-l}  \]
 since both sides have divisor $\Uplambda_{d-1-l}=\upxi_{d-1-l}\mathfrak{a}_{d-1-l}$ with derivatives $\equiv 1$. Write
 \begin{align}\label{expqtnormalized} \widetilde{\exp}^{\rm qt}(z) := \{ e_{0}(z), \widetilde{e}_{1}(z) ,\dots ,  \widetilde{e}_{d-1}(z)\}, \end{align}
 where 
 \begin{align*} \widetilde{e}_{d-1-l} = \upxi_{0}\upxi_{d-1-l}^{-1} D_{l+1}\circ e_{d-1-l}\circ\upxi_{0}^{-1}\upxi_{d-1-l}.\end{align*}
Then
 for any $z\in \C_{\infty}$, $ \widetilde{\exp}^{\rm qt}(z)$ is a quantum point in the sense defined above, since for all $l$, $\Upphi_{l+1}\circ e_{0}(z)=\widetilde{e}_{d-1-l}(z)$.

   \section{Relative and Absolute Ray Class Fields}\label{RayClassFieldSection}
   


   
In this section we give the definitions of the
 ray class fields which will be the focus of this paper.
These are essentially $S$-definitions in the style of Rosen \cite{Ros}, \cite{Ros2},
   first appearing in \cite{Per}, \cite{Auer} (in their wide versions), but extended to non-Dedekind orders here. We recall that $S$-definitions
   are ring-specific and hence differ from the more general field-specific definitions that one encounters in texts such as \cite{Neu}, \cite{RamVal} (which in the function field setting
  usually give infinite ray class groups).  
  
    The necessity of defining ray class fields in the general setting of orders arises from the occurrence in the ray class group, associated to a modulus $\mathfrak{M}\subset\mathcal{O}_{K}$,  of nontrivial elements coming from the unit group $\mathcal{O}_{K}^{\times}$.
The structure of our class field generation results requires that there be no such elements, and in order to kill them, we must pass to orders of $\mathcal{O}_{K}$ and $A_{\infty_{1}}$ in which the problematic units no longer appear.   The ray class field thus
generated will be a finite extension of the usual ray class field associated to $\mathfrak{M}$.  This so called {\it unit ray class field} may be defined, {\it mutatis mutandis}, in the setting
of number fields.  See  \S \ref{CharZeroConj}.
    
 
Fix a nonconstant unit $f\in \mathcal{O}_{K}^{\times}$ with associated order $A_{f}$, defined in (\ref{forder}).  Define
 \[ \mathcal{O}_{f} = A[f,f^{-1}]= \F_{q} [T, f, f^{-1}] \subset \mathcal{O}_{K}.\]

 Denote by $\mathfrak{c}=\mathfrak{c}_{f}, \mathfrak{C}=\mathfrak{C}_{f}$ the conductors of $A_{f}\subset A_{\infty_{1}}$, $\mathcal{O}_{f}\subset \mathcal{O}_{K}$: thus, $\mathfrak{c}$
 is the largest $A_{\infty_{1}}$ ideal contained in $A_{f}$ and $\mathfrak{C}$
 is the largest $\mathcal{O}_{K}$ ideal contained in $\mathcal{O}_{f}$.

\begin{prop} $  \mathcal{O}_{f} \cap A_{\infty_{1}} = A_{f}$ and $ \mathfrak{C}\cap  A_{\infty_{1}} =\mathfrak{c} $.
\end{prop}

\begin{proof} Since $A_{f}\subset \mathcal{O}_{f}$, we have $ \mathcal{O}_{f} \cap A_{\infty_{1}} \supset A_{f}$.  Using the quadratic relations for $f$, $f^{-1}$, we may write
every element of $ \mathcal{O}_{f}$ in the form 
$ a_{-1}(T)f^{-1} +a_{0}(T) + a_{1}(T) f$, $a_{i}(T)\in A, \;\; i=-1,0,1$.
If such an element is also in $A_{\infty_{1}}$, it is regular at $\infty_{2}$, hence we must have $a_{-1}(T)=a_{0}(T)=0$, and then, using the quadratic relation
of $f$, $a_{1}(T)f\in A_{f}$.  Therefore, $ \mathcal{O}_{f} \cap A_{\infty_{1}} \subset A_{f}$ i.e.\  $ \mathcal{O}_{f} \cap A_{\infty_{1}} =A_{f}$. The $A_{\infty_{1}}$ ideal $ \mathfrak{C}\cap  A_{\infty_{1}}$ is contained
(by the equality we have just established) 
in $A_{f}$, hence $ \mathfrak{C}\cap  A_{\infty_{1}} \subset \mathfrak{c} $.  On the other hand, if $x\in \mathfrak{c}\subset  A_{f}\subset \mathcal{O}_{f}$,  and since $T\in \mathcal{O}_{f}$, we have $Tx\in \mathcal{O}_{f}$.   Trivially, $fx\in \mathcal{O}_{f}$. Therefore, $\mathcal{O}_{K}x\subset \mathcal{O}_{f}$, $x\in\mathfrak{C}$ and $\mathfrak{C}\cap  A_{\infty_{1}} \supset \mathfrak{c} $.
\end{proof}

The Hilbert class field
$ H_{A_{f}}$ of the order $A_{f}$ is defined as the field of invariants of the Drinfeld module $\uprho^{\mathfrak{a}}$,  where $\mathfrak{a}\subset A_{f}$ 
is any invertible ideal.
Alternatively, it is the smallest field of definition of $\uprho^{\mathfrak{a}}$.  
  See Theorem 6.6 and \S 8 of \cite{Hayes0}.  
Let $ {\sf I}^{\ast}_{A_{f}}$ be the group of invertible $A_{f}$ ideals, ${\sf P}_{A_{f}} $ the subgroup of principal ideals.  The ideal class group of $A_{f}$ is then
$ {\sf Cl}_{A_{f}} = {\sf I}^{\ast}_{A_{f}} / {\sf P}_{A_{f}} $;
it may be identified  with the Galois group of the extension
$ H_{A_{f}}/K  $.
See Theorem 8.10 of \cite{Hayes0}.

We may represent the elements of ${\sf Cl}_{A_{f}}$ using ideals in the $A_{\infty_{1}}$, as follows. Let
\[  {\sf I}^{\mathfrak{c}}_{A_{\infty_{1}}} = \big\{ \mathfrak{a} \text{ an } A_{\infty_{1}}\text{-ideal}  \; :\;\; (\mathfrak{a},\mathfrak{c})=1\big\}\supset {\sf P}^{\mathfrak{c}}_{A_{\infty_{1}}, f} = \big\langle a A_{\infty_{1}}\; : \;\; a\in A_{f},\; (a,\mathfrak{c})=1\big\rangle ,  \]
where $\langle X\rangle=$ the group generated by the set $X$.

\begin{prop}\label{AltOrderClassGpForm}
$ {\sf Cl}_{A_{f}} \cong {\sf I}^{\mathfrak{c}}_{A_{\infty_{1}}}  / {\sf P}^{\mathfrak{c}}_{A_{\infty_{1}}, f}. $
\end{prop}

\begin{proof} See Theorem 8.10  of \cite{Hayes0}.\end{proof}

\begin{note} For {\it any} non-zero ideal $\mathfrak{n}\subset A_{\infty_{1}}$, every class in ${\sf Cl}_{A_{\infty_{1}}}$ contains a representative relatively prime to $\mathfrak{n}$: see Lemma 1.3 of \cite{DGVI}. Thus
\[ {\sf Cl}_{A_{\infty_{1}}} \cong  {\sf I}^{\mathfrak{c}}_{A_{\infty_{1}}} / {\sf P}^{\mathfrak{c}}_{A_{\infty_{1}}},\quad   
{\sf P}^{\mathfrak{c}}_{A_{\infty_{1}}} =  \big\langle a A_{\infty_{1}}\; : \;\;  a\in A_{\infty_{1}},\; (a,\mathfrak{c})=1\big\rangle . \]
In particular,  $ {\sf P}^{\mathfrak{c}}_{A_{\infty_{1}}, f} $ is, in general, a proper subgroup of ${\sf P}^{\mathfrak{c}}_{A_{\infty_{1}}}$ and so, as expected 
(c.f.\ Corollary 1.4 of \cite{DGVI}), ${\sf Cl}_{A_{\infty_{1}}}$ is a quotient of ${\sf Cl}_{A_{f}} $.
\end{note}

We now give a description of $H_{A_{f}}$ using class field theory.   Let 
$C_{K}= \I_{K}/K^{\times}$ be the id\`{e}le class group.  Define
\[  \I_{A_{f}} := K^{\times}_{\infty_{1}} \cdot  \prod_{\mathfrak{p}\subset A_{\infty_{1}}} (A_{f})_{\mathfrak{p}}^{\times},  \]
where $(A_{f})_{\mathfrak{p}} $ is the completion of $A_{f}$ in $K_{\mathfrak{p}}$.  Write
$  C_{A_{f}}$ $ :=$ $  \left( \I_{A_{f}}\cdot K^{\times}\right) /K^{\times}   <C_{K}$.

\begin{theo}\label{CFTHCF} The subgroup $C_{A_{f}}< C_{K}$ is of finite index, and the map of id\`{e}les to ideal classes induces a canonical isomorphism
\[ C_{K}/ C_{A_{f}} \cong {\sf Cl}_{A_{f}}. \]
\end{theo}

\begin{proof} We will show that $C_{K}/ C_{A_{f}} \cong {\sf I}^{\mathfrak{c}} _{A_{\infty_{1}}}/{\sf P}^{\mathfrak{c}} _{A_{\infty_{1},f}}$.  Define first 
\[\I^{A_{f}} = K^{\times}_{\infty_{1}} \cdot \prod_{\mathfrak{p}| \mathfrak{c}} (A_{f})^{\times}_{\mathfrak{p}} \cdot \prod'_{\infty_{1}\not=\mathfrak{p}\;\nmid \;\mathfrak{c}}  K^{\times}_{\mathfrak{p}} \]
where the final product is restricted over the $(A_{f})^{\times}_{\mathfrak{p}} $ for $\mathfrak{p}\nmid \mathfrak{c}$ (note, in any event, that $(A_{f})^{\times}_{\mathfrak{p}}=(A_{\infty_{1}})^{\times}_{\mathfrak{p}}$ under the hypothesis
$\mathfrak{p}\nmid \mathfrak{c}$).  We first show that 
$\I_{K} =\I^{A_{f}}   \cdot K^{\times}$.
Indeed, for $\upalpha = (\upalpha_{\mathfrak{p}}) \in \I_{K}$, by the Approximation Theorem, we may write $\upalpha=a \upalpha'$ where $a\in K^{\times}$ 
and for all $\mathfrak{p}|\mathfrak{c}$, $\upalpha_{\mathfrak{p}}'\in (A_{\infty_{1}})^{\times}_{\mathfrak{p}}$.  
Now if $c\in \mathfrak{c}$, $\upalpha '' =c\upalpha'\in\I^{A_{f}}$, hence $\upalpha = \upalpha''ac^{-1}\in  \I^{A_{f}}   \cdot K^{\times}$.
With this identification, we may write
\begin{align}\label{altCK1} C_{K} = \frac{\I^{A_{f}}   \cdot K^{\times}}{ K^{\times}} \cong \frac{\I^{A_{f}} }{ \I^{A_{f}}   \cap K^{\times}} .\end{align}
Now the map
$ \I^{A_{f}}  \longrightarrow {\sf I}^{\mathfrak{c}}_{A_{\infty_{1}}}$,  $\upalpha\longmapsto (\upalpha ):= \prod_{\mathfrak{p}\not=\infty_{1}} \mathfrak{p}^{v_{\mathfrak{p}} (\upalpha_{\mathfrak{p}})} $, 
precomposed with the isomorphism (\ref{altCK1}),
induces an epimorphism
$  C_{K}\longrightarrow {\sf I}^{\mathfrak{c}} _{A_{\infty_{1}}}/{\sf P}^{\mathfrak{c}} _{A_{\infty_{1},f}} $, 
since $\I^{A_{f}}  \cap K^{\times}$ is sent to ${\sf P}^{\mathfrak{c}}_{A_{\infty_{1},f}} $. Note
$C_{A_{f}}$ is contained in the kernel: we show it is the kernel.  If $ \upalpha \in\I^{A_{f}} \mod\I^{A_{f}}   \cap K^{\times} $ is in the kernel, then 
 $(\upalpha ) = (a)\in {\sf P}^{\mathfrak{c}}_{A_{\infty_{1},f}} $.  Note then that $a^{-1}\upalpha\in \I_{A_{f}}$: indeed, since $(a,\mathfrak{c})=1$ and either $a$ or $a^{-1}\in A_{f}$, 
 $a\in (A_{f})_{\mathfrak{p}}^{\times}$ for all $\mathfrak{p}|\mathfrak{c}$ -- so multiplying $\upalpha$ by $a^{-1}$ conserves that property -- and the remaining coordinates of $a^{-1}\upalpha$
 must be units.  However, $ \I_{A_{f}}$
 is contained in the image of $C_{A_{f}}$ via the isomorphism  (\ref{altCK1}).  Therefore,
 \[ \upalpha \equiv a^{-1}\upalpha  \mod\I^{A_{f}}   \cap K^{\times} \;\;  \in\;\; \I _{A_{f}}  \mod\I^{A_{f}} \cap K^{\times}\subset \text{image of } C_{A_{f}} \text{ via  (\ref{altCK1})}. \]
 By Proposition \ref{AltOrderClassGpForm} we are done.
 \end{proof}

\begin{coro} $H_{A_{f}}$ corresponds to $C_{A_{f}}$ via class field theory.
\end{coro}

\begin{proof} By Theorem 8.10 of \cite{Hayes0}, $H_{A_{f}}$ corresponds to the $\mathfrak{c}$-ideal class group  $  {\sf I}^{\mathfrak{c}}_{A_{\infty_{1}}}/
{\sf P}^{\mathfrak{c}}_{A_{\infty_{1}}}$ $\cong{\sf Cl}_{A_{f}}$ via the ideal-theoretic version of Class Field Theory.  By Theorem \ref{CFTHCF}, $H_{A_{f}}$ 
corresponds to $C_{A_{f}}$ via the id\`{e}le-theoretic formulation of Class Field Theory. 
\end{proof}

As a consequence we have the tower of Hilbert class fields
$H_{A_{f}}
 \supset
H_{A_{\infty_{1}}} \supset H_{\mathcal{O}_{K}} $.

Now we turn to $\mathcal{O}_{f}$: here, we define the Hilbert class field $H_{\mathcal{O}_{f}}$ via class field theory as the abelian extension associated to
$C_{\mathcal{O}_{f}} = ( \I_{\mathcal{O}_{f}} \cdot K^{\times})/K^{\times} $,
where
\[   \I_{\mathcal{O}_{f}} = K^{\times}_{\infty_{1}}\cdot K^{\times}_{\infty_{2}}\cdot \prod_{\mathfrak{p}\not=\infty_{1},\infty_{2}}  (\mathcal{O}_{f})^{\times}_{\mathfrak{p}} .\]  The field $H_{\mathcal{O}_{f}}$
fits into the following diamond of extensions
\[ 
\begin{diagram}
H_{A_{f}} &  & \\
\vLine  & \rdLine & \\
H_{A_{\infty_{1}}}  && H_{\mathcal{O}_{f}} \\
  & \rdLine &\vLine \\
& &H_{\mathcal{O}_{K}} . 
\end{diagram}
\]
In analogy with the previous development, 
$  {\rm Gal}(H_{\mathcal{O}_{f}}/K) \cong {\sf Cl}(\mathcal{O}_{f} ) \cong {\sf I}^{\mathfrak{C}}_{\mathcal{O}_{K}}/ {\sf P}_{\mathcal{O}_{K},f} ^{\mathfrak{C}} $,
with
${\sf I}^{\mathfrak{C}}_{\mathcal{O}_{K}} = \{ \mathfrak{A}\text{ an } \mathcal{O}_{K}\text{-ideal}   :\; (\mathfrak{A},\mathfrak{C})=1\}\supset
 {\sf P}^{\mathfrak{C}} _{\mathcal{O}_{K},f}= \langle a \mathcal{O}_{K} : \; a\in\mathcal{O}_{f},\; (a,\mathfrak{C})=1\rangle  $. 

In \cite{DGIII}, \cite{DGV}, it is shown that for any $\mathfrak{a}\in {\sf Cl}_{A_{\infty_{1}}}$, $ H_{A_{\infty_{1}}} = K(j(\mathfrak{a}))$,where $j(\mathfrak{a})$ is the modular invariant of the ideal class $\mathfrak{a}$
and moreover,
\[  H_{\mathcal{O}_{K}} = K\left( \prod_{\upalpha\in j^{\rm qt}(f_{0})} \upalpha \right),\]
where $f_{0}$ is a fundamental unit.  We have the identification \cite{DGIII}:
\begin{align}\label{IDjqtWithJideals} j^{\rm qt}(f_{0}) = \{ j(\mathfrak{a}_{i}  )\}_{i=0,\dots ,d_{0}-1} ,\quad \mathfrak{a}_{i} = (f_{0}, f_{0}T,\dots , f_{0}T^{i}).\end{align}
The analogs of these statements for $H_{A_{f}}$ respectively $H_{\mathcal{O}_{f}}$ are proved in \S\S 5, 6  of \cite{DGVI}.
In particular, the $A_{f}$ analogs of the ideals in (\ref{IDjqtWithJideals}), which were defined in (\ref{aidef}) and are also denoted  $\mathfrak{a}_{i}  \subset A_{f}$, play a fundamental role in what follows.

Fix once and for all an ideal $\mathfrak{M}\subset \mathcal{O}_{K}$ and denote as follows the contractions
$ \mathfrak{M}_{f}:=\mathfrak{M}\cap \mathcal{O}_{f}$, $\mathfrak{m} := A_{\infty_{1}}\cap \mathfrak{M}$,   $\mathfrak{m}_{f} := A_{f}\cap \mathfrak{M}$,
all referred to, as usual, as moduli.   Note then when $f=f_{0}$ is a fundamental unit, then $\mathfrak{M}_{f_{0}}=\mathfrak{M} $ and $\mathfrak{m}_{f_{0}}=\mathfrak{m} $.

         \begin{lemm}\label{coprimalityofideals}   Let $\mathfrak{a}_{i}\subset A_{f}$,  $i=0,\dots, d-1$, be as in $(\ref{aidef}) $.  Then  $\mathfrak{a}_{i}$ is relatively prime to $\mathfrak{m}_{f}$ and $\mathfrak{c}$.
   \end{lemm}
   
   \begin{proof} We begin with the coprimality with respect to $\mathfrak{m}_{f}$.  It will be enough to show that $\mathfrak{a}_{0}=fA_{f}$ is relatively prime to $\mathfrak{m}_{f}$, since $\mathfrak{a}_{i}$ is the $(d-i)$th power of the prime ideal $\mathfrak{a}_{d-1}$.
  Suppose on the contrary
   that $(\mathfrak{a}_{0},\mathfrak{m}_{f})\not=1$.  Since $A_{f}/\mathfrak{m}_{f}$ is finite, its elements are either zero divisors or units, and if $f$ were to define a unit it
   would be relatively prime to $\mathfrak{m}_{f}$.  So $f$ must define a nontrivial zero divisor ($f\not\in\mathfrak{m}_{f}$ since $f\not\in\mathfrak{M}_{f}$ as the latter is a nontrivial ideal).  As such, there exists $a\in A_{f}$ so that $af\in \mathfrak{m}_{f}\subset\mathfrak{M}_{f}$. Since $f$ is an $\mathcal{O}_{f}$ unit, $a \in \mathfrak{M}_{f}$, and so  $a\in\mathfrak{m}_{f}$.  This contradicts $f$ being a nontrivial zero divisor,
   so this takes care of coprimality with respect to $\mathfrak{m}_{f}$.
   Now the ideal $\mathfrak{a}_{d-1}$ is maximal, hence prime, in $A_{f}$, and by Proposition \ref{PowerProp}, $ \mathfrak{a}_{d-1}^{d} =\mathfrak{a}_{0}= fA_{f}$,   so $\mathfrak{a}_{0}$ has a prime decomposition consisting of a power of a single prime  $\mathfrak{a}_{d-1}$.  In particular, the latter shows that $\mathfrak{a}_{d-1}$ is invertible, with inverse $\mathfrak{a}_{d-1}^{-1} = f^{-1} \mathfrak{a}_{d-1}^{d-1}$.  By Theorem 6.1 of \cite{Conrad1}, this implies that $\mathfrak{a}_{d-1}$ is relatively prime to $\mathfrak{c}$, and thus, so are all of its powers.
       \end{proof}

       As remarked in the proof of Lemma \ref{coprimalityofideals}, the ideal $\mathfrak{a}_{d-1}$ is maximal hence prime, and all elements of the group $\langle \mathfrak{a}_{d-1}\rangle$ are thus  powers of a single prime.  By the bijective correspondence between ideals relatively prime to $\mathfrak{c}$ in $A_{f}$ and $A_{\infty_{1}}$ (c.f.\ Theorem 1.1 of \cite{DGVI}), it follows
       that
      $  \mathfrak{m}_{\infty_{2}} =    \mathfrak{a}_{d-1} A_{\infty_{1}}    $
       where $\mathfrak{m}_{\infty_{2}}$ is the $A_{\infty_{1}}$ ideal of elements vanishing at $\infty_{2}$, which is explicitly generated by $f_{0}, f_{0}T,\dots , f_{0} T^{d_{0}-1} $ for $f_{0}$ 
       a fundamental unit of $\mathcal{O}_{K}$.

       \begin{prop}\label{ZProp}  Let $  \mathfrak{a}_{d-1}\in  {\sf Cl}_{A_{f}} $ be as above.  Then 
       $ \langle  \mathfrak{a}_{d-1}\rangle  = {\rm Ker}( {\sf Cl}_{A_{f}} \rightarrow {\sf Cl}_{\mathcal{O}_{f}}  )$. In particular, the Galois group
      $  Z={\rm Gal}(H_{A_{f}}/H_{\mathcal{O}_{f}})$ is canonically isomorphic to $ \langle \mathfrak{a}_{d-1}\rangle $ via Class Field Theory.
       \end{prop}
       
       \begin{proof} For $f=f_{0}$ a fundamental unit, this result was proved in Proposition 2 of  \cite{DGIII}; we will use the latter to prove the Proposition. Denote
       $ Z_{0} :=$ ${\rm Gal}(H_{A_{\infty_{1}}}/H_{\mathcal{O}_{K}})$  $\cong$ ${\rm Ker}( {\sf Cl}_{A_{\infty_{1}}} \rightarrow {\sf Cl}_{\mathcal{O}_{K}}  )$ $= \langle \mathfrak{a}_{d_{0}-1}\rangle$, where
         $  \mathfrak{a}_{d_{0}-1} = (f_{0}, f_{0}T,\dots , f_{0}T^{d_{0}-1} ) $.
         In what follows, we simply write $Z={\rm Ker}( {\sf Cl}_{A_{f}} \rightarrow {\sf Cl}_{\mathcal{O}_{f}}  )$ and $Z_{0}={\rm Ker}( {\sf Cl}_{A_{\infty_{1}}} \rightarrow {\sf Cl}_{\mathcal{O}_{K}}  ) =
         \langle
         \mathfrak{a}_{d_{0}-1}\rangle$.
       We have the following commutative diagram of short exact sequences:
       \[
       \begin{diagram}
   1 &\rTo &  Z & \rInto &  {\sf Cl}_{A_{f}} \cong {\sf I}_{A_{f}}^{\mathfrak{c}} / {\sf P}_{A_{f}}^{\mathfrak{c}} & \rOnto^{\Upphi} &    
       {\sf Cl}_{\mathcal{O}_{f}} \cong {\sf I}_{\mathcal{O}_{f}}^{\mathfrak{C}} / {\sf P}_{\mathcal{O}_{f}}^{\mathfrak{C}} &\rTo &1 \\ 
 & &   \dTo & &   \dTo_{\Uppsi_{A}} & & \dTo_{\Uppsi_{\mathcal{O}}} & & \\
  1 & \rTo&  Z_{0}=  \langle
         \mathfrak{a}_{d_{0}-1}\rangle &\rInto &  {\sf Cl}_{A_{\infty_{1}}} \cong  {\sf I}_{A_{\infty_{1}}} / {\sf P}_{A_{\infty_{1}}}   & \rOnto_{\Upphi_{0}} & {\sf Cl}_{\mathcal{O}_{K}} \cong  {\sf I}_{\mathcal{O}_{K}}/ {\sf P}_{\mathcal{O}_{K}}  & \rTo& 1,
        \end{diagram}\]
        in which the indicated identifications of class groups are those that were established earlier in this section.  The vertical maps $\Uppsi_{A}, \Uppsi_{\mathcal{O}}$ are induced by ideal expansion, e.g., $ \mathfrak{b}_{f} \longmapsto \mathfrak{b}:=\mathfrak{b}_{f}A_{\infty_{1}}$, 
        which
        are bijections on ideals prime to $\mathfrak{c}$ (see Theorem 1.1 of \cite{DGVI}).  
        We have the inclusion 
       $ \langle \mathfrak{a}_{d-1}\rangle\subset Z$ since $ \mathfrak{a}_{d-1}$ contains the $\mathcal{O}_{f}$-unit $f$.  Suppose there is an ideal class $\mathfrak{b}_{f}\in Z\setminus \langle \mathfrak{a}_{d-1}\rangle$;
       without loss of generality, we may assume that $\mathfrak{b}_{f}$ is prime to $\mathfrak{a}_{d-1}$.    Then by Theorems  1.3 and  1.4 of \cite{DGVI},
        $\mathfrak{b}_{f}$ may be written as a product of primes  that are prime to the conductor.  We claim that 
        $\mathfrak{a}_{d-1}A_{\infty_{1}} \subset \mathfrak{a}_{d_{0}-1}$.   Indeed, the generators of $\mathfrak{a}_{d-1}$ are of the form $fT^{j} $, $j=d_{0}m+i\in \{ 0,\dots ,d-1= nd_{0}-1\}$; writing the exponent of $T$ as 
        $j = md_{0} + i$ for $m<n$ and $i<d_{0}$, we
    have
        $fT^{j} = f_{0}^{n}T^{j}  = ( f_{0} T^{d_{0}}) ^{m}\cdot f_{0}^{n-m} T^{i} $.
        Using the quadratic relation for $f_{0}$, $f_{0}T^{d_{0}} \in \mathfrak{a}_{d_{0}-1}$, and this proves the claim.  But by
         Theorem 1.1 of \cite{DGVI}, the expansion $\mathfrak{a}_{d-1}A_{\infty_{1}}$ is prime, hence  $\mathfrak{a}_{d-1}A_{\infty_{1}}=\mathfrak{a}_{d_{0}-1}$.
        In particular,  the expansion  $\mathfrak{b}=\mathfrak{b}_{f}A_{\infty_{1}}$ is therefore a product of primes that do not involve $\mathfrak{a}_{d_{0}-1}$.  By the commutativity
        of the diagram, $\mathfrak{b}\in Z_{0} = \langle \mathfrak{a}_{d_{0}-1}\rangle$, contradiction.
        \end{proof}

 Every ideal $\mathfrak{a}\subset A_{f}$ relatively prime to $\mathfrak{c}$ is invertible (see Theorem 1.3 of \cite{DGVI}), therefore 
${\sf I}^{\mathfrak{c}}_{A_{f}} : =\big\{ \mathfrak{a} \text{ an }A_{f}\text{-fractional ideal}  : \; (\mathfrak{a},\mathfrak{c})=1\big\}$
forms a subgroup of $ {\sf I}^{\ast}_{A_{f}} $.  Every class in ${\sf Cl}_{A_{f}} $ contains a representative in $ {\sf I}^{\mathfrak{c}}_{A_{f}} $ (Lemma
1.3 of \cite{DGVI}), so
\begin{align}\label{condclassgroup} {\sf Cl}_{A_{f}}  
= \bigg( {\sf I}^{\mathfrak{c}}_{A_{f}}\cdot  {\sf P}_{A_{f}} \bigg)\bigg/{\sf P}_{A_{f}}  \cong  {\sf I}^{\mathfrak{c}}_{A_{f}}/   {\sf P}_{A_{f}}^{\mathfrak{c}}, \end{align}
where
$ {\sf P}_{A_{f}}^{\mathfrak{c}} :=  {\sf P}_{A_{f}}\cap {\sf I}^{\mathfrak{c}}_{A_{f}} = \big\langle aA_{f}\; : \;\; a\in A_{f},\; (a,\mathfrak{c}) =1\big\rangle $.
We may now define the {\bf {\em wide ray class group}} of $\mathfrak{m}_{f}$ as $ {\sf Cl}_{\mathfrak{m}_{f}}^{0} := {\sf I}^{0}_{\mathfrak{m}_{f}}/{\sf P}^{0}_{\mathfrak{m}_{f}}$,
   where $   {\sf I}^{0}_{\mathfrak{m}_{f}} = \{ \mathfrak{a} \in {\sf I}_{A_{f}}^{\mathfrak{c}}   :\; (\mathfrak{a},\mathfrak{m}_{f})=1\}$ and
${\sf P}^{0}_{\mathfrak{m}_{f}}=\langle bA_{f} \in {\sf I}^{0}_{\mathfrak{m}_{f}}    :\;\ b\in A_{f}, \;\; (b,\mathfrak{c})=1,\;\;  b\equiv 1 \mod\mathfrak{m}_{f} \rangle$.
 Thus, in this definition, we are imposing from the outset primeness to $\mathfrak{c}$, in the spirit of the identification (\ref{condclassgroup}).

  The {\bf {\em wide ray class field}} is defined as
   the unique abelian extension $K^{0}_{\mathfrak{m}_{f}}/K$ whose norm group corresponds to $ {\sf Cl}_{\mathfrak{m}_{f}}^{0}$ via reciprocity.
   More exactly, suppose that in $A_{\infty_{1}}$,
$  \mathfrak{m} = \prod \mathfrak{p}^{n_{p}} $.
   Then define
   \[  \I^{0}_{\mathfrak{m}_{f}} = K_{\infty_{1}}^{\times} \cdot \prod U_{\mathfrak{p},f}^{(n_{\mathfrak{p}})},\quad U_{\mathfrak{p},f}^{(n_{\mathfrak{p}})} := 
   U_{\mathfrak{p}}^{(n_{\mathfrak{p}})} \cap (A_{f})_{\mathfrak{p}} , \]
   where $U_{\mathfrak{p}}^{(n)}$ is the $n$th higher unit group in the local field $K_{\mathfrak{p}}$. 
If we define
$  C^{0}_{\mathfrak{m}_{f}} : = (\I^{0}_{\mathfrak{m}_{f}}\cdot K^{\times})/K^{\times}$, 
 we have the following analog of Theorem \ref{CFTHCF}:
   
   \begin{theo} $C^{0}_{\mathfrak{m}_{f}} <C_{K}$ is of finite index and 
   \[   C_{K}/ C^{0}_{\mathfrak{m}_{f}}  \cong   {\sf Cl}_{\mathfrak{m}_{f}}^{0}.\]
      \end{theo}
   
 \begin{proof} The proof is a straightforward modification of that of Theorem \ref{CFTHCF}.  Define
\[ \I^{\mathfrak{c}}_{\mathfrak{m}_{f}} = K^{\times}_{\infty_{1}} \cdot \prod_{\mathfrak{p}| \mathfrak{c}, \; \mathfrak{p}\nmid \mathfrak{m}} (A_{f})^{\times}_{\mathfrak{p}} \cdot  
\prod_{\mathfrak{p}|\mathfrak{m}}  U_{\mathfrak{p},f}^{(n_{\mathfrak{p}})}\cdot
\prod'_{\infty_{1}\not=\mathfrak{p}\;\nmid \;\mathfrak{c},\mathfrak{m}}  K^{\times}_{\mathfrak{p}} \]
where, as before, the last product is restricted over $(A_{f})^{\times}_{\mathfrak{p}} = (A_{\infty_{1}})_{\mathfrak{p}}^{\times}$.
Then 
$ \I_{K} =  \I^{\mathfrak{c}}_{\mathfrak{m}_{f}}  \cdot K^{\times} $ 
and we may define an epimorphism
$ C_{K}  \cong  \I^{\mathfrak{c}}_{\mathfrak{m}_{f}} /(\I^{\mathfrak{c}}_{\mathfrak{m}_{f}} \cap K^{\times} )\longrightarrow {\sf Cl}_{\mathfrak{m}_{f}}^{0}$, 
induced by 
\[   \I^{\mathfrak{c}}_{\mathfrak{m}_{f}} \ni \upalpha =( \upalpha_{\mathfrak{p}}) \longmapsto \prod \mathfrak{p}_{f}^{v_{\mathfrak{p}} (\upalpha_{\mathfrak{p}})}   \in  {\sf I}^{0}_{\mathfrak{m}_{f}}   \]
since the latter map takes $a \in \I^{\mathfrak{c}}_{\mathfrak{m}_{f}} \cap K^{\times}$ to $(a)$ which is relatively prime to $\mathfrak{c}$ and for which $a\equiv 1\mod \mathfrak{m}_{f}$.
The kernel of this map contains $C^{0}_{\mathfrak{m}_{f}}$ and if $\upalpha\in \I^{\mathfrak{c}}_{\mathfrak{m}_{f}} $ is contained in the kernel, then there is $a\in {\sf P}^{0}_{\mathfrak{m}_{f}}$
with $(\upalpha ) = (a)$, whereby 
\[  \upalpha \equiv \upalpha a^{-1} \mod \I^{\mathfrak{c}}_{\mathfrak{m}_{f}} \cap K^{\times}\;\; \in\;\; \I^{0}_{\mathfrak{m}_{f}} \mod \I^{\mathfrak{c}}_{\mathfrak{m}_{f}} \cap K^{\times}  ,  \]
which is contained in the image of $C^{0}_{\mathfrak{m}_{f}}$ in $ \I^{\mathfrak{c}}_{\mathfrak{m}_{f}}\mod \I^{\mathfrak{c}}_{\mathfrak{m}_{f}} \cap K^{\times} $.
   \end{proof}
  If we denote 
$  (A_{f}/\mathfrak{m}_{f})_{\mathfrak{c}}^{\times} := \{ a + \mathfrak{m}_{f}\in (A_{f}/\mathfrak{m})^{\times} : \; (a,\mathfrak{c})=1\}  $,
   note that we have isomorphisms
   ${\rm Gal}(K^{0}_{\mathfrak{m}_{f}}/H_{A_{f}})\cong {\rm Ker}\big(  {\sf Cl}_{\mathfrak{m}_{f}}^{0}\rightarrow {\sf Cl}_{A_{f}} \big) \cong (A_{f}/\mathfrak{m}_{f})_{\mathfrak{c}}^{\times}/\F_{q}^{\times}$,
    the last of which follows directly from the definitions of the class groups $ {\sf Cl}_{\mathfrak{m}_{f}}^{0}$ and ${\sf Cl}_{A_{f}} $.


The narrow versions of the above notions are defined in order to allow the sign group $\F_{q}^{\times}$ to have a Galois interpretation as well.  
Thus, if we denote by 
${\sf P}_{\mathfrak{m}_{f}}{}  =\{   bA_{f}\in {\sf P}_{\mathfrak{m}_{f}}^{0} |\;  b\text{ is monic}
 \} $,
then the {\bf {\em narrow ray class group}} is defined
$   {\sf Cl}_{\mathfrak{m}_{f}}{}  := {\sf I}_{\mathfrak{m}_{f}}/{\sf P}_{\mathfrak{m}_{f}}{} \twoheadrightarrow  {\sf Cl}_{\mathfrak{m}_{f}}^{0}$.
We will also call this simply the {\it ray class group}, since it is the one of principal interest (following the convention of \cite{Neu}).
Note that by our choice of sign function in \S  \ref{QDMSection},
$b$ is monic if and only if ${\rm sgn}(b)=1$.

\begin{note}  Every class in  ${\sf Cl}_{\mathfrak{m}_{f}}{} $ contains an integral element.  Indeed, take $\mathfrak{a}\in {\sf I}_{\mathfrak{m}_{f}}$:  $\mathfrak{a}$ is prime to $\mathfrak{c}$, so
the inverse $\mathfrak{a}^{-1}$ exists. Moreover, since $\mathfrak{a}$ is prime to $\mathfrak{m}_{f}$, so is $\mathfrak{a}^{-1}$ 
\end{note}

 We define again by reciprocity 
   the {\bf {\em narrow ray class field}} or simply {\bf {\em ray class field}} as
 $ K_{\mathfrak{m}_{f}}{}/K$, whose norm group $C{}_{\mathfrak{m}_{f}}$ is obtained from that of $K_{\mathfrak{m}_{f}}^{0}$ by
   replacing the factor of $K_{\infty_{1}}^{\times}$ in $\I^{0}_{\mathfrak{m}_{f}}$ by $ (K_{\infty_{1}}^{\times})_{+} = \text{the subgroup of sign 1 elements in } K_{\infty_{1}}^{\times}$:
   \begin{align}\label{normgroupnarrowrcf}
   \I_{\mathfrak{m}_{f}} =(K_{\infty_{1}}^{\times})_{+} \cdot \prod U_{\mathfrak{p},f}^{(n_{\mathfrak{p}})},\quad U_{\mathfrak{p},f}^{(n_{\mathfrak{p}})} := U_{\mathfrak{p}}^{(n_{\mathfrak{p}})} \cap (A_{f})_{\mathfrak{p}} . 
   \end{align}
   By the order reversing property of reciprocity, we have 
$ K_{\mathfrak{m}_{f}}{}\supset K_{\mathfrak{m}_{f}}^{0}$, 
   and 
   \begin{align}\label{WID} {\rm Gal}(K_{\mathfrak{m}_{f}}{}/H_{A_{f}})\cong (A_{f}/\mathfrak{m}_{f})_{\mathfrak{c}}^{\times},\quad {\rm Gal}(K_{\mathfrak{m}_{f}}{}/K_{\mathfrak{m}_{f}}^{0})\cong \F_{q}^{\times}.\end{align}

   
   We define the counterparts of the above constructions for $\mathfrak{M}_{f}\subset \mathcal{O}_{f}$.  The wide ray class group is defined exactly as above:
$   {\sf Cl}_{\mathfrak{M}_{f}}^{0} := {\sf I}_{\mathfrak{M}_{f}}/{\sf P}^{0}_{\mathfrak{M}_{f}}$,
   where 
  $   {\sf I}_{\mathfrak{M}_{f}} = \{ \mathfrak{A}\in {\sf I}_{\mathcal{O}_{f}}^{\mathfrak{C}}|\; (\mathfrak{A},\mathfrak{M}_{f})=1\}$ and
  ${\sf P}_{\mathfrak{M}_{f}}^{0}=\langle b\mathcal{O}_{f} \in  {\sf I}_{\mathfrak{M}_{f}} |\; b\in \mathcal{O}_{f},\; (b,\mathfrak{C})=1,\;b\equiv 1 \mod\mathfrak{M}_{f} \rangle $.

The narrow version takes into account that $\mathcal{O}_{K}\supset \mathcal{O}_{f}$ is defined using the two places $\infty_{1},\infty_{2}$, and requires ``positivity'' at both.  That is, if $\upsigma\in {\rm Gal}(K/k)$
is the non-trivial element, we define
${\sf P}_{\mathfrak{M}_{f}}{} =\{   b\mathcal{O}_{f} \in {\sf P}_{\mathfrak{M}_{f}}^{0}|\;  
b,b^{\upsigma}\text{ are monic}  \}$
and then 
the narrow ray class group is defined
$   {\sf Cl}_{\mathfrak{M}_{f}}{}  := {\sf I}_{\mathfrak{M}_{f}}/{\sf P}_{\mathfrak{M}_{f}}{} \twoheadrightarrow  {\sf Cl}_{\mathfrak{M}_{f}}^{0}$.
These groups correspond to subgroups of the id\`{e}le class group 
$    C_{\mathfrak{M}_{f}}<C^{0}_{\mathfrak{M}_{f}} < C_{K} $
and  by reciprocity, we define the class fields 
$K_{\mathfrak{M}_{f}}^{0}\subset K_{\mathfrak{M}_{f}}{}$.  For $f=f_{0}$ fundamental, $\mathfrak{M}=\mathfrak{M}_{f_{0}}$ and the wide ray class field
$K_{\mathfrak{M}}^{0}$ is characterized as the maximal extension amongst
  abelian extensions $K_{\mathfrak{f}}/K$ completely split at $\infty_{1},\infty_{2}$ and with conductor $\mathfrak{f}\leq \mathfrak{M}$.

   Since $C^{0}_{\mathfrak{m}_{f}} < C^{0}_{\mathfrak{M}_{f}} $, we have
    $K_{\mathfrak{m}_{f}}^{0}\supset K_{\mathfrak{M}_{f}}^{0}$.
       Unfortunately, it is not the case that $K_{\mathfrak{m}_{f}}{}\supset K_{\mathfrak{M}_{f}}{}$; in fact there is not even a natural map relating ray class groups
      ${\sf Cl}_{\mathfrak{m}_{f}}{}$ and  ${\sf Cl}_{\mathfrak{M}_{f}}{}$.  This situation leads us to define intermediate ray class fields 
      \begin{align}\label{intermediateRCFs}  K_{\mathfrak{M}_{f}, 1}, \quad \text{resp.}\quad K_{\mathfrak{M}_{f} ,2}\end{align} which are ``narrow'' only along $\infty_{1}$, resp.\ $\infty_{2}$:
      defined by the corresponding groups ${\sf Cl}_{\mathfrak{M}_{f}, 1}:=  {\sf I}_{\mathfrak{M}_{f}}/{\sf P}_{\mathfrak{M}_{f}, 1} $, ${\sf Cl}_{\mathfrak{M}_{f},2}:=  {\sf I}_{\mathfrak{M}_{f}}/{\sf P}_{\mathfrak{M}_{f}, 2} $, where 
     $ {\sf P}_{\mathfrak{M}_{f}, 1}  =\langle   b\mathcal{O}_{f} \in {\sf P}^{0}_{\mathfrak{M}_{f}}| \; b\equiv 1 \mod\mathfrak{M}_{f} \text{ and } b\text{ is monic}  \rangle$, 
       ${\sf P}_{\mathfrak{M}_{f}, 2}  =\langle   b\mathcal{O}_{f} \in {\sf P}^{0}_{\mathfrak{M}_{f}}|\;\;  b\equiv 1 \mod\mathfrak{M}_{f}\text{ and } b^{\upsigma}\text{ is monic}  \rangle $.
     \begin{prop}\label{intermediateRCFsProp} We have the following diagram of ray class fields for $\mathfrak{M}_{f}$, with extensions
      labeled by Galois groups:
           \begin{align*}
   \begin{diagram}
   & & K_{\mathfrak{M}_{f}}{} & &  \\
   & \ldLine^{\F_{q}^{\times}\circlearrowright} & &   \rdLine^{\F_{q}^{\times}\circlearrowright}  &  \\ 
   K_{\mathfrak{M}_{f}, 1} & & & &   K_{\mathfrak{M}_{f},2} \\
     & \luLine_{\F_{q}^{\times}\circlearrowright} & &   \ruLine_{\F_{q}^{\times}\circlearrowright} &\\ 
     & & K_{\mathfrak{M}_{f}}^{0} & &  \\
      & & \dLine & & \\
     & & H_{\mathcal{O}_{f}} & & 
   \end{diagram}
   \end{align*}
   \end{prop}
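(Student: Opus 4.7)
The plan is to verify the diagram by deriving all field inclusions from the inclusions of the corresponding ${\sf P}$ subgroups via class field theory reciprocity, and then computing each labeled Galois group as a quotient of ray class groups. The chain
\[ {\sf P}^{\mathfrak{M}}_{\rm nar} \subset {\sf P}^{\mathfrak{M}}_{{\rm nar}_{i}} \subset {\sf P}^{\mathfrak{M}}_{0}, \quad i=1,2, \]
is immediate from the definitions. Reciprocity yields the reverse field inclusions $K^{\mathfrak{M}}_{\rm nar} \supset K^{\mathfrak{M}}_{{\rm nar}_{i}} \supset K^{\mathfrak{M}}_{0}$, together with the standard identifications
\[ {\rm Gal}(K^{\mathfrak{M}}_{{\rm nar}_{i}}/K^{\mathfrak{M}}_{0}) \cong {\sf P}^{\mathfrak{M}}_{0}/{\sf P}^{\mathfrak{M}}_{{\rm nar}_{i}}, \quad {\rm Gal}(K^{\mathfrak{M}}_{\rm nar}/K^{\mathfrak{M}}_{{\rm nar}_{i}}) \cong {\sf P}^{\mathfrak{M}}_{{\rm nar}_{i}}/{\sf P}^{\mathfrak{M}}_{\rm nar}. \]
The bottom containment $K^{\mathfrak{M}}_{0} \supset H_{\mathcal{O}_{K}}$ follows from the characterization of the wide ray class field $K^{\mathfrak{M}}_{0}$ as the maximal abelian extension split at $\infty_{1}$ and $\infty_{2}$ with conductor $\leq \mathfrak{M}$: since $H_{\mathcal{O}_{K}}$ has trivial conductor, it is such an extension.

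To attach the label $\F_{q}^{\times}$ to each of the four middle extensions, I construct sign maps. For $K^{\mathfrak{M}}_{{\rm nar}_{i}}/K^{\mathfrak{M}}_{0}$ the relevant map is
\[ {\sf P}^{\mathfrak{M}}_{0} \longrightarrow \F_{q}^{\times}, \quad b\mathcal{O}_{K} \longmapsto {\rm sgn}_{\infty_{i}}(b), \]
where $b$ is chosen with $b \equiv 1 \mod \mathfrak{M}$. Its kernel is ${\sf P}^{\mathfrak{M}}_{{\rm nar}_{i}}$ by definition, and surjectivity is straightforward (multiplying a generator by constants adjusts the sign freely while preserving congruence, provided $\mathfrak{M}$ is non-trivial). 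The analogous construction with ${\rm sgn}_{\infty_{j}}$, $j \neq i$, restricted to ${\sf P}^{\mathfrak{M}}_{{\rm nar}_{i}}$, handles the extension $K^{\mathfrak{M}}_{\rm nar}/K^{\mathfrak{M}}_{{\rm nar}_{i}}$ with kernel ${\sf P}^{\mathfrak{M}}_{\rm nar}$.

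The main obstacle is the well-definedness of these sign maps: two admissible representatives $b, b'$ of the same principal ideal differ by a unit $\epsilon = b'/b \in \mathcal{O}_{K}^{\times}$, and one must show ${\rm sgn}_{\infty_{?}}(\epsilon) = 1$ at the place used in the map. Writing $\epsilon = c f^{n}$ via the decomposition $\mathcal{O}_{K}^{\times} = \F_{q}^{\times} \times \langle f \rangle$, and using that $f$ is a fundamental unit (by the assumption made at the start of this section) together with ${\rm sgn}_{\infty_{1}}(f)=1$ (by the monicity of $a$ in (\ref{minpolyoff})), the constraints $\epsilon \equiv 1 \mod \mathfrak{M}$ and monicity at $\infty_{i}$ force the relevant sign at the complementary place to be trivial. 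Once this is in place, each labeled Galois group is identified with $\F_{q}^{\times}$ and the diagram is verified.
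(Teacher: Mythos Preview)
Your overall approach matches the paper's: identify each labeled Galois group with a quotient ${\sf P}^{\mathfrak{M}}_{0}/{\sf P}^{\mathfrak{M}}_{{\rm nar}_{i}}$ or ${\sf P}^{\mathfrak{M}}_{{\rm nar}_{i}}/{\sf P}^{\mathfrak{M}}_{\rm nar}$ via reciprocity, then realize that quotient as $\F_{q}^{\times}$ through a sign map. However, your surjectivity argument is wrong. You write that ``multiplying a generator by constants adjusts the sign freely while preserving congruence,'' but if $b\equiv 1\mod\mathfrak{M}$ and $c\in\F_{q}^{\times}$ with $c\neq 1$, then $cb\equiv c\not\equiv 1\mod\mathfrak{M}$ (since $c-1$ is a unit of $\mathcal{O}_{K}$, hence not in the proper ideal $\mathfrak{M}$). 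So scaling by constants destroys the congruence condition rather than preserving it. The paper's surjectivity argument is different: pick $m\in\mathfrak{M}$ with $|m|>1$ and ${\rm sgn}(m)=c$, and set $b=1+m$; then $b\equiv 1\mod\mathfrak{M}$ and ${\rm sgn}(b)=c$.

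Your treatment of well-definedness is also incomplete. For the map ${\sf P}^{\mathfrak{M}}_{0}\to\F_{q}^{\times}$, $b\mathcal{O}_{K}\mapsto{\rm sgn}_{\infty_{i}}(b)$, there is no monicity hypothesis available on the domain, so the phrase ``monicity at $\infty_{i}$'' cannot be invoked at that stage. And for the map on ${\sf P}^{\mathfrak{M}}_{{\rm nar}_{i}}$, writing $\epsilon=cf^{n}$ with $c=1$ (forced by monicity at $\infty_{1}$, say) still leaves ${\rm sgn}_{\infty_{2}}(f^{n})={\rm sgn}_{\infty_{2}}(f)^{n}$, which you have not shown equals $1$; recall $ff^{\ast}=-b$ with $b\in\F_{q}^{\times}$, so ${\rm sgn}_{\infty_{2}}(f)$ need not be trivial. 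The paper, by contrast, simply asserts well-definedness ``in view of the defining condition $b\equiv 1\mod\mathfrak{M}$'' and then verifies the combined map $b\mathcal{O}_{K}\mapsto({\rm sgn}(b),{\rm sgn}(b^{\upsigma}))$ is an isomorphism onto $\F_{q}^{\times}\times\F_{q}^{\times}$, which also yields the compositum statement. You have correctly identified that something needs to be said here, but the argument you sketch does not close the gap.
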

   
  \begin{proof}  To see that the Galois groups appearing are as claimed, we note first that the sign function $g\mapsto {\rm sgn}(g)$ (= coefficient of first non-zero term of the Laurent expansion of $g$ in $T^{-1}$) descends to a well defined epimorphism
   $ {\rm sgn}:{\sf P}_{\mathfrak{M}_{f}}^{0}\longrightarrow \F_{q}^{\times}$, in view of the defining condition $b\equiv 1\mod \mathfrak{M}_{f}$ for $b\mathcal{O}_{f}\in {\sf P}_{\mathfrak{M}_{f}}^{0}$.  (To see that the map is onto, one simply takes an element of $\mathfrak{M}_{f}$ of absolute value $>1$ and of sign $c\in\F^{\times}_{q}$ and adds $1$ to it to obtain $b\equiv 1\mod\mathfrak{M}_{f}$
   with sign $c$.)
   Then one sees that for $i=1,2$, 
$ {\rm Gal}( K_{\mathfrak{M}_{f}, i} /K_{\mathfrak{M}_{f}}^{0}) \cong {\sf P}_{\mathfrak{M}_{f}}^{0}/{\sf P}_{\mathfrak{M}_{f}, i} \cong \F_{q}^{\times}$,  where the last isomorphism is given by $b\mathcal{O}_{f}\mapsto {\rm sgn}(b)$, $b\mathcal{O}_{f}\mapsto {\rm sgn}(b^{\upsigma})$ for $i=1,2$. Similarly
${\rm Gal}( K_{\mathfrak{M}_{f}}{}/K_{\mathfrak{M}_{f}, i}) \cong {\sf P}_{\mathfrak{M}_{f}, i}/{\sf P}_{\mathfrak{M}_{f}}{} \cong \F_{q}^{\times}$. Finally, the map
   \[  {\rm Gal}(K_{\mathfrak{M}_{f}}{}/K_{\mathfrak{M}_{f}}^{0})\cong {\sf P}_{\mathfrak{M}_{f}}^{0}/{\sf P}_{\mathfrak{M}_{f}}{}\longrightarrow \F^{\times}_{q}\times \F^{\times}_{q},\quad b\mathcal{O}_{f}\mapsto ({\rm sgn}(b),{\rm sgn}(b^{\upsigma}) ) \]
   is an isomorphism.  Thus $K_{\mathfrak{M}_{f}}{}$ is the compositum of $K_{\mathfrak{M}_{f}, 1}$ and $K_{\mathfrak{M}_{f}, 2}$. 
   \end{proof}

By class field theory, we have $K_{\mathfrak{m}_{f}}{} \supset K_{\mathfrak{M}_{f}, 1}$ and hence
       the following diagram of field extensions labeled by Galois groups:
   \begin{align*}
   \begin{diagram}
   & & K_{\mathfrak{m}_{f}}{} & & \\
   & \ldLine^{W\circlearrowright} & &   \rdLine^{Z'\circlearrowright}  & \\ 
    H_{A_{f}} & & & & K_{\mathfrak{M}_{f}, 1}\\
      & \luLine_{Z\circlearrowright} & &   \ruLine_{W'\circlearrowright} & \\ 
       & & H_{\mathcal{O}_{f}} & & \\
       & & \dLine & & \\
       & & K & &
   \end{diagram}
   \end{align*}
    In the diagram $Z:={\rm Gal}(H_{A_{f}}/H_{\mathcal{O}_{f}})  \cong {\rm Ker}( {\sf Cl}_{A_{\infty_{1}}} \twoheadrightarrow {\sf Cl}_{\mathcal{O}_{K}}  ) =  \langle \mathfrak{a}_{d_{0}-1}  \rangle \subset {\sf Cl}_{A_{\infty_{1}}} $, by  Proposition \ref{ZProp}.
   The group $Z':={\rm Gal}(K_{\mathfrak{m}_{f}}{}/K_{\mathfrak{M}_{f}, 1})$ is isomorphic, again by Class Field Theory, to 
${\rm Ker}( {\sf Cl}_{\mathfrak{m}_{f}}{} \twoheadrightarrow{\sf Cl}_{\mathfrak{M}_{f}, 1}   )$. 
As noted in (\ref{WID}), the group $W := {\rm Gal}(K_{\mathfrak{m}_{f}}{}/H_{A_{f}})$  may be identified with 
$  {\rm Ker}( {\sf Cl}_{\mathfrak{m}_{f}}{} \twoheadrightarrow {\sf Cl}_{A_{f}} ) 
   \cong (A_{f}/\mathfrak{m}_{f})_{\mathfrak{c}}^{\times}$.   We may similarly identify $W':={\rm Gal}(K_{\mathfrak{M}_{f}, 1}/H_{\mathcal{O}_{f}})$
 with 
   \[   {\rm Ker}( {\sf Cl}_{\mathfrak{M}_{f}, 1} \twoheadrightarrow {\sf Cl}_{\mathcal{O}_{f} })=
   \left( \{ b\mathcal{O}_{f}\in {\sf I}_{\mathfrak{M}_{f}}  |\; b\not\equiv 1\mod \mathfrak{M}_{f}\text{ for $b\in \mathcal{O}_{f}$ monic}\}\cup \{ 1\}\right) /\text{\sf P}_{\mathfrak{M}_{f}, 1}  .\]
   
   The restriction maps of Galois groups yield  homomorphisms
$  \uppsi: W\longrightarrow W'$, $\upphi:Z'\longrightarrow Z$.
 Let
$B:= 

  \end{align*}
  We calculate absolute values starting from the innermost term $1+{\tt P}_{d-1}$ of ($\maltese$) and then radiate outwards.  There are $(q-1)q^{d-2}$ exponential factors in ${\tt P}_{d-1}$, so 
   \begin{align}\label{onion1}  |{\tt P}_{d-1}|=  q^{(d-(d-1)q^{2})q^{d-2}} \cdot q^{(d+(d-1)q)(q-1)q^{d-2}} =q^{q^{d-1}} = |1+{\tt P}_{d-1}|>1.\end{align}
   Continuing to the next set of braces, $\{ 1+ {\tt P}_{d-2}\{ 1+{\tt P}_{d-1}\}\}$, we see that 
   \begin{align}\label{onion2} |{\tt P}_{d-2}\{ 1+{\tt P}_{d-1}\}| & =q^{(d-(d-1)q^{2})q^{d-3}} \cdot q^{(d+(d-2)q)(q-1)q^{d-3}}   \cdot q^{q^{d-1}}  \\ \nonumber
   & = q^{2q^{d-2}} =|\{ 1+ {\tt P}_{d-2}\{ 1+{\tt P}_{d-1}\}\}| >1 .\end{align}
   Inductively, the RHS of ($\maltese$) has absolute value $q^{(d-1)q}\not=1$, which verifies ($\maltese$).
   
   \begin{case3}\label{Case32} $|(b-1)\upmu |=|fT^{d-1}| =q^{2d-1}$
   \end{case3}

We may assume  
$  (b-1)\upmu = c_{d-1}'fT^{d-1} +c_{d-2}'fT^{d-2}+\cdots +c_{1}'fT + z +$ lower, 
  where $|z|\leq |f|$, however since we are considering torsion elements $\mod (f)$, we may in fact assume $|z|<|f|=q^{d}$.
  We first assume $z+\text{lower}\not\equiv 0\mod (f)$.
  Without loss of generality, we may assume that $c_{d-1}'=\cdots =c_{1}'=1$ (as we did in {\it Case} \ref{d=3secondcase} of the proof of Theorem \ref{genus2rayclasstheorem}).
 Here, the arguments of the exponentials appearing in ${\tt P}_{d-1}$ may present cancellation.  Therefore, to calculate its absolute value, we factorize
 \begin{align*} {\tt P}_{d-1} & = &\upxi_{0}^{(q-1)q^{d-2}}\prod_{\substack{c_{1},\dots ,c_{d-1}\in \F_{q} \\   0,1\not= c_{d-1}}}e_{(f)}\left(  (1-c_{d-1})fT^{d-1} +\dots + (1-c_{1})fT+z+\text{lower}\right) \times \\
& & \prod_{\substack{c_{1},\dots ,c_{d-2}\in\F_{q}\\  1\not= c_{d-2}}}e_{(f)}\left(  (1-c_{d-2})fT^{d-2} +\dots + (1-c_{1})fT+z+\text{lower}\right) \times \\
& & \vdots\; \\
& & \prod_{ 1\not= c_{1}\in\F_{q}}e_{(f)}\left((1-c_{1})fT +z+\text{lower} \right) \times \\
& & e_{f}\left(z+\text{lower} \right).
 \end{align*}
Calculating absolute values of exponentials exactly as we did in {\it Case} \ref{Case31}, and using the fact that
$ |e_{(f)}\left(z+\text{lower} \right)|=|z|$ since $|z|<|f|$,  we then have 
 \begin{align*}  |{\tt P}_{d-1}| & = &  q^{(d-(d-1)q^{2})q^{d-2}}\cdot q^{(d+(d-1)q)(q-2)q^{d-2}}\cdot q^{(d+(d-2)q)(q-1)q^{d-3}}\cdots \cdot q^{(d+q)(q-1)} |z|.
 \end{align*}
 The final exponent of $q$ in the line above is calculated as follows: adding and subtracting $(d+(d-1)q)q^{d-2}$ to the sum of the exponents gives
 \begin{align*}
-(d-1)(q^{d}+q^{d-1}) +
(q-1)\bigg\{  (d+(d-1)q)q^{d-2} +(d+(d-2)q)q^{d-3}+\cdots + (d+2q)q+(d+q) \bigg\} & = \\
-(d-1)(q^{d}+q^{d-1})+(q-1)\bigg\{    (d-1)q^{d-1} +dq^{d-2}+(d-2)q^{d-2}+dq^{d-3}+\cdots + 2q^{2}+ dq+q+d \bigg\} &= \\
-(d-1)(q^{d}+q^{d-1})+(q-1) \bigg\{ (d-1)q^{d-1}   +(2d-2)q^{d-2}+(2d-3)q^{d-3} + \cdots  +(d+1)q +d\bigg\}  &=\\
-(2d-2)q^{d-2}+(q-1)\bigg\{ (2d-3)q^{d-3} + \cdots +(d+1)q +d  \bigg\}  &=\\ 
-q^{d-2}-q^{d-3}-\cdots -q-d .\;\; &
 \end{align*}
 We conclude that
$ |{\tt P}_{d-1}| = q^{-q^{d-2}-q^{d-3}-\cdots -q-d}|z|$.
  On the other hand,  
 $ |{\tt P}_{1}| =q^{d-(d-1)q^{2}}\cdot q^{(d+(d-1)q)(q-1)}=q^{q}>1$.
 So for ($\maltese$) to be false,  
 $ |1+{\tt P_{2}}\{ 1+ \cdots \}\cdots \}|=$ $q^{-q}<1$.
 For this, we must have $|{\tt P_{2}}\{ 1+ \cdots \}\cdots \}|=1$.
 But 
 $|{\tt P_{2}}| $ $= q^{(d-(d-1)q^{2})q}\cdot q^{(d+(d-1)q)q(q-1)} $ $ =q^{q^{2}}>1$.
 Inductively, we are led to $|{\tt P_{d-2}}|=q^{q^{d-2}}$ and hence we must have $|{\tt P_{d-1}}|=1$, which in light of the above, implies that (since $d\geq 3$)
$ |z|   =$ $q^{q^{d-2}+q^{d-3}+\cdots +q+d}$ $\geq q^{d}>|z|$, 
 contradiction.
  If $z+\text{lower}\equiv 0$, then the above argument shows ${\tt P}_{d-1}=0$ hence $|1+{\tt P}_{d-1}|=1$, which does not
 contradict ($\maltese$).
 
 \begin{case3}\label{Case33} $|(b-1)\upmu |=|fT| =q^{d+1}$.
 \end{case3}
 
 Here  $(b-1)\upmu = cfT + y + \text{lower}$; without loss of generality, take $c=1$.   Assume first that $y+\text{lower}\not\equiv 0\mod (f)$, $|y|<|f|$.
Exactly as in {\it Case} \ref{Case31},  $|1+{\tt P}_{d-1}|=q^{q^{d-1}}$ and inductively $|1+{\tt P_{2}}\{ 1+ \cdots \}\cdots \}|=q^{(d-2)q^{2}}$.
 Thus if ($\maltese$) were false, 
 \begin{align*}
 \begin{array}{lllll}
 1 & = & |{\tt P}_{1}|q^{(d-2)q^{2}} & = & q^{d-(d-1)q^{2}} \cdot q^{(d+q)(q-2)}\cdot q^{(d-2)q^{2}} \cdot |e_{(f)}(y+\text{lower})|\\
 &&  & =  &q^{(d-2)q-d}\cdot |y|.
  \end{array}
 \end{align*}
 Note here that we can now immediately dispense with the case $y+\text{lower}= 0$, which is in conflict with the above.
 So we get equality precisely when $ |y|=q^{d-(d-2)q}$.  If we do not, we are done.  Otherwise, replace $\upmu$ by $\tilde{\upmu}=f\upmu$.  Then
 $\mod (f)$ we have
 \[ (b-1)\tilde{\upmu} \equiv fy +\text{lower} \mod (f) , \quad |fy|=q^{2d-(d-2)q}.\]  
 Replace $(b-1)\tilde{\upmu}$ by the $(f)$ equivalent $ fy +\text{lower}$.
 Since we are assuming $d>3$ in this section,  $ |fy|<|f|=q^{d}$ and non zero except when
 $d=4$ and $q=2$. Indeed, the inequality $d/(d-2)<q$ for all $d\geq 5$ implies $2d-(d-2)q<d$.
  Leaving aside the case $d=4$ and $q=2$ for the moment,  $\tilde{\upmu}$ is non-trivial, since it is moved to a nontrivial element by
  $b-1$.  In particular, we may divide out the analogue of the expression (\ref{EquivalentFormulation})
  obtained by replacing $\upmu$ by $\tilde{\upmu}$ by $e_{0}(\upxi_{0}(b-1)\tilde{\upmu})$ (as we did in the beginning of the proof of this Theorem). Therefore we may repeat all of the above arguments replacing $\upmu$ by $\tilde{\upmu}$, where the estimate $|(b-1)\tilde{\upmu}|<q^{d}$ puts us in the setting of {\it Case} \ref{Case31}.
   If $d=4$, $q=2$,  then since $|y|=1$,  
   \[ (b-1)\tilde{\upmu} =fy +\text{lower} \equiv z +\text{lower} \mod (f), |z|<|f|.\]
   If $z+\text{lower}\not\equiv 0\mod (f)$, we may proceed as in {\it Case} \ref{Case31}.  Otherwise, since $|z|<|f|$, we must have $z+\text{lower}=0$ in which case
 $ (b-1)\tilde{\upmu} = f(b-1)\upmu \equiv 0\mod (f)$.
   This in turn implies $(b-1)\upmu$ is an $(f)$ torsion point i.e.\ $(b-1)\upmu\in A_{\infty_{1}}$ or $\upmu\in (b-1)^{-1}$.  But we have already shown at the beginning
   of the proof of this Theorem that this cannot occur for $\upmu$ a generator.

 
  \begin{case3} $|(b-1)\upmu |=|fT^{j}| =q^{d+j}$, $1<j<d-1$.
 \end{case3}
 
 Assume again $(b-1)\upmu= fT^{j}+\cdots +fT+z+\text{lower}$.
 First note that 
 \[ |{\tt P}_{1}| = q^{d-(d-1)q^{2}} \cdot q^{(d+jq)(q-1)}=q^{-q((d-1-j)q-(d-j) )}\]
 which is $<1$ provided $(d-1-j)q-(d-j)>0$.  The latter is always true except when $j=d-2$ and $q=2$: indeed for $j<d-2$,
 \[   \frac{d-j}{d-1-j} < 
 2 \leq q, \]
 and when $j=d-2$ one checks by hand that the inequality $(d-1-j)q-(d-j)>0$ holds provided $q>2$.
   The case $j=d-2$ and $q=2$ will be dealt with later
 so we assume first that when $j=d-2$, $q>2$.  Note that more generally, for $k<j$
 \begin{align}\label{Pkcalc}   |{\tt P}_{k}| = q^{(d-(d-1)q^{2})q^{k-1}} \cdot q^{(d+jq)(q-1)q^{k-1}} =q^{-q^{k}((d-1-j)q-(d-j) )} ,\end{align}
 also $<1$ by our present hypothesis. 
  Then in order to contradict ($\maltese$), we require that 
$ | 1+{\tt P}_{2}\{ 1+ \cdots \{ 1+{\tt P}_{d-1}\} \cdots\}  |= q^{q((d-1-j)q-(d-j) )} >1$,
 which implies that
$ | {\tt P}_{2}\{ 1+ \cdots \{ 1+{\tt P}_{d-1}\} \cdots\}  |= q^{q((d-1-j)q-(d-j) )}$.
  Inductively (and using the calculation (\ref{Pkcalc})), we see that in order to contradict ($\maltese$) we must have
 \begin{align*}  | 1+{\tt P}_{j}\{ 1+ {\tt P}_{j+1}\{ \cdots \{ 1+{\tt P}_{d-1}\} \cdots\}\}  | & = | {\tt P}_{j}\{ 1+ {\tt P}_{j+1}\{ \cdots \{ 1+{\tt P}_{d-1}\} \cdots\}\}   | \\ 
 &  = q^{(q+\cdots +q^{j-1})((d-1-j)q-(d-j) )} >1.\end{align*}
  Starting with $1+{\tt P}_{d-1}$ and moving outwards, we calculate (recall the analogous calculations made in (\ref{onion1}), (\ref{onion2}) in {\it Case} \ref{Case31})
$   | 1+{\tt P}_{j+1} \{ 1+ \cdots \{ 1+{\tt P}_{d-1}\} \cdots\} | =q^{(d-(j+1))q^{j+1}}$.
To calculate $|{\tt P}_{j}|$, we factor ${\tt P}_{j}$ exactly as we did ${\tt P}_{d-1}$  in {\it Case} \ref{Case32}:
  \begin{align*} {\tt P}_{j} & = &\upxi_{0}^{(q-1)q^{j-1}}\prod_{\substack{c_{1},\dots ,c_{j} \in \F_{q}\\   0,1\not= c_{j}}}e_{(f)}\left(  (1-c_{j})fT^{j} +\dots + (1-c_{1})fT+z+\text{lower}\right) \times \\
& & \prod_{\substack{c_{1},\dots ,c_{j-1} \in \F_{q}\\   1\not= c_{j-1}}}e_{(f)}\left(  (1-c_{j-1})fT^{j-1} +\dots + (1-c_{1})fT+z+\text{lower}\right) \times \\
& & \vdots\; \\
& & \prod_{ 1\not= c_{1}\in\F_{q}}e_{(f)}\left((1-c_{1})fT +z+\text{lower} \right) \times \\
& & e_{(f)}\left(z+\text{lower} \right)
 \end{align*}
 So in analogy with that previous computation we obtain
 \begin{align}\label{exceptionalP} |{\tt P}_{j}|=q^{-q^{j-1}-\cdots -q-d} |e_{(f)}\left(z+\text{lower} \right)| =q^{-q^{j-1}-\cdots -q-d}|z|.\end{align}
 Putting everything together, we conclude that if the inequation in ($\maltese$) is false, 
 \begin{align*}
 |z| & = & q^{-(d-1-j)q^{j+1} + ((d-1-j)q-(d-j))(q+\cdots + q^{j-1}) + q^{j-1}+\cdots + q +d} \\
 & = & q^{(d-1-j)q(-q^{j}+q^{j-1}-1)+d} .
 \end{align*}
 This implies that $z\not=0$.  Now replace $\upmu$ by $\tilde{\upmu}=f\upmu$ which gives 
 $(b-1)\tilde{\upmu}\equiv fz + \text{lower}\mod (f)$ and 
$  |fz+\text{lower}| = q^{2d-(d-1-j)q(q^{j}-q^{j-1}+1)}$.
 By Lemma \ref{exponentlemma} below, we are done since $|fz+\text{lower}|<1<|f|$, and this can be handled using the argument of {\it Case} \ref{Case31}.
What remains is the case $j=d-2$ and $q=2$.  Here $|{\tt P}_{1}|=1$ which implies we would need 
$ 1+{\tt P}_{2}\{ 1+ \cdots \{ 1+{\tt P}_{d-1}\} \cdots\}  | =1$ 
in order to contradict ($\maltese$).  But the latter 
implies either $|{\tt P}_{2}\{ 1+ \cdots \{ 1+{\tt P}_{d-1}\} \cdots\}  | <1$
or 
${\tt P}_{2}\{ 1+ \cdots \{ 1+{\tt P}_{d-1}\} \cdots\}  =c+\text{lower}$, $-1\not= c\in\F_{q}$.
But as $q=2$, $-1=1$ and so the latter is not possible.  Since in addition, by (\ref{Pkcalc}), $|{\tt P}_{2}|=1$, this implies 
${\tt P}_{3}\{ 1+ \cdots \{ 1+{\tt P}_{d-1}\} \cdots\}  =1 +$ lower. 
Inductively, ${\tt P}_{d-2}\{ 1+ {\tt P}_{d-1}\}  =1 +$ lower.
Therefore, since $|{\tt P}_{d-1}+1|=2^{2^{d-1}}$ (see (\ref{onion1})),
 $ |{\tt P}_{d-2}| =2^{-2^{d-1}}$.
 By (\ref{exceptionalP}),  
 $ |{\tt P}_{d-2}| =2^{-2^{d-3}-\cdots -2-d}|z|  $
 or 
 $ |z| = 2^{-2^{d-1}    +2^{d-3}+\cdots +2+d}$,
 which can be seen to be $<1$ for $d\geq 4$.  Therefore if we replace $\upmu$ by $\tilde{\upmu}=f\upmu$, this case is taken care of as well
 since $(b-1)\tilde{\upmu}\equiv fz+\text{lower}\mod (f)$ and $|fz+\text{lower}|<|f|<|fT|$, and this can be handled via the argument in {\it Case} \ref{Case31}.
 \end{proof}
 
  \begin{lemm}\label{exponentlemma} Let $d\geq 4$, $2\leq j\leq d-2$ and if $j=d-2$, $q>2$.  Then
  \[ q^{2d-(d-1-j)q(q^{j}-q^{j-1}+1)}<1.\]
 \end{lemm}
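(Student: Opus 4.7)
The plan is to reduce the statement to an elementary estimate and split into two sub-cases based on the value of $k := d-1-j$. Taking logarithms base $q$, the claim
\[ q^{2d-(d-1-j)q(q^{j}-q^{j-1}+1)}<1 \]
is equivalent to the strict inequality
\[ (d-1-j)q\bigl(q^{j}-q^{j-1}+1\bigr) > 2d, \]
which, upon setting $k=d-1-j\geq 1$ and rewriting $q^{j}-q^{j-1}+1 = q^{j-1}(q-1)+1$, becomes
\[ kq\bigl(q^{j-1}(q-1)+1\bigr) > 2(k+j+1). \tag{$\ast$} \]
The hypothesis translates into: $k\geq 1$, $j\geq 2$, $d = k+j+1\geq 4$, and whenever $k=1$ one has $q\geq 3$.

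First I would dispose of the generic case $k\geq 2$ (which is equivalent to $j\leq d-3$). For $q\geq 3$ the left-hand side of ($\ast$) is already at least $2\cdot 3\cdot(3\cdot 2 +1)=42$, easily larger than $2(k+j+1)$ after a routine exponential-versus-linear comparison in $j$. The only slightly delicate subcase is $q=2$ with $k\geq 2$, where ($\ast$) reduces to $k\cdot 2^{j-1}>j+1$; this I would verify by hand for $j=2$ (giving $2k>3$, true for $k\geq 2$) and then observe that for $j\geq 3$ the left-hand side grows exponentially while the right-hand side is linear, so the inequality is immediate.

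Next I would handle the boundary case $k=1$, which by hypothesis forces $q\geq 3$. Here ($\ast$) simplifies to $q^{j}(q-1)+q>2j+4$, and since $q\geq 3$ gives $q^{j}(q-1)\geq 2\cdot 3^{j}$, the inequality follows from $2\cdot 3^{j}>2j+1$ for $j\geq 2$, a trivial induction (the base case $j=2$ is $18>5$).

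The only real obstacle is bookkeeping: making sure the combinatorial hypothesis ``$j=d-2$ implies $q>2$'' lines up exactly with the tight corner of the estimate, since the case $k=1$, $q=2$ is precisely the one in which ($\ast$) can fail (one checks $1\cdot 2\cdot(2^{j-1}+1)$ against $2(j+2)$). This exclusion is guaranteed by the hypothesis, so no separate argument is needed; once the case split above is carried out, the proof is concluded by a one-line verification that all remaining cases satisfy ($\ast$).
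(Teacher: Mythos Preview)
Your argument is correct. Setting $k=d-1-j$ and splitting into $k\geq 2$ versus $k=1$ is a clean way to organise the estimate, and you correctly identify that the excluded corner $k=1$, $q=2$ is exactly the case ruled out by hypothesis. The only places that are a bit thin are the two ``routine exponential-versus-linear'' appeals; each can be made fully explicit in one line (for $q\geq 3$, $k\geq 2$ observe that both sides of ($\ast$) are linear in $k$ with slopes at least $q(q^{j-1}(q-1)+1)\geq 21$ and $2$ respectively, then compare in $j$; for $q=2$, $k\geq 2$, $j\geq 3$ the bound $k\cdot 2^{j-1}\geq 2^{j}>j+1$ is immediate).

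The paper's proof takes a genuinely different route. It keeps $j$ as the running variable, first disposes of the boundary values $j=2$ and $j=d-2$ by explicit numerical checks for small $d$, and then for the interior range $3\leq j\leq d-3$ reduces to $q=2$, defines $f(j)=(d-1-j)\,2^{j-1}-d$, computes $f'(j)$ to show $f$ has a single sign change of its derivative on the interval, and concludes by verifying $f>0$ at both endpoints. Your approach is more elementary (no derivative, no critical-point analysis) and more uniform across the parameter ranges; the paper's approach has the virtue of being completely explicit at every step but pays for it with more case-by-case bookkeeping and a calculus argument that is heavier than the problem actually demands.
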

 
 \begin{proof} First consider the inequality for $j=2$. 
 If $d=4$, $j=2=d-2$ so we may assume here $q>2$.  Then
 $2d=8< 1\cdot 3(3^{2}-3+1)\leq 1\cdot q (q^{2}-q +1)$.
 Notice that there are no more cases of $j$ to consider for $d=4$.
 If $d=5$, 
$2d= 10< 2\cdot 2(2^{2}-2+1)\leq 2\cdot q (q^{2}-q +1)$.
Otherwise, If $d>5$, 
$2d/(d-3)< 5<2(2^{2}-2+1)\leq  q (q^{2}-q +1) $.
 Now consider the inequality for $j=d-2$ and $q>2$.  We may assume $d\geq 5$ here.
When $d=5$, 
 $  2d= 10< 1\cdot 3(3^{3}-3^{2}+1) \leq 1\cdot q (q^{3}-q^{2} +1)$. 
 When $d>5$, 
 $2d < 3\cdot (3^{d-2}-3^{d-3}+1)=3\cdot (3^{d-3}\cdot 2 +1)\leq q(q^{d-2}-q^{d-3} +1) $.
 Notice by the above calculations, the cases $d=4,5$ are finished.
Now consider the general case.  
The statement of the Lemma is a clear consequence of $2d<(d-1-j)(2^{j+1}-2^{j}+2)$, which is in turn implied by $2d<(d-j-1)(2^{j+1}-2^{j})$, or equivalently,
$d<(d-j-1)(2^{j}-2^{j-1})$.
Let us call $f(j):=(d-1-j)(2^{j}-2^{j-1})-d$: thus we want to show $f(j)>0$ on $[3,d-3]$.  When $d=6$ this interval is just the point $3$, and $f(3)>0$.
Now $f'(j)=-(2^{j}-2^{j-1})+(d-j-1)(2^{j}-2^{j-1})\log{2}$.
Note that in the chosen interval $2^{j}-2^{j-1}>0$. 
Therefore for integral values of $j$,
\begin{align}\label{derivativeineq} f'(j)\geq 0 \iff d-j-1\geq 1/\log{2}\approx 3.3219 \iff j\leq d-5.\end{align}
If $d=7$, (\ref{derivativeineq}) is false in $[3,4]$ and hence $f(j)$ in decreasing there, so its enough
to check $f(4)>0$, which is true.  
Now 
for $d\geq 8$, $f(j)$ has a critical point in the interval $[3,d-3]$.  Since the sign of $f'(j)$ changes from positive to negative as we pass through the critical point from the left to the right, it follows that
the minimum occurs at one of the endpoints of the interval $[3,d-3]$.  For $j=3$,  $f(3)= (d-4)\cdot 4-d>0$, and
for $j=d-3$,  $f(d-3)= 2\cdot (2^{d-3}-2^{d-4} )=2^{d-3}>0$ as well.

 \end{proof}
 
 \section{Conjecture in Characteristic Zero}\label{CharZeroConj}
 
 In this section we present a conjectural adaptation of the ideas developed here to number fields, which may lead to an analog of the Main Theorem in this setting.

 Let $K/\Q$ be a real quadratic extension, $\uptheta_{0}\in \mathcal{O}_{K}^{\times}$ a fundamental unit.
For $\upvarepsilon >0$, the set of $\upvarepsilon$-diophantine approximations \cite{Ge-C} is
$\Uplambda_{\upvarepsilon}(\uptheta_{0} ) = \{ n\in\Z \;:\;\; \| n\uptheta_{0}\| <\upvarepsilon \} 
$. 
The $\upvarepsilon$ sine function of $\uptheta_{0}$ is
\[   \sin_{\Uplambda_{\upvarepsilon}(\uptheta_{0} ) }(z) = z\prod_{0<n\in \Uplambda_{\upvarepsilon}(\uptheta_{0} )} \left( 1 - \frac{z^{2}}{n^{2}} \right)
 , \]
an analytic function of $z\in\C$.
Writing $  \cos_{\Uplambda_{\upvarepsilon}(\uptheta_{0} ) }(z)  = \frac{d \sin_{\Uplambda_{\upvarepsilon}(\uptheta_{0} ) }(z)}{dz}$ ,
the $\upvarepsilon$-exponential is then
$  \exp_{\Uplambda_{\upvarepsilon}(\uptheta_{0} ) }(z)  =  \cos_{\Uplambda_{\upvarepsilon}(\uptheta_{0} ) }(z)  +i\sin_{\Uplambda_{\upvarepsilon}(\uptheta_{0}) }(z)  $.
As in \S \ref{QDMSection} of this paper, 
$ \lim_{\upvarepsilon\rightarrow 0 }  \exp_{\Uplambda_{\upvarepsilon}(\uptheta_{0} ) }(z) = z$,
so a renormalization scheme is called for.  

Using as a template Wallis' formula for $\uppi$,
we define 
\[   \uppi_{\uptheta_{0}, \upvarepsilon} = \frac{1}{\upbeta_{1}}   \prod_{i=1}^{\infty} \frac{n_{i}^{2}}{\upbeta_{i} \upbeta_{i+1}} ,\]
where
$  \Uplambda_{\upvarepsilon}^{+}(\uptheta_{0} ) = \{ 0<n_{1}<n_{2}<\cdots \} \subset \Uplambda_{\upvarepsilon}(\uptheta_{0} ) $
are the positive zeros of $ \sin_{\Uplambda_{\upvarepsilon}(\uptheta_{0})}(z) $ and
$    0< \upbeta_{1} <\upbeta_{2}<\cdots $
are the positive zeros of $ \cos_{\Uplambda_{\upvarepsilon}(\uptheta_{0})}(z) $.  
We now normalize $\Uplambda_{\upvarepsilon}(\uptheta_{0} )$, as was done in positive characteristic in \S  \ref{QDMSection}:  
$ \breve{\Uplambda}_{\upvarepsilon}(\uptheta_{0} ):=  \uppi_{\uptheta, \upvarepsilon}  \Uplambda_{\upvarepsilon}(\uptheta_{0} ) $.
We denote by 
\[ s_{\upvarepsilon}(z)  = \sin_{\breve{\Uplambda}_{\upvarepsilon}(\uptheta_{0} ) }(z) = \uppi_{\uptheta_{0}, \upvarepsilon} \sin_{\Uplambda_{\upvarepsilon}(\uptheta_{0} ) }( \uppi_{\uptheta_{0}, \upvarepsilon}^{-1}z),  \quad c_{\upvarepsilon}(z) =\frac{ds_{\upvarepsilon}(z)}{dz} =   \cos_{\Uplambda_{\upvarepsilon}(\uptheta_{0} ) }(\uppi_{\uptheta_{0}, \upvarepsilon}^{-1}z)   \]
the normalized sine and cosine functions.
The associated normalized exponential is then
$  e_{\upvarepsilon}(z) :=  c_{\upvarepsilon}(z)  + is_{\upvarepsilon}(z) 
$.  The quantum exponential is the multi-valued limit
\[  \exp^{\rm qt}= \exp_{\uptheta_{0}}^{\rm qt} :\C\multimap \C, \quad  \exp^{\rm qt}(z) = \lim_{\upvarepsilon\rightarrow 0} e_{\upvarepsilon}(z)  . \]
In Theorem \ref{valueseqtchar0} below, we will show that $ \exp^{\rm qt}$ is non-trivial.  

We now describe a characteristic zero analog of the notion of Drinfeld module.  The main reference for this material is \cite{GLL}.   It will be seen that the structure of ``Drinfeld module in characteristic zero'' lacks certain key features present
in the classical (positive characteristic) setting, so its theory is at the moment inchoate.

Let $K/\Q$ be a finite extension and fix an infinite place $\upsigma$.  Denote by $\K$ the completion of $K$ with respect to $\upsigma$ (i.e.\ $\K$ is either $\R$ or $\C$). Define
\begin{align*} A_{\upsigma}  & = \big\{ \upalpha\in \mathcal{O}_{K} \; : \;\; |\upsigma'(\upalpha )| \leq 1 \text{ for all infinite places $\upsigma'\not=\upsigma, \overline{\upsigma} $} \big\}   \\
&=\{ 0\} \cup \upmu_{K}\cup \pm \left\{ \text{Pisot-Vijayaraghavan and Salem numbers in }\mathcal{O}_{K} \right\} \subset \K,
\end{align*}
where $\upmu_{K}=$ $K$-roots of unity. Moreover
$A_{\upsigma}$ is a {\bf {\em quasicrystal ring}}: a quasicrystal which is a multiplicative monoid.   See \S 2 of \cite{GLL}.

We recall that a {\bf {\em quasicrystal}}  $ \mho\subset \K$ (see \cite{Meyer}, \cite{Moody}) is a Delaunay set -- a relatively dense, uniformly discrete subset of $\K$ --
which is almost closed with respect to the sum: there exists $F\subset\K$ a finite set such that
$\mho -\mho \subset \mho+F$. 
The family of quasicrystal rings $A_{\upsigma}$ is the exact analog of the family of ``small Dedekind domains'' used throughout function field arithmetic (such as the rings
$A_{\infty_{1}}$, $A_{\infty_{2}}$ of this paper) and which are defined as  rings of functions  regular outside
of a fixed point of a curve.

Let $K/\Q$ again be real quadratic and fix $\uptheta_{0}>0$ a fundamental unit.  Assume $K\subset \R$, denote the infinite places $\upsigma_{1}= {\rm id}$, 
$\upsigma_{2}$, and write $\upalpha'=\upsigma_{2}(\upalpha) $. We fix  $A_{\upsigma_{1}}$.   

In what follows we will need to replace $\R_{+}$ by a space which is locally a Cantor set.   Let
$\log_{\uptheta_{0}^{-1}} (\mathcal{O}_{K})= \{ \ell \in \R_{+}\; : \;\; \uptheta_{0}^{-\ell} \in \mathcal{O}_{K} \}  $,
a dense subset of $\R_{+}$.  
Remove each point $\ell\in \log_{\uptheta_{0}^{-1}} (\mathcal{O}_{K})$ from $\R_{+}$ and replace it by a pair of ordered points
$\ell_{-} < \ell_{+} $.
The resulting set 
\[ \hat{\R}_{+}= \bigg(   \R_{+}\setminus\log_{\uptheta_{0}^{-1}} (\mathcal{O}_{K}) \bigg) \bigcup_{\ell\in \log_{\uptheta_{0}^{-1}} (\mathcal{O}_{K})}\{  \ell_{-} < \ell_{+}\} \]
is topologized by the induced total order: it is locally compact, perfect and totally disconnected.  Denote 
\[ \widehat{[0,1)} :=\{ x\in \hat{\R}_{+}\; : \;\; 0\leq x <1\}. \]

By a {\bf {\em quasicrystal fractional ideal}} is meant a quasicrystal $\mathfrak{a}\subset\R$
 which is closed with respect to the multiplicative action of $A_{\upsigma_{1}}$; if $\mathfrak{a}\subset A_{\upsigma_{1}}$ it is called integral.
For each $x\in \hat{\R}_{+}$, we define 
an integral quasicrystal ideal
$\mathfrak{a}_{x}  \subset A_{\upsigma_{1}} $
as follows.   
\[
\mathfrak{a}_{x} =\left\{ 
\begin{array}{ll}
  \{ \upalpha \in A_{\upsigma_{1}} : \;\;   |\upalpha'|  \leq \uptheta^{-x} \} & \text{ if $x=\ell_{+}$ for some $\ell\in  \log_{\uptheta_{0}^{-1}} (\mathcal{O}_{K})$}\\
  \{ \upalpha \in A_{\upsigma_{1}} : \;\;   |\upalpha'|  < \uptheta^{-x} \} & \text{otherwise }\\
 \end{array}
\right.
\]
Observe that if $y<x$ then 
$ \mathfrak{a}_{x}\subsetneq \mathfrak{a}_{y} $.

Recall the cyclic group of ideals $Z = \{ \mathfrak{a}_{0}, \dots ,\mathfrak{a}_{d_{0}-1} \}$, $ \mathfrak{a}_{i}\subset A_{\infty_{1}} $,
defined in \S \ref{QDMSection},  (\ref{aidef}); it fits into the short exact sequence
$  1\rightarrow Z\hookrightarrow {\sf Cl}_{A_{\infty_{1}}} \twoheadrightarrow {\sf Cl}_{K}\rightarrow 1 $,
and thus 
$ Z\cong {\sf Gal}( H_{A_{\infty_{1}}}/ H_{\mathcal{O}_{K}} ) $. 
The quasicrystal analog of $Z$ is 
\[  \hat{Z} = \{ \mathfrak{a}_{x} \; : \;\; x\in \widehat{[0,1)}\}  .\]
It represents a submonoid of the {\it monoid} of quasicrystal ideal classes ${\sf Cl}_{A_{\upsigma_{1}}}$, see \S 3 of \cite{GLL}.  There is an exact sequence of monoids
$  1\rightarrow \hat{Z} \hookrightarrow {\sf Cl}_{A_{\upsigma_{1}}}\twoheadrightarrow {\sf Cl}_{K}\rightarrow 1$ ;
 the subset $ Z_{0} = \{ \mathfrak{a}_{\ell_{+}}  \; :\;\;  \ell\in  \log_{\uptheta_{0}^{-1}} (\mathcal{O}_{K}) \} \subset\hat{Z}$
forms a dense {\it subgroup} of $\hat{Z}$.

 In the function field case,  the quantum modular invariant $j^{\rm qt}(f_{0})$ satisfies
 \begin{align}\label{jffform}  j^{\rm qt}(f_{0})=\{ j(\mathfrak{a}_{0}),\dots ,j(\mathfrak{a}_{d_{0}-1})\}  \end{align}
 where the $j(\mathfrak{a}_{i})$ are the $j$-invariants of the $\mathfrak{a}_{i}$.   See \cite{DGIII}, \cite{DGV}.   We may similarly define $j$-invariants of the $\mathfrak{a}_{x}$ (see \S 5 of \cite{GLL}),
$ j(\mathfrak{a}_{x}) \in\R$.
 In a private communication, R. Pink proved the following generalization of (\ref{jffform}):
 \begin{theo}[\cite{GLL}, \S 5]\label{Pink} Let $j^{\rm qt}(\uptheta_{0} )$ be the quantum modular invariant of $\uptheta_{0}$ {\rm (}as defined in \cite{Ge-C}{\rm )}.  Then 
 \[ \overline{j^{\rm qt}(\uptheta_{0} )} = \{ j(\mathfrak{a}_{x}) \; : \;\; \mathfrak{a}_{x}\in \hat{Z}\} .\]
 \end{theo}
 
 The point of using the Cantor set  $\widehat{[0,1)}$ to parametrize ideals, is that the function 
$x\longmapsto j(\mathfrak{a}_{x})$
 is {\it continuous}.  See Th\'{e}or\`{e}me 6 of \cite{GLL}.  Then as a Corollary, we have that the closure in $\R$ of $j^{\rm qt}(\uptheta_{0} )$ is the continuous image of a Cantor set.
 Using Pink's result, we may now state a conjecture about Hilbert class fields:
 
 \begin{conj1} Let $K/\Q$ be real quadratic.  Then there exists a measure $dm$ on the Cantor set $\widehat{[0,1)}$ such that
 \[ H_{K} = K \bigg( \Prodi _{x\in \widehat{[0,1)} } j (\mathfrak{a}_{x}) ^{dm}\bigg) \]
 where the expression in the parentheses is some notion of product integral \cite{Dollard} defined on $\widehat{[0,1)}$.
 \end{conj1}
 
In the function field case, to every fractional ideal $\mathfrak{a}\subset \C_{\infty}$ over $A_{\infty_{1}}$, we may associate the quotient
$ \D_{\mathfrak{a}} := \C_{\infty}/\mathfrak{a} $
 as well as an isomorphism of abelian groups given by the exponential 
 $  \exp_{\mathfrak{a}} :  \D_{\mathfrak{a}} \longrightarrow (\C_{\infty}, +) $, which in turn
gives rise to a Drinfeld module 
$ \uprho: A_{\infty_{1}} \longrightarrow \C_{\infty} \{ \uptau\}$.  The normalized version, which involves scaling the lattice $\mathfrak{a}$ by a special transcendental element 
 $\upxi_{\mathfrak{a}}$, defines a Hayes module.  
 
 In characteristic zero,
 given a (fractional) quasicrystal ideal $\mathfrak{a}\subset \R$ over $A_{\upsigma_{1}}$, the analog of the ``analytic'' Drinfeld module $\D_{\mathfrak{a}}$ is the completion 
 \[ \hat{ \SI}_{\mathfrak{a}} =\overline{\{ r + \mathfrak{a} \; : \;\; r\in \R\}}  \]
 of the set of translates $r+\mathfrak{a}$ in the space of 1-dimensional quasicrystals in $\R$, see for example \cite{BBG}. This completion $\hat{\SI}_{\mathfrak{a}} $ has the structure of a 1-dimensional solenoid, equipped with 
a canonical Cantor transversal: the completion of $\mathfrak{a}$,
 \[  \hat{\mathfrak{a}}   =  \overline{\{ \upalpha + \mathfrak{a} \; : \;\; \upalpha \in \mathfrak{a}\}} .\]
 We view $\hat{\mathfrak{a}}$ as a kind of ``thickening'' of the additive identity $0$.  There is a natural multiplicative action of $A_{\upsigma_{1}}$  on $\hat{ \SI}_{\mathfrak{a}}$,
 and accordingly, we say that $\hat{ \SI}_{\mathfrak{a}}$ has {\bf {\em quasicrystal multiplication}}.  See \S 6 of \cite{GLL} for more details.
 
In \S 7 of \cite{GLL},  normalized exponentials of the quasicrystal ideals $\mathfrak{a}_{x}$ are defined
$ e_{\mathfrak{a}_{x}}: \C\longrightarrow \C$, $e_{\mathfrak{a}_{x}}(z) = c_{\mathfrak{a}_{x}}(z) + is_{\mathfrak{a}_{x}}(z)$,
where as before
\[   s_{\mathfrak{a}_{x}}(z)=  z\prod_{0<\upalpha\in \mathfrak{a}_{x}} \left (1-\frac{z^{2}}{ (\uppi_{x}\upalpha)^{2} }\right) ,  \quad c_{\mathfrak{a}_{x}}(z) = s_{\mathfrak{a}_{x}}'(z) \]
and 
\[ \uppi_{\mathfrak{a}_{x}} = \frac{1}{\upbeta_{0}}   \prod_{i=1}^{\infty} \frac{\upalpha_{i}^{2}}{\upbeta_{i} \upbeta_{i+1}} , \]
where now the $\upalpha_{i}$ are the positive elements of $\mathfrak{a}_{x}$ and $\upbeta_{i}$ are the positive roots of $c_{x}(z)$.  If we restrict to $\R\subset \C$
the exponential extends to a continuous map 
\[  e_{\mathfrak{a}_{x}}: \hat{\SI}_{\mathfrak{a}_{x}} \longrightarrow \C. \]
See Th\'{e}or\`{e}me 12 of \cite{GLL}.

We now state and prove the analog of Theorem \ref{qtexplimit} of this paper:

\begin{theo}\label{valueseqtchar0} For all $z\in\C$, the closure of $\exp^{\rm qt}(z)$ in $\R$ is
\[ \overline{\exp^{\rm qt}(z)} = \left\{ e_{\mathfrak{a}_{x}}(z)|\; x\in \widehat{[0,1)}\right\}.\]
\end{theo}

\begin{proof}  It is enough to prove that the multi-points of $\exp^{\rm qt}(z)$ coincide with the set $\{  e_{\mathfrak{a}_{x}}(z)\}$ for
$x\not=\ell_{+}$.  Consider for fixed $x\in [0,1)$ the values
$  \upvarepsilon_{x,m} = \uptheta_{0}^{-x-m}, \quad m = 0,1,2,\dots $.
Let us write correspondingly
$\uppi_{x,m}$,  $\Uplambda_{x,m}$, $ \breve{\Uplambda}_{x,m}$, $e_{x,m}(z)$
for the associated ``pi'', the corresponding $\Uplambda$, the corresponding normalized $\breve{\Uplambda}$ and the normalized exponential.  
Let $ \Updelta = \uptheta_{0} -\uptheta_{0}'$ where as usual $\uptheta_{0}'$ is the Galois conjugate of $\uptheta_{0}$. 
We first note that the following renormalization scheme applied to $\Uplambda_{x,m}$ gives $\mathfrak{a}_{x}$:
$\lim_{m\rightarrow\infty} ( \Updelta/\uptheta_{0}^{m} )\Uplambda_{x,m}  = \mathfrak{a}_{x}$
This is the exact analog of the renormalization scheme used in the function field setting \cite{DGIII}; it is shown in the course of the proof of Theorem \ref{Pink} above that appears in
 \cite{GLL}.  
This implies the convergence 
\[   \frac{\Updelta}{\uptheta_{0}^{m} } \sin_{x,m} \left(\frac{\uptheta_{0}^{m} }{\Updelta} z\right)   = z\prod_{ 0<\upalpha\in \Uplambda_{x,m} }  \left (1-\frac{z^{2}}{ ( \Updelta \uptheta_{0}^{-m}\upalpha)^{2} }\right) \longrightarrow
\sin_{\mathfrak{a}_{x}} (z),\]
where $\sin_{x,m} (z)$, $\sin_{\mathfrak{a}_{x}} (z)$ are the un-normalized sine functions of $\Uplambda_{x,m}$,  $\mathfrak{a}_{x}$.  Taking derivatives of both sides gives 
a similar result for the cosines:
\begin{align}\label{cosconv}  \cos_{x,m} \left(\frac{\uptheta_{0}^{m} }{\Updelta} z\right) \longrightarrow
\cos_{\mathfrak{a}_{x}} (z).   \end{align}
  In particular, we have that if $\Upomega_{x,m} $ is the zero set of $\cos_{x,m}(z)$, then (\ref{cosconv})  implies 
$   \lim_{m\rightarrow\infty} (\Updelta/\uptheta_{0}^{m} )\Upomega_{x,m} = \Upomega_{\mathfrak{a}_{x}} $
 where $\Upomega_{\mathfrak{a}_{x}}$ is the zero set of $\cos_{\mathfrak{a}_{x}}(z)$.
 Therefore,
 \[ \frac{\uppi_{x,m}}{\Updelta\uptheta_{0}^{-m}} = \frac{1}{\upbeta_{0} \Updelta\uptheta_{0}^{-m}}    \prod_{i=1}^{\infty} \frac{ (\Updelta\uptheta_{0}^{-m}\upalpha_{i})^{2}}{ (\Updelta\uptheta_{0}^{-m}\upbeta_{i} )(\Updelta\uptheta_{0}^{-m}\upbeta_{i+1})}\longrightarrow \uppi_{\mathfrak{a}_{x}}.  \]
 Thus, if we normalize by $\uppi_{x,m}$ we get 
 \[  s_{x,m}(z)   = z\prod_{ 0<\upalpha\in \Uplambda_{x,m} }  \left (1-\frac{z^{2}}{ ( \uppi_{x,m}\upalpha)^{2} }\right)  \sim z\prod_{ 0<\upalpha\in \Uplambda_{x,m} } 
  \left (1-\frac{z^{2}}{ ( \uppi_{\mathfrak{a}_{x}}\Updelta\uptheta_{0}^{-m}\upalpha)^{2} }\right) \longrightarrow s_{\mathfrak{a}_{x}} (z)\]
  with a similar convergence of cosines
$  c_{x,m}(z)\longrightarrow c_{\mathfrak{a}_{x}}(z) $.
  This proves the result.
\end{proof}

We now have all that we need to state a conjecture generalizing the Main Theorem of this paper.   Denote $\uptheta_{0}\in \mathcal{O}_{K} $ a fundamental unit and let  $\uptheta=\uptheta_{0}^{n}$ be a power with norm $1$.
We are interested in the order $\mathcal{O}_{\uptheta} = \Z [ \uptheta, \uptheta^{-1}]$.  Let 
$A_{\uptheta} = A_{\upsigma_{1}} \cap \mathcal{O}_{\uptheta} $
be the associated ``quasicrystal order''.
The material presented in the above paragraphs generalizes in a straightforward way to structures defined over the order $A_{\uptheta}$: in particular, we may speak of ``Drinfeld modules'' over $A_{\uptheta}$, 
exponentials and solenoids associated to quasicrystal ideals $\mathfrak{a}\subset A_{\uptheta}$, and so on.

We consider the family of ideals 
\[ \mathcal{M}_{\uptheta} = \bigg\{ \mathfrak{M}_{\uptheta} \subset \mathcal{O}_{\uptheta} \; :\;\;   \uptheta\equiv 1\mod \mathfrak{M}_{\uptheta} \bigg\} . \]  
As before, if ${\rm id}\not=\varphi\in {\rm Gal}( K/\Q )$, then $\mathfrak{M}_{\uptheta}\in  \mathcal{M}_{\uptheta}$ $\Leftrightarrow $ $\mathfrak{M}^{\varphi}_{\uptheta}\in  \mathcal{M}_{\uptheta}$.
Write $\mathcal{M}= \bigcup_{N(\uptheta )=1} \mathcal{M}_{\uptheta} $.
The existence of the unit ray class field $K_{\mathfrak{M}_{\uptheta}}$ follows as before by class field theory, and the family 
$ \mathcal{K}_{\rm unit} = \{ K_{\mathfrak{M}_{\uptheta}}  :\; \mathfrak{M}_{\uptheta}\in  \mathcal{M}\}$
is again cofinal.   We may
write
$ K_{\mathfrak{M}_{\uptheta}} =K_{\mathfrak{M}_{\uptheta},1}\cdot K_{\mathfrak{M}_{\uptheta},2} $,
where $K_{\mathfrak{M}_{\uptheta},i}$, $i=1,2$, means narrowness along the corresponding place $\upsigma_{i}$ only.

Consider the contraction
$ \mathfrak{m}_{\uptheta}= \mathfrak{M}_{\uptheta}\cap A_{\uptheta} \subset A_{\uptheta}$,
which is a quasicrystal ideal in the sense described above.
For each quasicrystal ideal $\mathfrak{a}_{x}$, the multiplicative action of $\mathfrak{m}_{\uptheta}\subset A_{\uptheta}$ on the associated solenoid $\hat{\SI}_{\mathfrak{a}_{x}}$ was described in the previous
subsection.   We say that $t\in \hat{\SI}_{\mathfrak{a}_{x}}$
is an $\mathfrak{m}_{\uptheta}$ torsion point if for all $\upalpha\in \mathfrak{m}_{\uptheta}$, 
$ \upalpha\cdot t \in \hat{ \mathfrak{a}}_{x}$. 
The set of $\mathfrak{m}_{\uptheta}$ torsion points defines an {\it almost} $A_{\uptheta}$ module: see \S 6 of \cite{GLL} for precise definitions.  We denote this almost module by
$  \hat{\SI}_{\mathfrak{a}_{x}} [\mathfrak{m}_{\uptheta}]$.  

In the positive characteristic case, it was necessary to normalize the quantum exponential in order that its multipoints form a Galois orbit, see \S \ref{QDMSection}.  The normalization consists essentially of replacing the transcendental factor $\upxi_{i}$ of the Hayes module $\uprho_{i}$ by $\upxi_{0}$
and normalizing by the derivative $D_{i}$ of $\Upphi_{i}$. 
In characteristic zero, the analog of $D_{i}$  would be some kind of product integral
\begin{align}\label{CantorProductDx}  D_{x} = \Prodi_{0\not=\upalpha\in \widehat{ \mathfrak{a}_{x}/\mathfrak{a}_{0}}} e_{0}(\uppi_{\mathfrak{a}_{0}} \upalpha )^{d\upnu} , \end{align}
where $d\upnu$ is a  measure on
the quasicrystal quotient
\[ \widehat{ \mathfrak{a}_{x}/\mathfrak{a}_{0}}:= \overline{ \{ r + \mathfrak{a}_{0} \; : \;\; r\in  \mathfrak{a}_{x} \} } : \]
which is, like  $\hat{\mathfrak{a}}_{x}$, a Cantor set.   In particular, the status of the formula (\ref{CantorProductDx}) is conjectural, awaiting an appropriate definition of $d\upnu$.
This is very much in the spirit of the conjectured multiplicative measure $dm$ on $\widehat{[0,1)}$ needed to formulate the ``Cantor product'' occurring in 
 Main Conjecture 1, stated above.
Assuming the existence of this measure, which would define $D_{x}$, we
may renormalize the exponentials as in the positive characteristic case,  to obtain $\widetilde{e}_{\mathfrak{a}_{x}}$.   The following is the ray class field counterpart of Main Conjecture 1:


\begin{conj2}    
There exists an {\rm additive} measure $d\upmu$ on $\widehat{[0,1)}$ such that $K_{\mathfrak{M}_{\uptheta},1}$ is generated by
the averages of quantum torsion points:
\[    {\rm Tr}( \exp^{\rm qt} (t) ) := \int_{\widehat{[0,1)}} \widetilde{e}_{\mathfrak{a}_{x}} (t)  \; d\upmu (x)  \]

\end{conj2}

If this conjecture is verified, we may, following the discussion at the end of \S \ref{RayClassFieldSection}, obtain an explicit description of $K_{\mathfrak{M}_{\uptheta}}$.

\section*{Appendix: The Cyclotomic Element of the Conductor of $A_{f}$}

In what follows will denote
the fundamental unit $f_{0}$ simply as $f$, and the general power unit as $f^{k}$.  As in (\ref{minpolyoff}), the fundamental unit $f$ will be
normalized to satisfy
$f^{2} =  {\tt a} f + {\tt b} $, with $  {\tt a} \in \F_{q}[T]$ monic and $  {\tt b} \in \F_{q}^{\times}$.
The degree of $ {\tt a} $ will be denoted $d$ rather than $d_{0}$.  With this notation, 
$A_{f^{k}} = \F_{q}[ f^{k}, f^{k}T,\dots , f^{k}T^{dk-1}]$.

Consider the polynomial 
      \[   P_{k} (X) = X^{k-1} -  {\tt b} X^{k-2} +  {\tt b} ^{2}X^{k-3}  -\cdots + (-1)^{k-1}  {\tt b} ^{k-1}.  \]
     and denote by $\mathfrak{c}_{f^{k}}\subset A_{f^{k}}$ the conductor of $A_{f^{k}}\subset A_{\infty_{1}}$. Denote
$ \Uptheta_{k}:= P_{k}(f^{2} ) $.
      
      \begin{theo}\label{PhiInConductor}For all $k$, $ \Uptheta_{k}\in \mathfrak{c}_{f^{k}}$.      \end{theo}
      
      \begin{proof}  For $k=2$, clearly
     $  \Uptheta_{2} = f^{2} - {\tt b} \in A_{f^{2}}$.  Moreover, \[ f \Uptheta_{2} = f (f^{2}- {\tt b} )  =   f^{2} {\tt a} ,\;  fT \Uptheta_{2}=f^{2}T {\tt a}  ,\dots ,  fT^{d-1} \Uptheta_{2}=f^{2}T^{d-1} {\tt a}  \in A_{f^{2}}\]
     so $ \Uptheta_{2}\in\mathfrak{c}_{f^{k}}$.
     Similarly, 
     $  \Uptheta_{3} =$  $f^{4}- {\tt b} f^{2} + {\tt b} ^{2} =$ $ f^{2} ( {\tt a} f + {\tt b} ) - {\tt b} f^{2} + {\tt b} ^{2} =$ $  {\tt a} f^{3}+ {\tt b} ^{2} \in A_{f^{3}}$
and for $i=0,\dots, d-1$,      
$ f T^{i} \Uptheta_{3}   =$   $ T^{i} \big(  {\tt a}  ( {\tt a} f + {\tt b} ) f^{2}+f {\tt b} ^{2} \big) =$ $ T^{i} \big(  {\tt a} ^{2}f^{3}+  {\tt b}  {\tt a} f^{2}+f {\tt b} ^{2} \big) 
  =$ $   {\tt a} ^{2} f^{3}T^{i} +  {\tt b} f^{3}T^{i}  \in A_{f^{3}}$,
since $\deg( {\tt a} ^{2}T^{i})\leq 3d-1$.
In this connection we claim that
\[  f \Uptheta_{k} = \left\{ \begin{array}{ll}  
f^{k} \sum_{i=0}^{k/2-1} \binom{k-1-i}{i}  {\tt b} ^{i}  {\tt a} ^{k-1-2i} & \text{if $k$ is even} \\
\\
f^{k} \sum_{i=0}^{(k-1)/2} \binom{k-1-i}{i}  {\tt b} ^{i}  {\tt a} ^{k-1-2i} & \text{if $k$ is odd}
\end{array}
\right.   .\]
From this formula it follows that $f \Uptheta_{k} , \dots , fT^{d-1} \Uptheta_{k} \in A_{f^{k}}$.  This almost shows  that $ \Uptheta_{k}\in \mathfrak{c}_{f^{k}}$: we need 
$ \Uptheta_{k} \in A_{f^{k}}$ as well, which will be shown using a similar formula below. The proof will be an induction, alternating along the even and odd cases.  
Suppose first that $k$ is odd and assume that the formula is true at $k$: then  
we want to show 
\begin{align*}   f \Uptheta_{k+1}  & =f^{k+1} \sum^{\frac{k-1}{2}}_{i=0} \binom{k-i}{i}  {\tt b} ^{i}  {\tt a} ^{k-2i} \\
& =f^{k+1} \sum^{\frac{k-1}{2}}_{i=0} \binom{k-1-i}{i}  {\tt b} ^{i}  {\tt a} ^{k-2i} +f^{k+1}\sum^{\frac{k-1}{2}}_{i=0} \binom{k-1-i}{i-1}  {\tt b} ^{i}  {\tt a} ^{k-2i} \\
& \\
& =: \text{\ding{202}} +  \text{\ding{203}} 
 \end{align*} 
For $k$ odd, from the definition of $ \Uptheta_{k}$, we have
$  \Uptheta_{k+1} =f^{2} \Uptheta_{k} - {\tt b} ^{k}$.  Thus
\begin{align*}
f \Uptheta_{k+1} & = f^{2}\bigg[ f \Uptheta_{k}\bigg] -f {\tt b} ^{k}  = f^{2}\cdot f^{k} \sum^{\frac{k-1}{2}}_{i=0} \binom{k-1-i}{i}  {\tt b} ^{i}  {\tt a} ^{k-1-2i}  - {\tt b} ^{k}f \\
&=f^{k+1} \sum^{\frac{k-1}{2}}_{i=0} \binom{k-1-i}{i} {\tt b} ^{i}  {\tt a} ^{k-2i} + {\tt b} f^{k}  \sum^{\frac{k-1}{2}}_{i=0} \binom{k-1-i}{i} {\tt b} ^{i}  {\tt a} ^{k-1-2i} - {\tt b} ^{k}f \\
& = \text{\ding{202}}  + {\tt b} f^{k}  \sum^{\frac{k-1}{2}}_{i=0} \binom{k-1-i}{i} {\tt b} ^{i} {\tt a} ^{k-1-2i} - {\tt b} ^{k}f .
\end{align*}
The remaining sum above may be written
\begin{align*} f^{k}  \sum^{\frac{k+1}{2}}_{i=1} \binom{k-i}{i-1} {\tt b} ^{i} {\tt a} ^{k+1-2i} - {\tt b} ^{k}f 
& = f^{k-1} (f^{2}- {\tt b} )   \sum^{\frac{k-1}{2}}_{i=1} \binom{k-i}{i-1} {\tt b} ^{i} {\tt a} ^{k-2i}  + f^{k}  {\tt b} ^{\frac{k+1}{2}} - {\tt b} ^{k}f \\
& = \text{\ding{203}} + f^{k+1}   \sum^{\frac{k-1}{2}}_{i=1} \binom{k-1-i}{i-2} {\tt b} ^{i} {\tt a} ^{k-2i}  \\
&- f^{k-1}  \sum^{\frac{k-1}{2}}_{i=1} \binom{k-i}{i-1} {\tt b} ^{i+1} {\tt a} ^{k-2i} + f^{k}  {\tt b} ^{\frac{k+1}{2}}- {\tt b} ^{k}f .
 \end{align*}
 Therefore, for $k$ odd, we are reduced to showing that 
 \begin{align}\label{odd}
 & f^{k+1}   \sum^{\frac{k-1}{2}}_{i=1} \binom{k-1-i}{i-2} {\tt b} ^{i} {\tt a} ^{k-2i}  - f^{k-1}  \sum^{\frac{k-1}{2}}_{i=1} \binom{k-i}{i-1} {\tt b} ^{i+1} {\tt a} ^{k-2i}   \\
 &+ f^{k}  {\tt b} ^{\frac{k+1}{2}}- {\tt b} ^{k}f =0.  \nonumber
 \end{align}
 
Let us leave the case $k$ odd for the moment, and apply a similar analysis to the case when $k$ is even, where, once again, we assume the formula true.   
In this case, we want to show 
\begin{align*}   f \Uptheta_{k+1}  & =f^{k+1} \sum^{\frac{k}{2}}_{i=0} \binom{k-i}{i}  {\tt b} ^{i}  {\tt a} ^{k-2i} \\
& =f^{k+1} \sum^{\frac{k}{2}}_{i=0} \binom{k-1-i}{i}  {\tt b} ^{i}  {\tt a} ^{k-2i} +f^{k+1}\sum^{\frac{k}{2}}_{i=0} \binom{k-1-i}{i-1}  {\tt b} ^{i}  {\tt a} ^{k-2i} \\
& =f^{k+1} \sum^{\frac{k}{2}-1}_{i=0} \binom{k-1-i}{i}  {\tt b} ^{i}  {\tt a} ^{k-2i} +f^{k+1}\sum^{\frac{k}{2}}_{i=1} \binom{k-1-i}{i-1}  {\tt b} ^{i}  {\tt a} ^{k-2i} \\
& \\
& =: \text{\ding{202}} +  \text{\ding{203}} 
\end{align*}
By the recursion (for $k$ even)
$ \Uptheta_{k+1} =f^{2} \Uptheta_{k} + {\tt b} ^{k}$,
 we have
\begin{align*}
f \Uptheta_{k+1} & = f^{2}\bigg[ f \Uptheta_{k}\bigg] +f {\tt b} ^{k} = f^{2}\cdot f^{k} \sum^{\frac{k}{2}-1}_{i=0} \binom{k-1-i}{i}  {\tt b} ^{i}  {\tt a} ^{k-1-2i}  + {\tt b} ^{k}f \\
&=f^{k+1} \sum^{\frac{k}{2}-1}_{i=0} \binom{k-1-i}{i} {\tt b} ^{i}  {\tt a} ^{k-2i} + {\tt b} f^{k}  \sum^{\frac{k}{2}-1}_{i=0} \binom{k-1-i}{i} {\tt b} ^{i}  {\tt a} ^{k-1-2i} + {\tt b} ^{k}f \\
& = \text{\ding{202}}  + {\tt b} f^{k}  \sum^{\frac{k}{2}-1}_{i=0} \binom{k-1-i}{i} {\tt b} ^{i} {\tt a} ^{k-1-2i} + {\tt b} ^{k}f .
\end{align*}
The remaining terms in the last line above can be written
\begin{align*}
f^{k}  \sum^{\frac{k}{2}}_{i=1} \binom{k-i}{i-1} {\tt b} ^{i} {\tt a} ^{k +1-2i} + {\tt b} ^{k}f & = f^{k-1} (f^{2}- {\tt b} )  \sum^{\frac{k}{2}}_{i=1} \binom{k-i}{i-1} {\tt b} ^{i} {\tt a} ^{k -2i} + {\tt b} ^{k}f \\
& = \text{\ding{203}} + f^{k+1}  \sum^{\frac{k}{2}}_{i=1} \binom{k-1-i}{i-2} {\tt b} ^{i} {\tt a} ^{k -2i} \\
&\quad -f^{k-1}\sum^{\frac{k}{2}}_{i=1} \binom{k-i}{i-1} {\tt b} ^{i+1} {\tt a} ^{k -2i} + {\tt b} ^{k}f.
\end{align*}
So for $k$ even, we are reduced to showing
\begin{align}\label{even} f^{k+1}  \sum^{\frac{k}{2}}_{i=1} \binom{k-1-i}{i-2} {\tt b} ^{i} {\tt a} ^{k -2i}  -f^{k-1} \sum^{\frac{k}{2}}_{i=1} \binom{k-i}{i-1} {\tt b} ^{i+1} {\tt a} ^{k -2i} + {\tt b} ^{k}f=0. \end{align}

We now proceed to prove both equations (\ref{odd}) and (\ref{even}) by induction, assuming they are true for all lower values. That is, if we assume $k$ is even, we seek to show (\ref{even}), assuming it true for $k-2$  and assuming as well that  (\ref{odd}) holds for $k-1$, where the latter takes the form
\begin{align}\label{oddk-1} &  f^{k} \sum_{i=2}^{\frac{k}{2}-1}  \binom{k-2-i}{i-2} {\tt b} ^{i}  {\tt a} ^{k-1-2i}-f^{k-2}  \sum_{i=1}^{\frac{k}{2}-1}   \binom{k-1-i}{i-1} {\tt b} ^{i+1}  {\tt a} ^{k-1-2i}  \\
& + f^{k-1}  {\tt b} ^{\frac{k}{2}} -  {\tt b} ^{k-1}f =0 \nonumber \end{align}
Multiplying (\ref{oddk-1}) by $-f {\tt a} $ and adding to (\ref{even}) gives 
\begin{align*}  & f^{k+1}\sum_{i=2}^{\frac{k}{2}-1}   \binom{k-2-i}{i-3} {\tt b} ^{i}  {\tt a} ^{k-2i} -f^{k-1} \sum_{i=2}^{\frac{k}{2}-1}   \binom{k-1-i}{i-2} {\tt b} ^{i+1}  {\tt a} ^{k-2i} \\
 &+f^{k+1}\ \binom{\frac{k}{2}-1}{\frac{k}{2}-2}   {\tt b} ^{\frac{k}{2} } - f^{k-1}\binom{\frac{k}{2}}{\frac{k}{2}-1}   {\tt b} ^{\frac{k}{2} +1 } - f^{k} {\tt a}   {\tt b} ^{\frac{k}{2}} +  {\tt b} ^{k-1}f^{2}  {\tt a} + {\tt b} ^{k}f.
  \end{align*}
  Resolving the term $- f^{k} {\tt a}   {\tt b} ^{\frac{k}{2}}$ gives
$   -f^{k-1} (f^{2}- {\tt b} )  {\tt b} ^{\frac{k}{2}} = - f^{k+1} {\tt b} ^{\frac{k}{2}}+  f^{k-1}   {\tt b} ^{\frac{k}{2}+1}$.
  These can be combined with the isolated binomial terms, and re-incorporated in the $\Upsigma$ sums:
 \begin{align}\label{reincorp}  & f^{k+1}\sum_{i=2}^{\frac{k}{2}}   \binom{k-2-i}{i-3} {\tt b} ^{i}  {\tt a} ^{k-2i} -f^{k-1} \sum_{i=2}^{\frac{k}{2}}   \binom{k-1-i}{i-2} {\tt b} ^{i+1}  {\tt a} ^{k-2i}  \\
&  + {\tt b} ^{k-1}f^{2}  {\tt a}  + {\tt b} ^{k}f. \nonumber
  \end{align} 
  But the last two terms give 
  \begin{align}\label{resolutionoflast} f  {\tt b} ^{k-1} ( {\tt a} f +  {\tt b} )   = f^{3}  {\tt b} ^{k-1} . \end{align}
  Thus we may factor out $f^{2} {\tt b} $, and after re-indexing the sums we obtain $f^{2} {\tt b} $ times (\ref{even}) for the values $k+1$, $k-1$ replaced by $k-1$, $k-3$ etc.  By induction, this is zero,
  and we are done.  
  Now reverse the roles of even and odd.  We assume $k$ is odd and we want to show 
  (\ref{odd}), assuming it true for $k-2$  and assuming as well that  (\ref{even}) holds for $k-1$, where the latter takes the form
  \begin{align}\label{even-1} & f^{k}  \sum^{\frac{k-1}{2}}_{i=1} \binom{k-2-i}{i-2} {\tt b} ^{i} {\tt a} ^{k-1 -2i}  -f^{k-2} \sum^{\frac{k-1}{2}}_{i=1} \binom{k-1-i}{i-1} {\tt b} ^{i+1} {\tt a} ^{k -1-2i} \\ &+ {\tt b} ^{k-1}f=0.\nonumber \end{align}
  Adding $-f {\tt a} $ times (\ref{even-1}) to  (\ref{odd}) now gives
  \begin{align*}  & f^{k+1}\sum_{i=1}^{\frac{k-1}{2}} \binom{k-2-i}{i-3}  {\tt b} ^{i}  {\tt a} ^{k-2i} - f^{k-1}\sum_{i=1}^{\frac{k-1}{2}}\binom{k-i-1}{i-2} {\tt b} ^{i+1}  {\tt a} ^{k-2i} \\
  & + f^{k} {\tt b} ^{\frac{k+1}{2}} -f^{3} {\tt b} ^{k-1} \end{align*}
  where we have made use of (\ref{resolutionoflast}).  Factoring out $f^{2} {\tt b} $ as before gives (\ref{odd}) at the previous odd value, where it is true by the induction hypothesis.
  What remains is to show that $ \Uptheta_{k}\in A_{f^{k}}$.  In this regard, we claim that
\begin{align}\label{FormulaForPhi}   \Uptheta_{k} = f^{k} \sum_{i=0} ^{ [\frac{k}{2}] -1} \binom{k-2-i}{i}  {\tt b} ^{i}  {\tt a} ^{k-2-2i} + (- {\tt b} )^{k-1} ,\end{align}
from which it immediately follows that $ \Uptheta_{k}\in A_{f^{k}}$ for all $k$.  We again prove this
by an even-odd induction. Suppose the result is true for $k$ an even integer.  Then we want to prove (\ref{FormulaForPhi}) for $k+1$ i.e.\
\begin{align}\label{FormulaForPhikOdd}    \Uptheta_{k+1} = f^{k+1} \sum_{i=0} ^{ \frac{k}{2}-1} \binom{k-1-i}{i}  {\tt b} ^{i}  {\tt a} ^{k-1-2i} +  {\tt b} ^{k}  .  \end{align}
Now, the recursion $ \Uptheta_{k+1} = f^{2} \Uptheta_{k} + (-1)^{k+2} {\tt b} ^{k}$ gives the true formula
\begin{align}\label{trueformula}   \Uptheta_{k+1} & =  f^{k+2} \sum_{i=0} ^{ \frac{k}{2} -1} \binom{k-2-i}{i}  {\tt b} ^{i}  {\tt a} ^{k-2-2i}   -f^{2}  {\tt b} ^{k-1} +  {\tt b} ^{k}   ;\end{align}
we show that this is equivalent to (\ref{FormulaForPhikOdd}).  First, we rewrite (\ref{FormulaForPhikOdd}) as 
\begin{align*}
  \Uptheta_{k+1} & = f^{k} \cdot  {\tt a} f \sum_{i=0}^{\frac{k}{2}-1} \binom{k-1-i}{i}  {\tt b} ^{i}  {\tt a} ^{k-2-2i} + {\tt b} ^{k} \\
 & =  f^{k} \cdot (f^{2}- {\tt b} ) \sum_{i=0}^{\frac{k}{2}-1} \binom{k-1-i}{i}  {\tt b} ^{i}  {\tt a} ^{k-2-2i} + {\tt b} ^{k} \\
 & = f^{k+2}\sum_{i=0}^{\frac{k}{2}-1} \left\{  \binom{k-2-i}{i}+ \binom{k-2-i}{i-1} \right\}  {\tt b} ^{i} {\tt a} ^{k-2-2i} \\
 & \;\;- f^{k} \sum_{i=0}^{\frac{k}{2}-1} \binom{k-1-i}{i} {\tt b} ^{i+1}  {\tt a} ^{k-2-2i} + {\tt b} ^{k} \\
 &=(\ref{trueformula}) + f^{k+2} \sum_{i=0}^{\frac{k}{2}-1}\binom{k-2-i}{i-1} {\tt b} ^{i} {\tt a} ^{k-2-2i} \\
 & \;\;- f^{k} \sum_{i=0}^{\frac{k}{2}-1} \binom{k-1-i}{i} {\tt b} ^{i+1}  {\tt a} ^{k-2-2i} +f^{2}  {\tt b} ^{k-1} 
\end{align*}
Multiplying equation (\ref{even}) by $f {\tt b} ^{-1}$, we see that the last line is (\ref{trueformula}) $+$ 0, and we are done.
Now assume (\ref{FormulaForPhi}) is true for $k$ odd.  Then, 
we wish to deduce (\ref{FormulaForPhi}) for $k+1$, which takes the form
\begin{align}\label{FormulaForPhikEven}    \Uptheta_{k+1} = f^{k+1} \sum_{i=0} ^{ \frac{k-1}{2}} \binom{k-1-i}{i}  {\tt b} ^{i}  {\tt a} ^{k-1-2i} -  {\tt b} ^{k}  .  \end{align}
Again we use the recursion to obtain the true formula
\begin{align}\label{trueformulakodd}   \Uptheta_{k+1} & =  f^{k+2} \sum_{i=0} ^{ \frac{k-3}{2} } \binom{k-2-i}{i}  {\tt b} ^{i}  {\tt a} ^{k-2-2i}   +f^{2}  {\tt b} ^{k-1} - {\tt b} ^{k}   .\end{align}
We rewrite (\ref{FormulaForPhikEven})
as
\begin{align*}   \Uptheta_{k+1}  & =f^{k} (f^{2}- {\tt b} ) \sum_{i=0} ^{ \frac{k-3}{2} } \binom{k-1-i}{i}  {\tt b} ^{i}  {\tt a} ^{k-2-2i} + f^{k+1}  {\tt b} ^{\frac{k-1}{2}} - {\tt b} ^{k} \\
&= (\ref{trueformulakodd}) + f^{k+2} \sum_{i=0} ^{ \frac{k-3}{2} } \binom{k-2-i}{i-1}  {\tt b} ^{i}  {\tt a} ^{k-2-2i}  \\
& \;\; - f^{k}\sum_{i=0} ^{ \frac{k-3}{2} } \binom{k-1-i}{i}  {\tt b} ^{i+1}  {\tt a} ^{k-2-2i} + f^{k+1}  {\tt b} ^{\frac{k-1}{2}} -f^{2} {\tt b} ^{k-1}.\\
\end{align*}
Multiplying (\ref{odd}) by $f {\tt b} ^{-1}$, the last line is (\ref{trueformulakodd}), and we are done.
      \end{proof}


\begin{thebibliography}{00}
 \bibitem[1]{AM} Atiyah, M.F. \& Macdonald, I.G., {\it Introduction to Commutative Algebra}, Addison-Wesley, Reading, MA, 1969.
\bibitem[2]{Auer} Auer, R., Ray class fields of global function fields
with many rational places. {\it Acta Arith.} {\bf 95} (2000), no. 2, 97--122.
\bibitem [3]{BBG} Bellissard, Jean; Benedetti, Riccardo \& Gambaudo, Jean-Marc. Spaces of tilings, finite telescopic approximations and gap-labeling. {\it Commun. Math. Phys.} {\bf 261} 1--41 (2006).
\bibitem [4]{Ge-C}  Casta\~{n}o Bernard, C. \& Gendron, T.M., Modular invariant of quantum tori.   {\it Proc. Lond. Math. Soc.} {\bf 109} (2014), Issue 4, 1014--1049.
\bibitem[5]{cohn} Cohn, P.M., {\it Algebraic Numbers and Algebraic Functions.} Chapman and Hall/CRC, London, 1991.
\bibitem [6]{Conrad} Conrad, Keith, Ideal factorization. \\ https://kconrad.math.uconn.edu/blurbs/gradnumthy/idealfactor.pdf.
\bibitem [7]{Conrad1} Conrad, Keith, The Conductor Ideal of an Order. \\ https://kconrad.math.uconn.edu/blurbs/gradnumthy/conductor.pdf.
\bibitem [8] {DGI} Demangos, L. \& Gendron, T.M., Quantum $j$-Invariant in Positive Characteristic I: Definitions and Convergence. Arch. Math. {\bf 107} (1), 23--35 (2016).
\bibitem [8e]  {DGIe} Demangos, L. \& Gendron, T.M., Correction to: Quantum j-invariant in positive characteristic I: definition and convergence. Arch. Math. (2018).
\bibitem [9] {DGII} Demangos, L. \& Gendron, T.M., Quantum $j$-Invariant in Positive Characteristic II: Formulas and Values at the Quadratics. Arch. Math. {\bf 107} (2), 159--166 (2016).
\bibitem [10] {DGIII} Demangos, L. \& Gendron, T.M., Quantum modular invariant of Hilbert class fields of real quadratic global function fields. {\it Sel. Math. New Ser.} {\bf 27}, 13 (2021). https://doi.org/10.1007/s00029-021-00619-4.
\bibitem [11]{DGV} Demangos, L. \& Gendron, T.M., Modular invariant of rank 1 Drinfeld modules and class field generation. {\it J. Number Theory}
(Joint Special Issue: New Developments in the Theory of Modular Forms Over Function Fields: Conference in Pisa, 2018/p-adic Cohomology and Arithmetic Applications: conference in Banff, 2017) 
{\bf 237}, 40-66 (2022).
\bibitem [12]{DGVI} Demangos, L. \& Gendron, T.M.,  Explicit Class Field Theory for Orders in Global Function Fields. arXiv 5702923
\bibitem [13]{DGArXiv} Demangos, L. \& Gendron, T.M., Quantum Drinfeld Modules and Ray Class Fields of Real Quadratic Global Function Fields, arXiv 1709.05337v3. 
\bibitem [14]{Dollard} Dollard,  John, D.,{\it Product Integration with Application to Differential Equations.} Encyclopedia of Mathematics and its Applications, {\bf 10}, Cambridge University Press,
Cambridge, U.K., 1984.
\bibitem [15]{Drin} Drinfeld, V.G., Elliptic modules. Math Sbornik {\bf 94} (1974), 594--627. 
\bibitem [16]{Gek} Gekeler, Ernst-Ulrich,  Zur Arithmetik von Drinfeld Moduln, Math. Ann. {\bf 262} (1983), 167--182.
\bibitem [17]{GLL} Gendron, T.M., Leichtnam, E. \& Lochak, P., Modules de Drinfeld Quasicristallins,  arXiv 1912.12323.
\bibitem[18]{GilmerHeinzer} Gilmer, Robert \& Heinzer, William, On the number of generators of an invertible ideal.  {\it J. of Algebra} {\bf 14}, 139--151 (1970).
\bibitem [19]{Goss} Goss, D., {\it Basic structures of Function Field Arithmetic}, Springer-Verlag, Berlin, 1998.
\bibitem [20]{Hartshorne} Hartshorne, Robin, {\it Algebraic Geometry}, Springer-Verlag, New York, 1977.
\bibitem [21]{Hayes0} Hayes, David R., Explicit class field theory in global functions fields, in {\it Studies in Algebra and Number Theory}, Advances in Mathematics Supplementary Studies, Vol. 6, (ed. Gian-Carlo Rota), pp.   173 -- 217, Academic Press, New York, 1979.
\bibitem [22]{Hayes} Hayes, D., A brief introduction to Drinfeld modules, in {\it The Arithmetic of Function Fields},   (ed. D. Goss, D.R. Hayes \& M.I. Rosen),
Ohio State U. Mathematical Research Institute Publications {\bf 2}, pp. 313--402, Walter de Gruyter, Berlin, 1992.
\bibitem [23]{HKT} Hirschfeld, J.W.P., Korchm\'{a}ros, G., \& Torres, F., {\it Algebraic Curves over a Finite Field}, Princeton Series in Applied Mathematics, Princeton U. Press, Princeton, NJ, 2008.
\bibitem [24]{Lang} Lang, Serge, {\it Algebraic Number Theory}. Second Edition. Springer-Verlag, New York, 1994.
\bibitem [25]{Man} Manin, Yu., Real multiplication and noncommutative geometry, in {\it The Legacy of Niels Henrik Abel}, pp. 685--727,
Springer-Verlag, New York, 2004.
\bibitem[26]{Meyer} Meyer, Yves, Quasicrystals, Diophantine approximation and algebraic numbers. in {\it Beyond Quasicrystals}, pages 3--16, Springer-Verlag, Berlin, 1995.
\bibitem[27]{Moody} Moody, R., Meyer sets and their duals. in ``The Mathematics of Long-Range Order (Waterloo, ON, 1995)''.  NATO Advanced Science Institutes Series C: Mathematical
and Physical Sciences, {\bf 489}, pp. 403--441, Kluwer Academic Publishers, Dordrecht, 1997.
\bibitem [28]{Neu} Neukirch, J., {\it Algebraic Number Theory}. Grundlehren der mathematischen Wissenschaften {\bf 322}, Springer-Verlag, 1999.
\bibitem [29]{Per} Perret, M., Tours ramifi\'{e}es infinies de corps de classes, {\it J. Number Theory} {\bf 38}
(1991), 300--322.
\bibitem [30]{RamVal} Ramakrishnan, Dinakar \& Valenza, Robert J., {\it Fourier Analysis on Number Fields}.  Graduate Texts in Mathematics {\bf 186}. Springer-Verlang, Berlin, 1999.
\bibitem[31]{Ros} Rosen, M.I., The Hilbert class field in function fields.  {\it Expo. Math} {\bf 5} (1987), 365--378.
\bibitem[32]{Ros2} Rosen, M.I., {\it Number Theory in Function Fields}.  Graduate Texts in Mathematics {\bf 210}. Springer-Verlang, Berlin, 2002.
\bibitem[33]{Schapp} Schappacher, N., On the history of Hilbert's 12th problem. A comedy of errors. {\it S\'{e}minaires et Congr\`{e}} {\bf 3}, Soci\'{e}t\'{e}
Math\'{e}matique de France, 1998, 243--273.
\bibitem [34]{Sil} Silverman, {\it Advanced Topics in the Arithmetic of Elliptic Curves.} 
Graduate Texts in Mathematics {\bf 151}. Springer-Verlag, New York, 1994.
\bibitem [35]{Thak} Thakur, D.S., {\it Function Field Arithmetic}, World Scientific, Singapore, 2004.
\bibitem [36]{VS} Villa Salvador, Gabriel Daniel, {\it Topics in the Theory of Algebraic Function Fields}, Birkh\"{a}user, Boston, 2006.

\end{thebibliography}
\end{document}